\def\CC {{\mathbb C}}     %% complex numbers
\def\NN {{\mathbb N}}     %% natural numbers
\def\PP {{\mathbb P}}     %% projective
\def\QQ {{\mathbb Q}}     %% rationals
\def\RR {{\mathbb R}}     %% real numbers
\def\ZZ {{\mathbb Z}}     %% integers
\def\mc {\mathcal}
\def\mk {\mathfrak}
\def\ol  {\overline}
\def\tst {\Longleftrightarrow}
\newtheorem{theorem}{Theorem}[section]
\newtheorem{lemma}[theorem]{Lemma}
\newtheorem{prop}[theorem]{Proposition}
\newtheorem{coro}[theorem]{Corollary}
\newtheorem{rem}{Remark}[section]
\newtheorem{definition}{Definition}[section]
\newtheorem{example}{Example}[section]
\begin{document}

\title{On reductive subgroups of reductive groups\\ having invariants in almost all representations}
\author{Valdemar Tsanov, Yana Staneva \footnote{Both authors are supported by DFG grant SFB-TR191, ``Symplectic Structures in Geometry, Algebra and Dynamics''.}}

\maketitle

\begin{abstract}
Let $G$ and $\tilde G$ be connected complex reductive Lie groups, $G$ semisimple. Let $\Lambda^+$ be the monoid of dominant weights for a positive root system $\Delta^+$, and let $l(w)$ be the length of a Weyl group element $w$. Let $V_\lambda$ denote an irreducible $G$-module of highest weight $\lambda\in\Lambda^+$. For any closed embedding $\iota:\tilde G\subset G$, we consider\\

Property (A): $\quad\forall\lambda\in\Lambda^+,\exists q\in\mathbb{N}$ such that $V_{q\lambda}^{\tilde G}\ne0$.\\

A necessary condition for (A) is for $G$ to have no simple factors to which $G$ projects surjectively. We show that this condition is sufficient if $\tilde G$ is of type ${\bf A}_1$ or ${\bf E}_8$.

We define and study an integral invariant of a root system, $\ell_G=\min\{\ell^\lambda:\lambda\in\Lambda^+\setminus\{0\}\}$, where $\ell^\lambda=\min\{l(w):w\lambda\notin{\rm Cone}(\Delta^+)\}$. We derive the following sufficient condition for (A), independent of $\iota$: 
$$
\ell_G - \#\tilde\Delta^+ > 0 \;\Longrightarrow\; (A).
$$
We compute $\ell_G$ and related data for all simple $G$, except ${\bf E}_8$, where we obtain lower and upper bounds. We consider a stronger property (A-$k$) defined in terms of Geometric Invariant Theory, related to extreme values of codimensions of unstable loci, and derive a sufficient condition in the form $\ell_G - \#\tilde\Delta^+ > k$. The invariant $\ell_G$ proves too week to handle $G=SL_n$ and we employ a companion $\ell_G^{\rm sd}$ to infer (A-$k$) for a larger class of subgroups. We derive corollaries on Mori-theoretic properties of GIT-quotients.
\end{abstract}

%\newpage

{\small
\tableofcontents
}

\section{Introduction}\label{intro}

We study embeddings $\iota:\tilde G\subset G$ of connected reductive complex Lie groups, specifically, structural properties of the root systems with applications to Geometric Invariant Theory and branching laws for representations. By the Cartan-Weyl theorem, if $H\subset B\subset G$ is a pair of Cartan and Borel subgroups of $G$, the irreducible representations of $G$ are parametrized by the set $\Lambda^+$ of $B$-dominant weights in the weight lattice $\Lambda$ of $H$. We denote by $V_\lambda$ an exemplary irreducible $G$-module with highest weight $\lambda\in\Lambda^+$ and define
$$
LR_0(\iota)=\{\lambda\in\Lambda^+:V_\lambda^{\tilde G}\ne 0\} \;,\;\; LR(\iota)=\{(\tilde\lambda,\lambda)\in\tilde\Lambda^+\times\Lambda^+: (\tilde V^*_{\tilde\lambda}\otimes V_\lambda)^{\tilde G}\ne 0 \} = LR_0({\rm id}\times \iota) \;.
$$
These are known to be finitely generated monoids by a theorem of Brion and Knop, cf. \cite{Elashvili-92}. Since $(\tilde V^*_{\tilde\lambda}\otimes V_\lambda)^{\tilde G}\cong {\rm Hom}_{\tilde G}(\tilde V_{\tilde\lambda},V_\lambda)$, the monoid $LR(\iota)$ describes the irreducible representations of $\tilde G$ occurring in the restriction of a given irreducible representation of $G$.  The rational polyhedral convex cones spanned by these monoids in the respective Weyl chambers over the reals are denoted by
$$
\mc{LR}_0(\iota)={\rm Cone}(LR_0(\iota))\subset\Lambda_\RR^+\;,\;\;\mc{LR}(\iota)={\rm Cone}(LR(\iota))=\mc{LR}_0({\rm id}\times\iota)\subset \tilde\Lambda_\RR^+\times\Lambda_\RR^+\;,
$$
where $\Lambda_\RR^+={\rm Cone}(\Lambda^+)\subset\Lambda_\RR=\Lambda\otimes_\ZZ\RR$. $\mc{LR}(\iota)$ is known as the generalized Littlewood-Richardson cone, or also eigencone, for the embedding $\iota$. We call $\mc{LR}_0$ the $0$-eigencone. These cones admit several important interpretations in various contexts. We recall further on the interpretation in terms of Geometric Invariant Theory (GIT) applied to subgroup actions on flag varieties, or alternatively, in terms of momentum maps and symplectic quotients applied to projections of coadjoint orbits of compact Lie algebras to subalgebras. 

A description of $\mc{LR}(\iota)$ by an irredundant set of inequalities was obtained by Ressayre, \cite{Ress-Eigencone}. The method is indeed based on the identification $\mc{LR}(\iota)=\mc{LR}_0({\rm id}\times\iota)$ and a description of the more general $\mc{LR}_0$ for arbitrary embeddings. We observe at this point a difference in behaviour between eigencones and $0$-eigencones, which, as we show, characterizes the former among the latter, under some assumptions on the groups. Namely, the list of inequalities for $\mc{LR}(\iota)$ is never trivial, as $\mc{LR}(\iota)\cap(\tilde\Lambda_\RR^+\times\{0\})=\{(0,0)\}$, while certain $0$-eigencones are known to fill the Weyl chamber $\Lambda_\RR^+$. This property has several interesting interpretations, and its presence reduces the difficulty of some invariant theoretic problems. For instance, the saturation problem for $LR_0(\iota)$ is reduced to finding the saturation coefficients $q$ for the fundamental weights of $G$ and taking the least common multiple. Thus, characterizing such embeddings presents an interesting problem.\\

\noindent{\bf Property (A)} is said to hold for an embedding $\iota:\tilde G\subset G$, if $\mc{LR}_0(\iota)=\Lambda_\RR^+$. Equivalently, there exists $q\in\NN$ such that $V_{q\lambda}^{\tilde G}\ne 0$ for all $\lambda\in\Lambda^+$.\\

Our initial motivation came from the observation that this property holds for principal $SL_2$-subgroups of simple groups of rank greater than 1, cf. \cite{Sepp-Tsan-Princ}. Consequently, property (A) characterizes the eigencones among the $0$-eigencones, for principal $SL_2$-subgroups of simply connected semisimple groups. We are now able to show that this holds for all $SL_2$-subgroups. This fact is maybe known to experts, or not very surprising, though we are unaware of a clear formulation in the literature. The case is considered by Berenstein-Sjamaar, \cite{Beren-Sjam}, where a specific but redundant list of inequalities for $\mc{LR}_0(\iota)$ for $SL_2$-subgroups is given. We obtain a single inequality for each $SL_2$-factor of $G$ to which $\tilde G$ projects nontrivially. The presence of such a factor allows, upon writing $G=SL_2\times G_1$, to interpret the $0$-eigencone as an eigencone, $\mc{LR}_0(\iota)=\mc{LR}({\rm id}\times\iota_1)$. More importantly, we are interested in a proof using only general structural properties of the groups, and valid for a larger class of subgroups.

In a nutshell, the idea is to combine the inequalities describing $\mc{LR}_0(\iota)$ into a single inequality $\ell_G>\dim \tilde G/\tilde B$ implying $\mc{LR}_0(\iota)=\Lambda_\RR^+$. Here $\ell_G$ is a suitable numerical invariant of $G$ defined below. The resulting sufficient condition for (A) is not necessary, but is independent of $\iota$, derived only from the isomorphism types of the Lie algebras $\mk g$ and $\tilde{\mk g}$. While the derivation of this sufficient condition is straightforward, given the known results, we contribute a precise combinatorial definition for $\ell_{G}$, and a companion $\ell_G^{\rm sd}$, and compute their values for all types of semisimple groups, except ${\bf E}_8$, for which we only obtain lower and upper bounds. This allows us to derive several corollaries. In particular, our condition admits some optimal properties, as it becomes necessary for some cases of $\tilde G$. Our results indicate that property (A) holds for all ``sufficiently small'' subgroups of $G$, and we are able to prove this, save for the case where $G$ has simple factors of type ${\bf A}_n$, where we need a supplementary assumption.

We apply a GIT method developed in \cite{Sepp-Tsan-2020}, allowing to consider an even stronger property (A-k), defined in the next section, imposing an analogue of a lower bound $k\leq\dim V_{q\lambda}^{\tilde G}$ for some $q\in \NN$. The case $k=2$ is of special importance, as it implies strong Mori-theoretic properties for the GIT quotients of the flag variety $G/B$ by $\tilde G$, yielding information on the total invariant ring ${\rm Cox}(G/B)^{\tilde G}\cong\oplus_\lambda V_\lambda^{\tilde G}$. Our sufficient conditions for (A-k) are formulated in Theorem \ref{Theo tildeDelta ellK A}. As a corollary, we show that all proper ${\bf E}_8$-subgroups of simple groups satisfy property (A-2). Consequently, for $\tilde{\mk g}\cong\mk e_8$ property (A-2) characterizes the eigencones among the $0$-eigencones.

Let us now define the aforementioned invariant of $G$. Let $\Delta=\Delta^+\sqcup\Delta^-$ be the root system of $G$ split into positive and negative part by the choice of $\Lambda^+$. Let $W$ be the Weyl group and $l(w)=\Delta^+\cap w^{-1}\Delta^-$ denote the length of $w\in W$. Let $w_0\in W$ denote the longest element, $l(w_0)=\#\Delta^+$. For $\lambda\in\Lambda^+$, the weight $\lambda^*=-w_0\lambda$ is called the dual to $\lambda$, as $V_\lambda^*\cong V_{\lambda^*}$. We denote by $\Lambda^+_{\rm sd}=\{\lambda\in\Lambda^+:\lambda=\lambda^*\}$ the set of self-dual dominant weights. We set:
\begin{gather}\label{For Def ellK}
\begin{array}{l}
\ell_G=\ell_\Delta=\min\{l(w): w\in W, w\Lambda^+ \nsubseteq {\rm Cone}(\Delta^+)\}\;, \\
\ell^{\rm sd}_G=\ell^{\rm sd}_\Delta=\min\{l(w): w\in W, w\Lambda_{\rm sd}^+ \nsubseteq {\rm Cone}(\Delta^+)\}\;.
\end{array}
\end{gather}
It is easy to show that the value of the invariant $\ell_\Delta$ for a given root system is $\ell_\Delta=\min\{\ell_{\Delta_1},...,\ell_{\Delta_m}\}$, where $\Delta=\Delta_1\sqcup\dots\sqcup\Delta_m$ is the decomposition into simple components. The same holds for $\ell^{\rm sd}_\Delta$.

The following is a particular case of Theorem \ref{Theo tildeDelta ellK A}, which handles the more general property (A-$k$), for which the sufficient condition has the form $\ell_\Delta-\#\tilde\Delta^+\geq k$.

\begin{theorem}\label{Theo Intro Main}
Let $G$ and $\tilde G$ be complex reductive groups, $G$ semisimple. Then the following hold.
\begin{enumerate}
\item[{\rm (i)}] If $\ell_\Delta>\#\tilde\Delta^+$, then property (A) holds for any embedding $\iota:\tilde G\subset G$.
\item[{\rm (ii)}] If $\tilde w_0=-1$ and $\ell_\Delta^{\rm sd}>\#\tilde\Delta^+$, then property (A) holds for any embedding $\iota:\tilde G\subset G$.
\end{enumerate}
\end{theorem}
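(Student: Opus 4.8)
The plan is to reduce Property (A) to a statement about non-vanishing of $\tilde G$-invariants inside $V_\lambda$ for a fixed (large) multiple of an arbitrary fundamental weight, and then to control that non-vanishing via a dimension/codimension count on the flag variety $G/B$. Concretely, $\mathcal{LR}_0(\iota)=\Lambda_\RR^+$ holds as soon as $V_{q\varpi_i}^{\tilde G}\neq 0$ for each fundamental weight $\varpi_i$ and some $q\in\NN$, since the $\varpi_i$ generate the cone $\Lambda_\RR^+$ and $LR_0(\iota)$ is a finitely generated monoid; so it suffices to treat a single arbitrary $\lambda\in\Lambda^+\setminus\{0\}$ and produce $q$ with $V_{q\lambda}^{\tilde G}\neq 0$. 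The standard reformulation is that $V_{q\lambda}^{\tilde G}\neq 0$ for some $q$ iff the $\tilde G$-action on $G/B$ has a semistable point for the line bundle $\mathcal{L}_\lambda$, i.e. iff the unstable locus $(G/B)^{\rm us}_\lambda$ is a proper closed subset.

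The key geometric step is to bound the codimension of the unstable locus from below using the Hilbert–Mumford numerical criterion: the unstable locus is a finite union of ``strata'' associated to one-parameter subgroups $\tau$ of $\tilde G$, and each stratum is contained in the $\tilde G$-sweep of a Białynicki-Birula–type contracting cell in $G/B$ cut out by $\tau$. The dimension of such a $\tilde G$-sweep is at most $\dim\tilde G/\tilde B$ plus the dimension of the relevant $B$-stable Schubert-type cell in $G/B$, and the point of the invariant $\ell_\Delta$ is precisely that, as long as $w\lambda$ stays inside ${\rm Cone}(\Delta^+)$ for all $w$ of length $< \ell_\Delta$, the destabilizing $\tau$ must ``act with the wrong sign'' on a cell of codimension at least $\ell_\Delta$ in $G/B$. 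Combining these, $\mathrm{codim}\,(G/B)^{\rm us}_\lambda \geq \ell_\Delta - \#\tilde\Delta^+ > 0$, so the unstable locus is proper, giving a semistable point and hence $V_{q\lambda}^{\tilde G}\neq 0$. For part (ii), the hypothesis $\tilde w_0=-1$ forces every $\tilde G$-invariant line in the relevant tensor constructions to survive self-duality, which lets us replace the role of a general $\lambda$ by a self-dual one: given arbitrary $\lambda$, one works with $\lambda+\lambda^*\in\Lambda^+_{\rm sd}$ (whose multiples still generate $\Lambda_\RR^+$ since $\lambda^*$ runs over the fundamental weights as $\lambda$ does), and one reruns the codimension estimate with $\ell_\Delta^{\rm sd}$ in place of $\ell_\Delta$; the condition $\tilde w_0 = -1$ ensures $\tilde V_{\tilde\mu}^*\cong\tilde V_{\tilde\mu}$, so the invariant-theoretic input is symmetric and the self-dual restriction is harmless.

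I would organize the write-up as: (1) reduce (A) to non-vanishing for multiples of fundamental weights via finite generation of $LR_0(\iota)$; (2) restate via GIT semistability on $G/B$ and the Hilbert–Mumford criterion, recalling the Kempf–Ness / Kirwan–Ness stratification of the unstable locus indexed by one-parameter subgroups of $\tilde G$; (3) prove the codimension bound $\mathrm{codim}\,(G/B)^{\rm us}_\lambda\geq\ell_\Delta-\#\tilde\Delta^+$ by estimating the dimension of each $\tilde G$-sweep of a $\tau$-contracting cell — this is where the definition of $\ell_\Delta$ does the work, comparing $l(w)$ with the codimension of the cell on which $w\lambda$ first leaves ${\rm Cone}(\Delta^+)$; (4) conclude properness of the unstable locus, hence semistability, hence (i); (5) for (ii), pass to self-dual weights using $\tilde w_0=-1$ and repeat with $\ell_\Delta^{\rm sd}$.

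The main obstacle I anticipate is step (3): making precise the inequality relating $l(\tau)$-type data of a destabilizing one-parameter subgroup of $\tilde G$ (as a cocharacter of $G$ via $\iota$) to the codimension in $G/B$ of the fixed/contracting locus it cuts out, and checking that the $\tilde G$-orbit sweep inflates the dimension by at most $\#\tilde\Delta^+$. This requires carefully tracking how a $1$-PS of $\tilde G$ sits inside the Weyl-chamber geometry of $G$ — choosing a dominant representative, relating its pairing with $\lambda$ to membership of some $w\lambda$ in ${\rm Cone}(\Delta^+)$, and ensuring the ``first escape length'' is genuinely bounded below by $\ell_\Delta$. The rest is either standard GIT (step 2), elementary convexity/monoid arguments (steps 1 and the self-dual reduction in 5), or bookkeeping.
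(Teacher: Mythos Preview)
Your outline for part (i) is essentially the paper's proof: the unstable locus in $G/B$ for $\mc L_\lambda$ is covered by $\tilde G$-sweeps of Schubert-type cells attached to destabilizing one-parameter subgroups $h\in\tilde\Gamma$, each such sweep has codimension at least ${\rm cod}_h(w)-\#\tilde\Delta^+$, and the definition of $\ell_\Delta$ forces ${\rm cod}_h(w)\geq\ell_\Delta$ for every $(h,w)$ with $\langle w\sigma_h\lambda,h\rangle>0$. The paper packages this via the fit pairs $\mk{FR}$ and the cited formula for ${\rm cod}_{\tilde G}(\lambda)$, but the content is what you wrote.

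Part (ii), however, has a genuine gap. You propose to replace an arbitrary $\lambda$ by $\lambda+\lambda^*\in\Lambda^+_{\rm sd}$ and then rerun the argument with $\ell_\Delta^{\rm sd}$. This does not work: the elements $\varpi_j+\varpi_j^*$ span only the self-dual subcone $(\Lambda^+_{\rm sd})_\RR$, not $\Lambda^+_\RR$, so establishing property (A) for self-dual weights does not yield it for all weights. Nor is there an obvious implication $V_{q(\lambda+\lambda^*)}^{\tilde G}\ne 0\Rightarrow V_{q'\lambda}^{\tilde G}\ne 0$. The hypothesis $\tilde w_0=-1$ enters on the \emph{other} side of the Hilbert--Mumford pairing $\langle w\lambda,h\rangle$: it constrains the destabilizing one-parameter subgroups, not the weights. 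Concretely, $\tilde w_0=-1$ means every $h\in\tilde\Gamma^+$ satisfies $-\tilde w_0 h=h$, i.e.\ is self-dual in $\tilde\Gamma$; for compatible Weyl chambers this forces $h$ to be self-dual in $\Gamma$ as well, so $\tilde\Gamma\subset\Gamma_{\rm sd}$ and in particular every Ressayre element lies in $\Gamma_{\rm sd}$. In the codimension estimate you may therefore restrict the minimum over destabilizing $h$ to self-dual $h$, and this is precisely what replaces $\ell_\Delta$ by $\ell_\Delta^{\rm sd}=\min\{\ell^h:h\in\Gamma^+_{\rm sd}\setminus\{0\}\}$ (the two descriptions of $\ell_\Delta^{\rm sd}$, via self-dual $\lambda$ in the introduction and via self-dual $h$ here, agree by the symmetry $\ell^-_h(\lambda)=\ell^-_\lambda(h)$). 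Keep $\lambda$ arbitrary and move the self-duality to $h$; then the rest of your argument goes through unchanged.
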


In view of the above result it is of interest to know the values of the two invariants defined above. These are given in the theorem below for simple root systems, except ${\bf E}_8$, for which we only obtain bounds. The value for an arbitrary root system is obtained as the minimum of the values for the simple constituents, as shown in Corollary \ref{Coro ellDeltaDual}. Let us note that $\ell_\Delta$ and $\ell_\Delta^{\rm sd}$ coincide for trivial reasons, whenever $w_0=-1$, i.e. for types ${\bf B}_n$, ${\bf C}_n$, ${\bf D}_{2n}$, ${\bf E}_7$, ${\bf E}_8$, ${\bf F}_4$, ${\bf G}_2$.

\begin{theorem}\label{Theo ellDelta Classification}
The values of $\ell_\Delta$ and $\ell^{\rm sd}_\Delta$ for the simple root systems are the following:\\

\begin{tabular}{|l|c|c|}
\hline
Type of $\Delta$ & $\ell_\Delta$ & $\ell_\Delta^{\rm sd}$ \\
\hline
${\bf A}_n$ & $1$ & $\lfloor\frac{n+2}{2}\rfloor$ \\
\hline
${\bf B}_n$ & $n$ &  \\
\hline
${\bf C}_n$ & $n$ &  \\
\hline
${\bf D}_n$, $n\geq 4$ & $n-1$, for $n\ne5$ & $n-1$ \\
            & $3$, for $n=5$ &  \\
\hline
${\bf E}_6$ & $5$ & $9$ \\
\hline
${\bf E}_7$ & $10$ &  \\
\hline
${\bf E}_8$ & $7\leq\ell_{{\bf E}_8}\leq29$ &  \\
\hline
${\bf F}_4$ & $8$ & \\
\hline
${\bf G}_2$ & $3$ & \\
\hline
\end{tabular}
\end{theorem}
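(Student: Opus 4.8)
The plan is to compute $\ell_\Delta$ and $\ell_\Delta^{\rm sd}$ type by type, reducing in each case to an explicit combinatorial optimization over the Weyl group. The starting point is the observation that $w\Lambda^+\subseteq{\rm Cone}(\Delta^+)$ iff $w\varpi_i\in{\rm Cone}(\Delta^+)$ for all fundamental weights $\varpi_i$, since $\Lambda^+$ is generated as a monoid by the $\varpi_i$ and ${\rm Cone}(\Delta^+)$ is a convex cone. Hence $\ell^\lambda$ depends only on whether each $w\varpi_i$ stays in the positive root cone, and $\ell_\Delta=\min_i\min\{l(w): w\varpi_i\notin{\rm Cone}(\Delta^+)\}$. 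Writing a weight $\mu$ in the basis of simple roots, $\mu\in{\rm Cone}(\Delta^+)$ forces all coefficients to be $\geq0$; the key elementary fact is that $\varpi_i$ itself has strictly positive coefficients on all simple roots (for simple $\Delta$), so one must apply enough simple reflections to drive some coefficient negative. For $w=s_{j_1}\cdots s_{j_r}$ short, applying reflections to $\varpi_i$ changes coefficients in a controlled way: $s_j\mu=\mu-\langle\mu,\alpha_j^\vee\rangle\alpha_j$, so a single reflection can only lower the $\alpha_j$-coefficient. One then has to find, for each type, the cheapest word producing a negative coefficient, and prove no shorter word does.

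For the concrete computations I would proceed as follows. \textbf{Type ${\bf A}_n$:} here $\varpi_1=\alpha_1+\cdots$ wait — better, $\varpi_1$ has support on all simple roots but $s_2\varpi_1$ already has a negative coefficient? Actually $\langle\varpi_1,\alpha_j^\vee\rangle=\delta_{1j}$, so $s_j\varpi_1=\varpi_1$ for $j\neq1$ and $s_1\varpi_1=\varpi_1-\alpha_1$; iterating along the chain, $s_k\cdots s_1\varpi_1=\varpi_1-(\alpha_1+\cdots+\alpha_k)$, which in the simple-root basis equals $\varpi_{k+1}-\varpi_k\cdots$ — the cleanest route is to use the standard-basis description $\varpi_i=e_1+\cdots+e_i$ and $W=S_{n+1}$: then $w\varpi_1=e_{w(1)}$, and $e_j\in{\rm Cone}(\Delta^+)$ iff $j=1$, so moving index $1$ to position $2$ by the single transposition $s_1$ already escapes the cone, giving $\ell_{{\bf A}_n}=1$. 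For $\ell_{{\bf A}_n}^{\rm sd}$ the self-dual weights are spanned by $\varpi_i+\varpi_{n+1-i}$, represented by sums of the form $(e_1+\cdots+e_i)+(e_1+\cdots+e_{n+1-i})$, and one checks that keeping such a symmetric sum inside the positive cone $\{$coordinates weakly decreasing$\}$ — i.e. keeping it ``sorted'' — costs at least $\lfloor(n+2)/2\rfloor$ transpositions to disturb, an explicit sorting-distance estimate. \textbf{Types ${\bf B}_n,{\bf C}_n,{\bf D}_n$:} again use coordinate models; $\varpi_1=e_1$ for ${\bf B}_n,{\bf D}_n$ and one tracks the orbit under signed permutations, where now sign changes are available, and the answer $n$ (resp. $n-1$) emerges from the length of the shortest Weyl element moving $e_1$ to a position/sign making $\pm e_j$ leave the cone; the anomaly at ${\bf D}_5$ comes from the exceptional behaviour of the spin weights under triality-like symmetry of the $D_5$ diagram and must be handled separately by direct inspection of the two half-spin fundamental weights. \textbf{Exceptional types ${\bf E}_6,{\bf E}_7,{\bf F}_4,{\bf G}_2$:} here I would do a finite computation: enumerate, up to some length bound, the Weyl-group elements in shortlex order, apply them to each fundamental weight, expand in simple roots, and record the first length at which a negative coefficient appears; correctness of the claimed value needs a lower-bound argument showing no shorter word works, which follows from bounding how fast a coefficient can decrease per reflection (each $s_j$ decreases the $\alpha_j$-coefficient by $\langle\mu,\alpha_j^\vee\rangle$, itself bounded using the Cartan integers). \textbf{Type ${\bf E}_8$:} since $w_0=-1$ and the root system is large, we only bound: the upper bound $29$ comes from exhibiting one short word driving a fundamental weight out of the cone; the lower bound $7$ comes from showing that fewer than $7$ reflections cannot make any fundamental-weight coefficient negative, via the per-step decrease estimate together with the (large) initial coefficients of the $\varpi_i$ in the simple-root basis for ${\bf E}_8$.

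The self-dual values when $w_0=-1$ are immediate since then $\Lambda^+_{\rm sd}=\Lambda^+$ and the two invariants coincide by definition, covering ${\bf B}_n,{\bf C}_n,{\bf D}_{2n},{\bf E}_7,{\bf E}_8,{\bf F}_4,{\bf G}_2$; for ${\bf D}_{2n+1}$ and ${\bf A}_n$ and ${\bf E}_6$, where $w_0\neq-1$, the self-dual cone is a proper subcone and one repeats the optimization restricted to the sublattice fixed by $-w_0$ (the diagram involution), typically giving a strictly larger value, which is why $\ell^{\rm sd}_{{\bf A}_n}=\lfloor(n+2)/2\rfloor$, $\ell^{\rm sd}_{{\bf D}_n}=n-1$ (no exception at $n=5$, since the problematic short elements for $\ell_\Delta$ move the spin weights, which are not self-dual for odd $n$), and $\ell^{\rm sd}_{{\bf E}_6}=9$.

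The main obstacle I expect is twofold: first, establishing sharp \emph{lower} bounds — showing that no Weyl element shorter than the claimed value can push any (self-dual) fundamental weight out of ${\rm Cone}(\Delta^+)$ — since this is a statement quantified over all short words and requires either a clean monotonicity/positivity invariant preserved by short products of reflections or an exhaustive but carefully organized search; and second, the ${\bf D}_5$ anomaly and the ${\bf E}_8$ gap, where the coordinate models are less transparent and the estimates are coarser. For ${\bf E}_8$ in particular, closing the gap between $7$ and $29$ would require substantially finer analysis, which is why the theorem states only bounds there.
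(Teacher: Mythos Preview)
Your overall strategy --- reduce to fundamental weights, use coordinate realizations for the classical types, and finite (computer-assisted) enumeration for the exceptionals --- matches the paper's approach. The paper organizes the computation through the auxiliary function $\ell_h^-(\lambda)=\min\{l(w):\langle w\lambda,h\rangle<0\}$ and the symmetry $\ell_h^-(\lambda)=\ell_\lambda^-(h)$, which lets one freely swap the roles of the weight and the ``test direction''; this is implicitly what you are doing, but making it explicit streamlines several of the classical arguments, in particular for $\ell^{\rm sd}_\Delta$, where the paper minimizes $\ell^h$ over self-dual $h$ rather than testing self-dual $\lambda$ directly.

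There is one genuine gap: your proposed lower bound for ${\bf E}_8$. You suggest bounding the per-step decrease of a simple-root coefficient, using that $s_j$ subtracts $\langle\mu,\alpha_j^\vee\rangle\alpha_j$. The difficulty is that $\langle\mu,\alpha_j^\vee\rangle$ is not uniformly bounded along a $W$-trajectory --- it can and does grow as reflections are applied --- so a naive per-step estimate does not yield a usable lower bound, and it is not clear how one would extract $7$ from such an argument. The paper takes a different route: it observes that ${\bf E}_8$ contains a root subsystem of type ${\bf D}_8$ with Weyl chambers chosen so that $\Lambda^+_\RR({\bf E}_8)\subset\Lambda^+_\RR({\bf D}_8)$, which immediately gives $\ell_{{\bf E}_8}\geq\ell_{{\bf D}_8}=7$. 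For the upper bound the paper uses that $\varpi_8$ is the highest root, of height $29$: for a dominant root $\beta$ in a simply laced system one has $\ell^\beta={\rm hi}(\beta)$, hence $\ell_{{\bf E}_8}\leq\ell^{\varpi_8}=29$.

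A smaller slip: in your ${\bf A}_n$ discussion of $\ell^{\rm sd}$ you describe ${\rm Cone}(\Delta^+)$ as the set of vectors with weakly decreasing coordinates, but that is the Weyl chamber $\Lambda^+_\RR$; the positive-root cone for ${\bf A}_n$ is instead characterized by nonnegativity of all partial sums $x_1+\cdots+x_k$. The distinction matters for carrying out the actual estimate, though the claimed value $\lfloor(n+2)/2\rfloor$ is correct.
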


The proof is given in Section \ref{Sect Compute}, Theorems \ref{Theo Values ellDelta} and \ref{Theo Values ellsdDelta}. We use case by case analysis and combinatorics of Weyl groups. We derive detailed information on the values of the individual $\ell^\lambda=\min\{l(w):w\lambda\notin{\rm Cone}(\Delta^+)\}$ for various $\lambda\in\Lambda^+$, in particular the fundamental weights, where the value of $\ell_\Delta$ is attained.

Types ${\bf A}_1$ and ${\bf E}_8$ are exactly the simple types of $\tilde G$ for which the conditions of Theorem \ref{Theo Intro Main} apply for all proper embeddings into simple groups. We infer the following.

\begin{coro}
Suppose $\tilde G$ is of type ${\bf A}_1$ or ${\bf E}_8$ and $\iota:\tilde G\subset G$ is a closed embedding into a reductive $G$. Then property (A) holds for $\iota$ if and only if $G$ has no simple factors to which $\tilde G$ projects surjectively.
\end{coro}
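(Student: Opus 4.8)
The plan is to combine Theorem \ref{Theo Intro Main} and Theorem \ref{Theo ellDelta Classification} with the elementary necessary condition mentioned in the abstract. First I would dispose of the necessity direction: if $G$ has a simple factor $G_i$ to which $\tilde G$ projects surjectively, then writing $G = G_i \times G'$ and taking $\lambda$ to be any nonzero dominant weight supported on $G_i$, the module $V_{q\lambda}$ is an irreducible $G_i$-module on which $\tilde G$ acts through the surjection $\tilde G \twoheadrightarrow G_i$; hence $V_{q\lambda}^{\tilde G} = V_{q\lambda}^{G_i} = 0$ for all $q > 0$, so property (A) fails. This direction requires no hypothesis on the type of $\tilde G$.

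For sufficiency, suppose $G$ has no simple factor onto which $\tilde G$ projects surjectively. Write $G = G_1 \times \dots \times G_m$ as a product of simple factors, and correspondingly decompose a dominant weight of $G$ into its components. Since $\mathcal{LR}_0(\iota) = \Lambda_\RR^+$ is equivalent to the product statement over the factors — more precisely, $V_\mu^{\tilde G} \neq 0$ can be checked factor by factor once one knows invariants exist in each $V_{\mu_i}$ under the image of $\tilde G$, using that a tensor product of nonzero invariant vectors is a nonzero invariant vector — it suffices to prove property (A) for the embedding $\tilde G_i \subset G_i$, where $\tilde G_i = \iota(\tilde G)$ projected (and closed up) in $G_i$; note $\tilde G_i$ is a proper subgroup of $G_i$ by hypothesis, and it is still of type ${\bf A}_1$ or ${\bf E}_8$ (or a quotient/subgroup thereof — in the ${\bf A}_1$ case the image is either trivial, in which case (A) is immediate, or all of a principal-type $SL_2$ or smaller; in the ${\bf E}_8$ case any proper image is again handled below). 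So I reduce to the case $G$ simple and $\tilde G \subsetneq G$ a proper closed subgroup of type ${\bf A}_1$ or ${\bf E}_8$.

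Now apply the numerical criterion. If $\tilde{\mk g}$ is of type ${\bf A}_1$, then $\#\tilde\Delta^+ = 1$, so by Theorem \ref{Theo Intro Main}(i) it suffices to check $\ell_\Delta > 1$ for every simple $G$ that properly contains an ${\bf A}_1$ — which means every simple $G$ other than $SL_2$ itself; and indeed the table in Theorem \ref{Theo ellDelta Classification} gives $\ell_\Delta \geq 3$ for all simple types except ${\bf A}_n$, $n \geq 2$. For the ${\bf A}_n$ case, where $\ell_\Delta = 1$, I use Theorem \ref{Theo Intro Main}(ii): an ${\bf A}_1$-subgroup has $\tilde w_0 = -1$, and $\ell_{{\bf A}_n}^{\rm sd} = \lfloor (n+2)/2 \rfloor \geq 2 > 1 = \#\tilde\Delta^+$ for all $n \geq 2$, so (A) holds there too. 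If $\tilde{\mk g}$ is of type ${\bf E}_8$, then $\#\tilde\Delta^+ = 120$; since $\mk e_8$ embeds properly only into strictly larger simple groups, the only candidates are the exceptional types with enough positive roots and the classical types $SL_n$, $SO_n$, $Sp_n$ of large rank. For the classical $G$ one has $\ell_\Delta$ growing linearly in the rank (values $n$, $n$, $n-1$), which is dwarfed by $120$ for the small ranks at which $\mk e_8 \hookrightarrow \mk g$ is possible — so here part (i) does \emph{not} apply directly, and one must instead invoke part (ii): $\mk e_8$ has $\tilde w_0 = -1$, and one needs $\ell_\Delta^{\rm sd} > 120$. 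For $G$ of classical type this forces very large rank; I would check that the minimal-rank classical groups admitting an $\mk e_8$-subgroup do satisfy $\ell_\Delta^{\rm sd} > 120$ (for $SL_n$ this is $\lfloor(n+2)/2\rfloor > 120$, i.e. $n \geq 239$, and one must confirm the smallest faithful representation of $\mk e_8$ has dimension $\geq 240$ — it is $248$ — so this holds; for $SO_n, Sp_n$ analogously using $\ell^{\rm sd}_\Delta = n-1$ and the relevant minimal orthogonal/symplectic dimension). For $G$ of exceptional type the only proper overgroup issue is vacuous or handled by the explicit bound $\ell_{{\bf E}_8} \geq 7$ being too weak, so in fact I expect no exceptional $G$ properly contains $\mk e_8$ except via $\mk e_8$ itself, making that subcase empty.

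The main obstacle will be the ${\bf E}_8$-into-classical-$G$ bookkeeping: Theorem \ref{Theo Intro Main}(i) genuinely fails for the small-rank classical groups, so the argument must route through part (ii), and this requires pinning down the minimal dimension of a nontrivial $\mk e_8$-representation (namely $248$) and matching it against the self-dual invariants $\ell_\Delta^{\rm sd}$ of $SL_n$, $SO_n$, $Sp_n$. One must also be careful that the projection of an ${\bf A}_1$- or ${\bf E}_8$-subgroup to a simple factor, when nontrivial but not surjective, is again of a type to which the criterion applies — for ${\bf A}_1$ this is automatic (the image is $PSL_2$ or $SL_2$), while for ${\bf E}_8$ simplicity of $\mk e_8$ means any nontrivial image is all of $\mk e_8$, so the reduction to simple $G$ is clean.
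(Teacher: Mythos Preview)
Your proof is correct and follows essentially the same route as the paper, which handles the two types separately via Theorems~\ref{Theo SL2} and~\ref{Theo E8 all movable}: apply Theorem~\ref{Theo Intro Main}(ii) using $\tilde w_0=-1$ in both cases, and check $\ell_\Delta^{\rm sd}>\#\tilde\Delta^+$ for every simple $G$ that can properly contain the given $\tilde G$. The one structural difference is that you reduce explicitly to simple $G$ by a factor-by-factor tensor argument, whereas the paper applies the criterion directly to semisimple $G$ via the formula $\ell_\Delta^{\rm sd}=\min_i\ell_{\Delta_i}^{\rm sd}$ from Corollary~\ref{Coro ellDeltaDual}; your reduction is arguably cleaner, since it transparently disposes of simple factors to which $\tilde G$ projects trivially.

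A few minor slips worth tidying: the claim ``$\ell_\Delta\geq 3$ for all simple types except ${\bf A}_n$'' is off (e.g.\ $\ell_{{\bf B}_2}=2$), though $\ell_\Delta>1$ is what you need and that holds; your threshold ``$n\geq 239$'' for $\lfloor(n+2)/2\rfloor>120$ should be $n\geq 240$; and ``$\ell_\Delta^{\rm sd}=n-1$'' is only right for ${\bf D}_n$, not ${\bf B}_n,{\bf C}_n$ where it equals $n$. None of these affect the argument, since the smallest nontrivial $\mk e_8$-module has dimension $248$, forcing the rank of any classical host well above the required bound. The paper packages this last step as the uniform inequality $\ell_\Delta^{\rm sd}\geq\tfrac12\dim V-1$ for classical $G$ with natural module $V$, which gives $\ell_\Delta^{\rm sd}\geq 123>120$ in one stroke.
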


The two cases are handled separately, in Theorems \ref{Theo SL2} and \ref{Theo E8 all movable}, where also stronger properties in each case are deduced.

\begin{rem}\label{Rem SLn}
Let us observe that type ${\bf A}_n$, i.e., the case $G=SL_{n+1}$, exhibits a unique behaviour of the invariants $\ell_\Delta$ and $\ell_\Delta^{\rm sd}$. This case is in fact the main reason for the introduction of $\ell_\Delta^{\rm sd}$, since the value $\ell_{{\bf A}_n}=1$ makes part (i) of Theorem \ref{Theo tildeDelta ellK A} applicable only if $\tilde G$ is a torus. Our initial goal was to handle and generalize the case of $\tilde G= SL_2$. Part (i) implies property (A) for $SL_2$-subgroups in $G$ of all simple types except ${\bf A}_n$.

Let us also note that there is room for refinement and modification of the proposed methods, taking, as it may, further properties of the embedding into account. For instance, if the subgroup $\tilde G$ is assumed to be semisimple, then one may devise a third invariant of $\Delta$ by taking $\Lambda^+_{\rm sesi}$ to be the set of dominant weights identified under the Killing form with semisimple elements of $\mk g$ contained in proper semisimple subalgebras. Put $\ell_\Delta^{\rm sesi}=\min\{l(w):w\Lambda^+_{\rm sesi} \nsubseteq {\rm Cone}(\Delta^+)\}$. The proof of Theorem \ref{Theo tildeDelta ellK A} can be modified to show that the inequality $\ell^{\rm sesi}_\Delta>\#\tilde\Delta^+$ implies $\mc{LR}_0(\iota)=\Lambda_\RR^+$ for any embedding $\iota:\tilde G\subset G$. We have not computed the values of $\ell_\Delta^{\rm sesi}$, but for type ${\bf A}_n$ we observe that $\Lambda^+_{\rm sesi}$ does not contain any of the non-self-dual fundamental weights $\varpi_1,...,\varpi_{\lfloor n/2\rfloor}$. These are indeed the elements responsible for the low value of $\ell_\Delta$, as it appears from the proof of Theorem \ref{Theo ellDelta Classification}. It is thus possible that $\ell_\Delta^{\rm sesi}$ may be used instead of $\ell_\Delta^{\rm sd}$ to deduce property (A) for $SL_2$-subgroups.
\end{rem}

The article consists of three sections. In the next section we give our main definitions from GIT, introduce property (A-$k$) and formulate the main criterion for (A=$k$) in Theorem \ref{Theo tildeDelta ellK A}. We also derive corollaries from the criterion and the known values of $\ell_\Delta$ as given in Theorem \ref{Theo ellDelta Classification}. In Section \ref{Sect Proof of Main} we give a proof of Theorem \ref{Theo tildeDelta ellK A}. In Section \ref{Sect Compute} we study the invariants $\ell_\Delta$ and $\ell_\Delta^{\rm sd}$. We derive some general properties and compute their values as well as other related data. The values for classical groups are derived here. Most of the data for exceptional groups other than ${\bf E}_8$ is obtained with the use of a computer program available at \cite{Compute}.\\

\noindent{\bf Acknowledgment:} We thank Peter Heinzner for his support.

\section{GIT setting and main results}\label{Sect Settnmain}

The flag varieties of semisimple complex Lie groups, i.e., the quotient spaces by parabolic subgroups, constitute exactly the class of homogeneous projective varieties. From now on, unless otherwise specified, $G$ denotes a semisimple simply connected complex Lie group, $B\subset G$ a Borel subgroup and $H\subset B$ a Cartan subgroup. We denote by $X=G/B$ the complete flag variety of $G$. Then any flag variety of $G$ can be written as $G/P$ with a parabolic subgroup $B\subset P\subset G$.

The Picard group of $X$ is identified with the weight lattice $\Lambda$, by $\lambda\mapsto\mc L_\lambda=G\times_B\CC_{-\lambda}$. By the Borel-Weil theorem, the effective line bundles are exactly those defined by dominant weights and $H^0(X,\mc L_\lambda)\cong V_\lambda^*$ for $\lambda\in\Lambda^+$. The ample line bundles correspond to the strictly dominant weights, whose set is denoted by $\Lambda^{++}$. The image of $X$ under the map to $\PP(V_\lambda)$ defined by $\mc L_\lambda$ with $\lambda\in\Lambda^+$ is the $G$-orbit of the highest weight line and will be denoted by $X_\lambda$, so $X\to X_\lambda= G[v_\lambda]\subset \PP(V_\lambda)$. Let $P_\lambda=G_{[v_\lambda]}$ be the (parabolic) isotropy subgroup, so that $X_\lambda=G/P_\lambda$. The section ring $\mc R(\lambda)$ of $\mc L_\lambda$ and the Cox ring, or total coordinate ring, of $X$ are thus given by
$$
\mc R(\lambda)=\bigoplus\limits_{q=0}^\infty H^0(X,\mc L_\lambda^q)\cong
\bigoplus\limits_{q=0}^\infty V_\lambda^*\;,\;\; {\rm Cox}(X)=
\bigoplus\limits_{\mc L\in{\rm Pic}(X)}^\infty H^0(X,\mc L)\cong\bigoplus\limits_{\lambda\in\Lambda^+}^\infty V_\lambda \;.
$$

Given an embedding $\iota:\tilde G\to G$, we obtain a $\tilde G$-action on $X$, yielding homogeneous linear actions on each of the above rings. Let $\mc R_+(\lambda)$ denote the ideal in $\mc R(\lambda)$ spanned by the sections of positive powers of $\mc L_\lambda$. The common vanishing locus of the $\tilde G$-invariants in $\mc R_+(\lambda)$ is called the nullcone, or unstable locus, denoted by
$$
X^{us}_{\tilde G}(\lambda) = Z(\mc R_+(\lambda)^{\tilde G}) \;.
$$
Clearly, for a nonzero $\lambda\in\Lambda^+$, we have $\lambda\in\mc{LR}_0(\iota)$ if and only if $X^{us}_{\tilde G}(\lambda)\ne X$. Furthermore, the definition $X^{us}_{\tilde G}(\lambda)$ extends to $\lambda\in\Lambda_\QQ$, by consideration of powers, and to $\Lambda_\RR={\rm Pic}(X)_\RR$ by continuity. The elements $\lambda\in\Lambda_\RR^{++}$ (resp. $\Lambda_\RR^+$) with $X^{us}_{\tilde G}(\lambda)\ne X$ are called $\tilde G$-ample (resp. $\tilde G$-semiample). The $\tilde G$-ample elements form a cone in $\Lambda_\RR^{++}$, called the $\tilde G$-ample cone on $X$, and denoted by $C^{\tilde G}(X)$. It satisfies $C^{\tilde G}(X)=\mc{LR}_0(\iota)\cap\Lambda_\RR^{++}$ and, whenever it is nonempty, $\ol{C^{\tilde G}(X)}=\mc{LR}_0(\iota)$. The $\tilde G$-ample cone is partitioned into GIT-classes by the equivalence relation: $\lambda\sim\lambda'$ if and only if $X^{us}_{\tilde G}(\lambda)=X^{us}_{\tilde G}(\lambda')$. By a result of Ressayre, \cite{Ress-GITclasses}, the GIT-classes form a fan in $C^{\tilde G}(X)$. A concrete description of this fan for the case of the flag variety $X=G/B$ is given in \cite{Sepp-Tsan-2020}. 

In \cite{Sepp-Tsan-2020} it is shown that the codimension of the unstable locus varies by at most 1, between GIT-classes whose closures intersect. More specifically, we have the following. We denote
$$
{\rm cod}(\lambda)={\rm cod}_{\tilde G}(\lambda) = {\rm codim}_X X^{us}_{\tilde G}(\lambda) \quad{\rm for}\quad \lambda\in\Lambda^+ \;,
$$
and define, for $k\in\NN$,
$$
\mc C_k(\iota)=C_k^{\tilde G}(X) = \{\lambda\in\Lambda_\RR^{++}: {\rm cod}(\lambda)\geq k\} \;.
$$

For $k=1$ we obtain the $\tilde G$-ample cone, $\mc C_1(\iota)=C^{\tilde G}(X)$.

For $k=2$ we obtain the so-called $\tilde G$-movable cone, denoted ${\rm Mov}^{\tilde G}(X)=\mc C_2(\iota)$, which is also of special importance as explained in the next section.

It is shown in \cite{Sepp-Tsan-2020} that $\mc C_k$ is a rational polyhedral cone in the interior of the Weyl chamber $\Lambda_\RR^{++}$. Furthermore, $\mc C_{k+1}(\iota)$ is contained in the relative interior of $\mc C_k(\iota)$. We denote
$$
k(\iota)=\min\{{\rm cod}(\lambda):\lambda\in\Lambda^+\setminus\{0\}\}\;.
$$
We consider the following properties for embedding $\iota:\tilde G\subset G$:\\

\noindent{\bf Property (A-$k$)} is said to hold for $\iota$ if $\mc{LR}_0(\iota)=\ol{\mc C_k(\iota)}=\Lambda_\RR^+$. Equivalently, $k(\iota)\geq k$.\\

Our main observation is the following.

\begin{theorem}\label{Theo tildeDelta ellK A}
Let $G$ be a connected semisimple complex Lie group and let $\iota:\tilde G\subset G$ be a reductive closed connected complex subgroup. Then the following hold: 
\begin{enumerate}
\item[{\rm (i)}] $k(\iota)\geq \ell_\Delta-\#\tilde\Delta^+$. In particular, if the inequality $\#\tilde\Delta^+<\ell_\Delta$ holds, then
$$
\mc{LR}_0(\iota)=\ol{\mc C_{\ell_\Delta-\#\tilde\Delta^+}(\iota)}=\Lambda_\RR^+
$$
and property (A-$k$) holds for $1\leq k\leq \ell_\Delta^{\rm sd}-\#\tilde\Delta^+$.
\item[{\rm (ii)}] Suppose in addition $\tilde w_0=-1$. Then $k(\iota)\geq \ell^{\rm sd}_\Delta-\#\tilde\Delta^+$. In particular, if $\#\tilde\Delta^+<\ell^{\rm sd}_\Delta$ holds, then
$$
\mc{LR}_0(\iota)=\ol{\mc C_{\ell_\Delta^{\rm sd}-\#\tilde\Delta^+}(\iota)}=\Lambda_\RR^+
$$
and property (A-$k$) holds for $1\leq k\leq \ell_\Delta^{\rm sd}-\#\tilde\Delta^+$.

\end{enumerate}
\end{theorem}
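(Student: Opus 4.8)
The plan is to reduce the statement to a quantitative version of the Kempf–Ness / Hilbert–Mumford theory for the $\tilde G$-action on the flag variety $X=G/B$, in the form developed in \cite{Sepp-Tsan-2020}. Recall that for a point $x\in X$ and a one-parameter subgroup $\gamma$ of $\tilde G$ the Hilbert–Mumford weight $\mu^{\mc L_\lambda}(x,\gamma)$ is linear in $\lambda$, so that $X^{us}_{\tilde G}(\lambda)$ is the union, over (finitely many, up to conjugacy) destabilizing one-parameter subgroups $\gamma$, of the contracting sets $S(\gamma)=\{x:\lim_{t\to 0}\gamma(t)x \text{ exists in the } \gamma\text{-fixed set where } \mu<0\}$. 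The key point, already in \cite{Sepp-Tsan-2020}, is that each such $\gamma$ is $\tilde G$-conjugate into a Cartan subalgebra $\tilde{\mk h}\subset\tilde{\mk g}$, hence corresponds after composition with $\iota$ to a (rational) element of the Cartan $\mk h$ of $G$, and the corresponding stratum of $X$ fibers $\tilde G$-equivariantly with fiber a Schubert-type cell whose codimension in $X$ is governed by the number of positive roots of $G$ made negative by $\gamma$, minus a correction coming from the positive roots of $\tilde G$ — precisely $\#\tilde\Delta^+=\dim\tilde G/\tilde B$ bounds the dimension of the $\tilde G$-orbit directions that must be subtracted. Concretely one gets, for every nonzero $\lambda\in\Lambda^+$, that
\[
{\rm cod}(\lambda)\;\geq\;\min_{w}\#\{\alpha\in\Delta^+: \langle w\lambda,\check\alpha\rangle<0\}\;-\;\#\tilde\Delta^+,
\]
where the minimum is over those $w\in W$ that can occur as the ``Weyl-group part'' of a destabilizing $\gamma$ for $\lambda$.

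The next step is to bound the inner minimum below by $\ell_\Delta$ (resp.\ $\ell_\Delta^{\rm sd}$). If $\lambda$ is not $\tilde G$-semistable on $\mc L_\lambda$, i.e.\ $\lambda\notin\mc{LR}_0(\iota)$, then there is a destabilizing one-parameter subgroup, whose associated $w\lambda$ must fail to lie in ${\rm Cone}(\Delta^+)$ — otherwise $\mu^{\mc L_\lambda}(x,\gamma)\geq 0$ for the relevant $x$, contradicting instability — and for such $w$ we have $l(w)\geq\ell^\lambda\geq\ell_\Delta$ by the very definition of $\ell_\Delta$ in \eqref{For Def ellK}. Combined with the displayed inequality and the fact that $\#\{\alpha\in\Delta^+:\langle w\lambda,\check\alpha\rangle<0\}$ is exactly (a lower bound for, via) $l(w)$ when $w\lambda\notin{\rm Cone}(\Delta^+)$, this yields ${\rm cod}(\lambda)\geq\ell_\Delta-\#\tilde\Delta^+$ for every $\lambda$ with $\lambda\notin\mc{LR}_0(\iota)$; for $\lambda\in\mc{LR}_0(\iota)$ with ${\rm cod}(\lambda)$ small one argues similarly on the boundary GIT-classes, using that ${\rm cod}$ drops by at most $1$ across adjacent classes. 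Hence $k(\iota)\geq\ell_\Delta-\#\tilde\Delta^+$, which is part (i); the refinement to $\ell_\Delta^{\rm sd}$ in part (ii) uses that when $\tilde w_0=-1$ the representation $\tilde V_{\tilde\lambda}$ is self-dual, forcing the $\tilde G$-socle one tests to be self-dual as a $G$-weight, so only $w\lambda\in\Lambda^+_{\rm sd}$ enter the minimum and $l(w)\geq\ell_\Delta^{\rm sd}$. Finally, if $\#\tilde\Delta^+<\ell_\Delta$ then $k(\iota)\geq 1$, i.e.\ every nonzero dominant $\lambda$ is $\tilde G$-semiample, which is exactly $\mc{LR}_0(\iota)=\Lambda_\RR^+$; since additionally ${\rm cod}(\lambda)\geq\ell_\Delta-\#\tilde\Delta^+$ for all such $\lambda$, the cone $\mc C_{\ell_\Delta-\#\tilde\Delta^+}(\iota)$ has closure the full Weyl chamber, and property (A-$k$) holds up to $k=\ell^{\rm sd}_\Delta-\#\tilde\Delta^+$.

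The main obstacle is the precise bookkeeping in the first step: making rigorous the passage from an arbitrary destabilizing one-parameter subgroup of $\tilde G$ to an element $w\in W$ and the statement that the codimension of the corresponding unstable stratum in $X=G/B$ is at least $l(w)-\#\tilde\Delta^+$. This requires recalling from \cite{Sepp-Tsan-2020} the stratification of $X^{us}_{\tilde G}(\lambda)$ by Kirwan–Ness strata, identifying each stratum's ``blade'' as a $\mc L_\lambda$-unstable Schubert cell $BwB/B$ (or rather its $\tilde G$-saturation), and checking that the $\tilde G$-orbit directions one adds back have dimension at most $\dim\tilde G/\tilde B=\#\tilde\Delta^+$ — here one uses that the stabilizer in $\tilde G$ of the relevant parabolic contains a Borel of $\tilde G$ only in degenerate cases, and in general the dimension count is $\dim\tilde G-\dim(\tilde G\cap wPw^{-1})\leq\#\tilde\Delta^+$. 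I would treat the generic stratum carefully and then invoke the ``codimension varies by at most $1$ between adjacent GIT-classes'' result of \cite{Sepp-Tsan-2020} to propagate the bound to all classes, including those meeting the boundary of $C^{\tilde G}(X)$, which is what lets one conclude the statement for every $\lambda\in\Lambda^+\setminus\{0\}$ rather than only for $\tilde G$-ample ones.
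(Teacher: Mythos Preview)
Your approach to part (i) is essentially the paper's: invoke the codimension formula for $X^{us}_{\tilde G}(\lambda)$ from \cite{Sepp-Tsan-2020} (here Theorem \ref{Theo SeppTsan Xuslambda cod}), which bounds ${\rm cod}_{\tilde G}(\lambda)$ below by $\min\{{\rm cod}_h(w)-\#\tilde\Delta_h^+\}$ over destabilizing pairs $(h,w)$, and then observe that ${\rm cod}_h(w)\geq\ell_{-h}^-(\lambda)\geq\ell_\Delta$ whenever $(h,w)$ is destabilizing for $\lambda$. Your displayed inequality and the sentence after it are slightly garbled --- the quantity $\#\{\alpha\in\Delta^+:\langle w\lambda,\check\alpha\rangle<0\}$ is \emph{at most} $l(w)$ for dominant $\lambda$, not a lower bound for it, and the correct codimension is expressed via the $h$-twisted length $l_h(w)=l(\sigma_h^{-1}w\sigma_h)$ rather than $l(w)$ --- but the mechanism is right and easily repaired. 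The detour through ``codimension varies by at most $1$ across adjacent GIT-classes'' is unnecessary: the codimension formula already applies to every $\lambda\in\Lambda^{++}$ directly, so $k(\iota)\geq\ell_\Delta-\#\tilde\Delta^+$ follows without any propagation argument.

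Part (ii), however, has a genuine gap. Your explanation --- that $\tilde w_0=-1$ makes $\tilde V_{\tilde\lambda}$ self-dual, ``forcing the $\tilde G$-socle one tests to be self-dual as a $G$-weight, so only $w\lambda\in\Lambda^+_{\rm sd}$ enter the minimum'' --- misidentifies what is self-dual. The weight $\lambda$ in the theorem is an arbitrary dominant weight of $G$; nothing about $\tilde G$ forces $\lambda$ (or $w\lambda$) to be self-dual in $\Lambda$. What $\tilde w_0=-1$ does force is that every one-parameter subgroup $h\in\tilde\Gamma^+$ is $\tilde W$-conjugate to $-h$, hence self-dual as an element of $\Gamma$ after embedding via $\iota$ (with compatible Weyl chambers). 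Thus the Ressayre elements satisfy $\mk R\subset\Gamma_{\rm sd}$, and the minimum in the codimension formula runs only over self-dual $h$. The improvement from $\ell_\Delta$ to $\ell_\Delta^{\rm sd}$ then comes from the dual characterization $\ell_\Delta^{\rm sd}=\min\{\ell^h:h\in\Gamma_{\rm sd}\setminus\{0\}\}$, which is equivalent to the weight-side definition (\ref{For Def ellK}) via the symmetry $\ell_h^-(\lambda)=\ell_\lambda^-(h)$ of Lemma \ref{Lemma lminush min bj}. Without locating the self-duality on the $h$-side rather than the $\lambda$-side, your argument for (ii) does not go through.
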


The proof is given in Section \ref{Sect Proof of Main}. It is based on a result of \cite{Sepp-Tsan-2020} cited here as Theorem \ref{Theo SeppTsan Xuslambda cod}, giving a formula for the unstable locus $X^{us}_{\tilde G}(\lambda)$ and its codimension. Fundamentally, all considerations are enabled by the Hilbert-Mumford criterion and its explicit application for flag varieties, which naturally relates to Schubert cell decompositions.\\

In the rest of this section we recall some GIT notions and results related to the condition (A-2), and derive corollaries from the above theorem and the knowledge of the values of $\ell_\Delta$ and $\ell_\Delta^{\rm sd}$ given in Theorem \ref{Theo ellDelta Classification}.

\begin{rem}
The above theorem provides a sufficient condition for property (A-$k$), independent of the embedding $\iota$, referring only to the respective root systems of the two groups. The condition can be sharpened if the embedding is taken into account, as can be seen from the proof, Lemma \ref{Lemma ElCk} in particular. In this generality, however, the condition is in a sense optimal. Indeed, for part (i), we observe that taking $\tilde G=H$ to be a Cartan subgroup of $G$, we obtain
$$
k(H\subset G) = \ell_\Delta \;.
$$
For part (ii), we observe that for $SL_2$-subgroups, the equality $\#\tilde\Delta^+=\ell^{\rm sd}_\Delta$ is equivalent to the presence of a simple factor of $G$ of rank $1$. By choosing an $SL_2$-subgroup $\iota:\tilde G\subset G$ projecting nontrivially to this factor, we obtain $\mc{LR}_0(\iota)\ne\Lambda_\RR^+$, as remarked in the Introduction.
\end{rem}

\subsection{GIT-quotients and the $\tilde G$-movable cone on $G/B$}\label{Sect GIT quotients}

Here we recall come definitions from GIT as well as a result showing the significance of property (A-2), renamed here as property (M). We also derive a corollary from our theorem concerning this property.

For $\lambda\in\mc C_1(\iota)$, the semistable locus $X^{ss}_{\tilde G}(\lambda)=X\setminus X^{us}_{\tilde G}(\lambda)$ is nonempty. The GIT-quotient $Y_\lambda=X^{ss}_{\tilde G}(\lambda)//G$ is defined by Hilbert's relation, where two points are equivalent if the closures of their $\tilde G$-orbits intersect in the semistable locus. The quotient is clearly independent of the choice of $\lambda$ within its GIT-class.

A GIT-class $C\subset \mc C_1(\iota)$ is called a chamber, or $\tilde G$-chamber, if all semistable $\tilde G$-orbits are infinitesimally free, or equivalently, if $X^{us}_{\tilde G}(\lambda)$ contains all points with positive dimensional $\tilde G$-isotropy group. The chamber-quotients are geometric quotients. It is shown in \cite{Sepp-Tsan-2020} that the $\tilde G$-chambers on $G/B$ are exactly the GIT-classes open in $\Lambda_\RR$, whenever they exist.

The Picard group of a GIT-quotient is naturally related to the Picard group of the original variety, cf. \cite{KKV}. This relation becomes stronger when the unstable locus does not contain divisors. The latter is expressed as ${\rm cod}(\lambda)\geq 2$; such $\lambda$ and their GIT-classes are called movable. This motivates the introduction of the $\tilde G$-movable cone ${\rm Mov}^{\tilde G}(G/B)=\mc C_2(\iota)$. The GIT-classes which are movable chambers admit remarkably strong properties, exemplified by the following theorem. We refer the reader to \cite{Sepp-Tsan-2020} as well as to \cite{HuKeel} for the undefined notions, and note that the rest of this article does not refer to Mori theory. 

\begin{theorem}\label{Theo ST Mori} (cf. \cite[Th. II]{Sepp-Tsan-2020})

Suppose that $C\subset\mc C_2(\iota)={\rm Mov}^{\tilde G}(G/B)$ is a movable $\tilde G$-chamber and let $Y=Y_C$ be the GIT-quotient. Then $Y$ is a normal projective variety and a Mori dream space; the Picard group of $Y$ is naturally injected into ${\rm Pic}(G/B)=\Lambda$ as a sublattice of finite index, so that ${\rm Pic}(Y)_\RR\cong\Lambda_\RR$; the pseudoeffective cone of $Y$ is identified with the closure the $\tilde G$-ample cone $\ol{\rm Eff}(Y)\cong\ol{\mc C}_1(\iota)=\mc{LR}_0(\iota)$; the movable cone of $Y$ -- with the $\tilde G$-movable cone $\ol{\rm Mov}(Y)\cong\mc C_2(\iota)={\rm Mov}^{\tilde G}(G/B)$; the Mori chambers in $\ol{\rm Eff}(Y)$ are identified with the $\tilde G$-chambers in ${\mc C}_1(\iota)$; the Nef cone of $Y$ is identified with the closure of the chosen movable $\tilde G$-chamber ${\rm Nef}(Y)\cong \ol{C}$; the Cox ring of $Y$ is naturally embedded in the invariant ring ${\rm Cox}(G/B)^{\tilde G}=\oplus _\lambda V_\lambda^{\tilde G}$ and the latter is a finite extension of the former.
\end{theorem}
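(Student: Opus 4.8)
The plan is to derive the whole statement from two inputs: that $G/B$ has a finitely generated Cox ring --- indeed ${\rm Cox}(G/B)=\bigoplus_{\lambda\in\Lambda^+}V_\lambda$ is the multihomogeneous coordinate ring of a flag variety, a finitely generated $\CC$-algebra --- and that, because $C$ is a movable $\tilde G$-chamber, the semistable locus $U:=X^{ss}_{\tilde G}(\lambda)\subset X$ for $\lambda\in C$ has complement of codimension $\geq 2$ and carries a geometric $\tilde G$-action with finite stabilizers. By Hilbert--Nagata the first input gives that ${\rm Cox}(G/B)^{\tilde G}=\bigoplus_\lambda V_\lambda^{\tilde G}$ is a finitely generated $\CC$-algebra; the second input will identify this, up to a module-finite extension, with ${\rm Cox}(Y)$, so that $Y$ is a Mori dream space by the Hu--Keel criterion. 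I would carry this out in the order: (1) $Y$ is normal, projective, $\QQ$-factorial; (2) ${\rm Pic}(Y)\hookrightarrow\Lambda$ with finite index; (3) ${\rm Cox}(Y)\hookrightarrow\bigoplus_\lambda V_\lambda^{\tilde G}$ is a finite extension, hence $Y$ is a Mori dream space; (4) identification of the effective, movable and nef cones and of the Mori chamber structure with the GIT data.

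For (1): $Y=U/\!\!/\tilde G$ is projective and normal by Mumford's GIT since $X$ is, and since all semistable orbits are closed in $U$ with finite stabilizer, $\pi\colon U\to Y$ is a geometric quotient; thus $Y$ has at worst finite quotient singularities, so it is $\QQ$-factorial and ${\rm Pic}(Y)_\QQ=N^1(Y)_\QQ$. For (2): restriction gives ${\rm Pic}(U)={\rm Pic}(X)=\Lambda$ because the complement of $U$ has codimension $\geq 2$, and each $\mc L_\mu=G\times_B\CC_{-\mu}$ carries its canonical $G$-, hence $\tilde G$-linearization. The pull-back $\pi^*\colon{\rm Pic}(Y)\to{\rm Pic}(U)=\Lambda$ is injective because $\pi$ is a geometric quotient with generically trivial stabilizer, and by Kempf's descent lemma $\mc L_\mu$ descends to $Y$ as soon as the finite stabilizers act trivially on its fibres, which holds after replacing $\mu$ by a bounded multiple (dividing by the l.c.m.\ of the stabilizer orders); hence the image has finite index and ${\rm Pic}(Y)_\RR\cong\Lambda_\RR$.

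For (3): a $\tilde G$-invariant section of $\mc L_\mu^{q}$ on $X$ restricts to $U$ and, for $\mu\in{\rm Pic}(Y)$ and suitable $q$, descends to a section of the corresponding bundle $\mc M_\mu$ on $Y$; conversely sections of $\mc M_\mu$ pull back to invariant sections on $U$, which extend over the codimension $\geq 2$ complement to $X$ by normality. This yields a graded embedding ${\rm Cox}(Y)=\bigoplus_{\mu\in{\rm Pic}(Y)}H^0(Y,\mc M_\mu)\hookrightarrow\bigoplus_{\mu}H^0(X,\mc L_\mu)^{\tilde G}\subset\bigoplus_{\lambda\in\Lambda^+}V_\lambda^{\tilde G}={\rm Cox}(G/B)^{\tilde G}$. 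Since $\tilde G$ is reductive and ${\rm Cox}(G/B)$ is finitely generated, the right-hand side is a finitely generated $\CC$-algebra, and since ${\rm Pic}(Y)$ has finite index in $\Lambda$ it is module-finite over its graded subring ${\rm Cox}(Y)$; hence ${\rm Cox}(Y)$ is finitely generated, and with (1) the Hu--Keel criterion gives that $Y$ is a Mori dream space.

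For (4): the polarization on $Y$ induced by $\mc L_\lambda$ is ample, the ample cone of $Y$ is the image of the open $\tilde G$-chamber $C$, and so ${\rm Nef}(Y)=\ol C$. A class $\mu\in\Lambda_\RR^{++}$ lies in $\ol{{\rm Eff}}(Y)$ iff $H^0(Y,\mc M_{q\mu})\ne 0$ for some $q$, iff $V_{q\mu}^{\tilde G}\ne 0$, i.e.\ iff $\mu\in\mc{LR}_0(\iota)$; and $\mu$ is movable iff the stable base locus of $|q\mu|$ has codimension $\geq 2$ on $Y$, which via the description of $X^{us}_{\tilde G}(q\mu)$ translates into ${\rm cod}(q\mu)\geq 2$, i.e.\ $\mu\in\mc C_2(\iota)={\rm Mov}^{\tilde G}(G/B)$. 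Finally, the variation-of-GIT analysis (wall-crossing inside ${\rm Mov}^{\tilde G}(G/B)$ produces small modifications, and crossing the boundary of the movable cone produces divisorial contractions) shows that two $\tilde G$-ample classes give GIT quotients isomorphic in codimension one precisely when they lie in the same $\tilde G$-chamber, so the Mori chamber decomposition of $\ol{{\rm Eff}}(Y)$ is exactly the fan of $\tilde G$-chambers in $\mc C_1(\iota)$. \emph{The main obstacle} is this last step: making the VGIT wall-crossing picture precise enough to conclude that interior walls give isomorphisms in codimension one and that these match the Mori chambers, together with the careful bookkeeping of stabilizer orders and $\tilde G$-linearizations needed to pin down the finite index of ${\rm Pic}(Y)$ in $\Lambda$ and the module-finiteness of ${\rm Cox}(G/B)^{\tilde G}$ over ${\rm Cox}(Y)$.
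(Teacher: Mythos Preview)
The paper does not prove this theorem at all: it is quoted verbatim from \cite[Th.~II]{Sepp-Tsan-2020} and explicitly flagged as background, with the authors writing ``We refer the reader to \cite{Sepp-Tsan-2020} as well as to \cite{HuKeel} for the undefined notions, and note that the rest of this article does not refer to Mori theory.'' So there is no in-paper proof to compare your proposal against.

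That said, your outline is a faithful reconstruction of the standard route to such a statement, combining the Hu--Keel criterion with Kempf descent and variation of GIT (Dolgachev--Hu/Ressayre), and it is essentially the line taken in \cite{Sepp-Tsan-2020}. One small inaccuracy: in step (2) you justify injectivity of $\pi^*$ by ``generically trivial stabilizer,'' but the generic $\tilde G$-stabilizer on $G/B$ need not be trivial, only finite. Injectivity of $\pi^*$ follows instead from the connectedness of $\tilde G$ (hence of the fibres of $\pi$). You correctly identify the genuine work as step (4): matching the VGIT wall-and-chamber structure with the Mori chamber decomposition is where the substance lies, and is exactly what \cite{Sepp-Tsan-2020} carries out in detail.
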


Movable chambers exist if and only if $\mc C_2$ has nonempty interior. Various numerical criteria for this are deduced in \cite{Sepp-Tsan-2020}. Clearly property (A-2) implies not mere existence, but every chamber being movable and $\Lambda_\RR^+$ being the closure of the union of chambers. Thus (A-2) is equivalent to the following.\\

\noindent{\bf Property (M):} $\ol{\rm Mov}^{\tilde G}(G/B)=\mc{LR}_0(\iota)=\Lambda_\RR^+$. Equivalently, any open GIT-class in $C\subset\Lambda_\RR^+$ is a movable chamber, Theorem \ref{Theo ST Mori} applies for the quotient $Y=Y_C$ and, in addition, $\ol{\rm Eff}(Y)=\ol{\rm Mov}(Y)\cong\Lambda_\RR^+$.\\

Theorem \ref{Theo tildeDelta ellK A} yields, as a special case, a sufficient condition for the above property to hold.

\begin{coro}\label{Coro all Move}
Let $\tilde G, G$ be connected reductive groups, $G$ semisimple. Suppose that at least one of the following conditions is satisfied:
\begin{enumerate}
\item[{\rm (i)}] $\ell_\Delta-\#\tilde\Delta> 1$;
\item[{\rm (ii)}] $\tilde w_0=-1$ and $\ell_\Delta^{\rm sd}-\#\tilde\Delta> 1$.
\end{enumerate}
Then $\ol{\rm Mov}^{\tilde G}(G/B)=\mc{LR}_0(\iota)=\Lambda_\RR^{+}$ and property (M) holds for any embedding $\iota:\tilde G\subset G$.
\end{coro}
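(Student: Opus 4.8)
The plan is to derive this as an immediate consequence of Theorem \ref{Theo tildeDelta ellK A}, specialized to the value $k=2$, together with the identification of property (A-$2$) with property (M) established just above the statement. First I would recall that by definition property (A-$2$) holds for $\iota$ precisely when $\mc{LR}_0(\iota)=\ol{\mc C_2(\iota)}=\Lambda_\RR^+$, and that the paragraph preceding the corollary records the equivalence of (A-$2$) with property (M). So it suffices to check that either hypothesis (i) or (ii) forces (A-$2$) via Theorem \ref{Theo tildeDelta ellK A}.

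For part (i): assume $\ell_\Delta-\#\tilde\Delta>1$. Since $\#\tilde\Delta^+=\tfrac12\#\tilde\Delta$ and the relevant inequality in Theorem \ref{Theo tildeDelta ellK A} is phrased in terms of $\#\tilde\Delta^+$, I would first translate the hypothesis: $\ell_\Delta-\#\tilde\Delta>1$ means $\ell_\Delta-\#\tilde\Delta^+>1+\#\tilde\Delta^+\geq 2$ when $\tilde G$ is not a torus, and trivially gives $\ell_\Delta-\#\tilde\Delta^+\geq 2$ in the torus case $\#\tilde\Delta^+=0$ as well (here one uses $\ell_\Delta\geq 1$ by definition and the hypothesis $\ell_\Delta>\#\tilde\Delta+1=1$, hence $\ell_\Delta\geq 2$). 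In either case $\ell_\Delta-\#\tilde\Delta^+\geq 2$, so in particular $\#\tilde\Delta^+<\ell_\Delta$ and Theorem \ref{Theo tildeDelta ellK A}(i) applies, giving $\mc{LR}_0(\iota)=\Lambda_\RR^+$ together with $k(\iota)\geq \ell_\Delta-\#\tilde\Delta^+\geq 2$; the latter says ${\rm cod}(\lambda)\geq 2$ for all $\lambda\in\Lambda^+\setminus\{0\}$, i.e. $\mc C_2(\iota)$ has nonempty interior and $\ol{\mc C_2(\iota)}=\Lambda_\RR^+$. Thus property (A-$2$), and hence property (M), holds.

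For part (ii): assume $\tilde w_0=-1$ and $\ell_\Delta^{\rm sd}-\#\tilde\Delta>1$. The same arithmetic as above (using $\ell_\Delta^{\rm sd}\geq 1$ and $\#\tilde\Delta^+=\tfrac12\#\tilde\Delta$) yields $\ell_\Delta^{\rm sd}-\#\tilde\Delta^+\geq 2$, so $\#\tilde\Delta^+<\ell_\Delta^{\rm sd}$ and Theorem \ref{Theo tildeDelta ellK A}(ii) applies, giving $k(\iota)\geq \ell_\Delta^{\rm sd}-\#\tilde\Delta^+\geq 2$ and $\mc{LR}_0(\iota)=\Lambda_\RR^+$, hence again (A-$2$) and property (M). Finally I would record the conclusion $\ol{\rm Mov}^{\tilde G}(G/B)=\mc C_2(\iota)=\ldots=\Lambda_\RR^+$ using the definition ${\rm Mov}^{\tilde G}(G/B)=\mc C_2(\iota)$ from Section \ref{Sect GIT quotients}. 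The only mild subtlety — the "main obstacle", though it is minor — is the bookkeeping around the torus case and the passage between $\#\tilde\Delta$ and $\#\tilde\Delta^+$, which must be handled so that the strict inequality "$>1$" in the hypothesis genuinely upgrades to "$\geq 2$" for the quantity $\ell-\#\tilde\Delta^+$ appearing in Theorem \ref{Theo tildeDelta ellK A}; everything else is a direct citation.
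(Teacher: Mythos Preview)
Your approach is correct and matches the paper's: the corollary is stated without proof, immediately after the sentence ``Theorem \ref{Theo tildeDelta ellK A} yields, as a special case, a sufficient condition for the above property to hold,'' so the intended argument is exactly the specialization to $k=2$ that you carry out.

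One remark on the bookkeeping you flag as the ``main obstacle'': the symbol $\#\tilde\Delta$ in the corollary is almost certainly a typo for $\#\tilde\Delta^+$. Every other occurrence in the paper uses $\#\tilde\Delta^+$, and the two places that \emph{apply} this corollary (the proofs of Theorems \ref{Theo SL2} and \ref{Theo E8 all movable}) verify the condition in the form $\ell_\Delta^{\rm sd}>1+\#\tilde\Delta^+$. With the intended reading, the passage from hypothesis to $\ell_\Delta-\#\tilde\Delta^+\geq 2$ is immediate (both quantities are integers), and your case split for the torus and the translation $\#\tilde\Delta=2\#\tilde\Delta^+$ become unnecessary. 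Your argument still goes through for the literal statement, of course, but it proves something strictly stronger than what the paper uses.
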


\begin{rem}
In the presence of property (M), the $\tilde G$-chambers cover an open dense subset of $\Lambda^+_\RR$, and they are all movable. Thus it is much easier to obtain a weight $\lambda$ whose GIT-class is a movable $\tilde G$-chamber, and hence to obtain a GIT-quotient with the global properties described above. Indeed, in this situation, every $\tilde T$-chamber defines a movable $G$-chamber. In turn, the $\tilde T$-chambers are obtained as the connected components of the complement in $\Lambda_\RR^+$ of a union of hyperplanes determined combinatorially in \cite{Sepp-Tsan-2020}.
\end{rem}

\subsection{$SL_2$-subgroups}\label{Sect SL2 subgr}

In the following theorem we derive an exact criterion for the properties (A) and (M) for $SL_2$ subgroups of semisimple groups. Let us note that the initial general steps towards the description of $\mc{LR}_0$ for $SL_2$-subgroups have been outlined in \cite{Beren-Sjam}. Our methods allow to carry the calculations through and classify the exceptions to property (A). Furthermore, we also have access to property (A-$k$) and (M) in particular.

\begin{theorem}\label{Theo SL2}
Let $\iota:\tilde G\subset G$ be an $SL_2$-subgroup of a semisimple group $G$. Then the following hold:
\begin{enumerate}
\item[{\rm (i)}] $G$ has no simple factors of type ${\bf A}_1$ to which $\tilde G$ projects nontrivially, if and only if $\iota$ has property (A):
$$
\mc{LR}_0(\iota)=\Lambda_\RR^+ \;.
$$
\item[{\rm (ii)}] $G$ has no simple factors of type ${\bf A}_1,{\bf A}_2,{\bf B}_2$ to which $\tilde G$ projects nontrivially, if and only if $\iota$ has property (M):
$$
\ol{\rm Mov}^{\tilde G}(G/B)=  \mc{LR}_0(\iota)=\Lambda_\RR^+ \;.
$$
\item[{\rm (iii)}] Let $\mk g=\mk g_0\oplus\mk g_1\oplus...\oplus\mk g_k$ be the decomposition of $\mk g$, where $\mk g_1,...,\mk g_k$ are the simple ideals of rank 1 to which $\tilde{\mk g}$ projects nontrivially, and $\mk g_0$ is the ideal complementary to their sum. Let $h\in\tilde{\mk g}$ be an integral semisimple element and let $\Lambda^+$ be chosen so that $h$ is dominant. Any dominant weight $\lambda\in\Lambda^+$ can be written as $\lambda=\lambda_0+\lambda_1+...\lambda_k$, with $\lambda_j\in\Lambda^+_j$ dominant for $\mk g_j$. Then the cone $\mc{LR}_0$ is described by the following inequalities:
$$
\mc{LR}_0(\iota) = \{\lambda\in\Lambda^+: \; \lambda_j(h)\leq (\lambda_0+\lambda_1+...+\hat\lambda_j+...+\lambda_k)(h) \;,\; j=1,...,k \} \;,
$$
where $\hat{\cdot}$ indicates a skipped summand, as customary.
\end{enumerate}
\end{theorem}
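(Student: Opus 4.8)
\textbf{Proof proposal for Theorem \ref{Theo SL2}.}

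The plan is to reduce everything to the computation of $\mc{LR}_0(\iota)$ for a single $SL_2$-subgroup, and then read off (i) and (ii) from the explicit inequalities in (iii). So I would prove (iii) first and derive (i), (ii) as corollaries. Write $\mk g=\mk g_0\oplus\mk g_1\oplus\cdots\oplus\mk g_k$ as in the statement, with $\mk g_1,\dots,\mk g_k$ the rank-one simple ideals receiving a nontrivial projection of $\tilde{\mk g}$. The key structural point is that a dominant weight $\lambda=\lambda_0+\lambda_1+\cdots+\lambda_k$ lies in $\mc{LR}_0(\iota)$ if and only if, for every simple factor, the relevant invariant-theoretic obstruction vanishes; and since $\tilde G$ is $SL_2$, the whole problem is governed by the single semisimple element $h\in\tilde{\mk g}$ (a coweight), with $h$ made dominant by the choice of $\Lambda^+$. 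The point is that for $G=SL_2$ itself, $V_\mu^{SL_2}\ne 0$ for some power of $\mu$ exactly when $\mu(h)=0$ after dualizing, i.e. the weight is ``balanced''; more precisely the $0$-eigencone of an $SL_2$-action on a product of projective lines is cut out by the single inequality that no coordinate weight exceeds the sum of the others. I would invoke Theorem \ref{Theo SeppTsan Xuslambda cod} (the Hilbert--Mumford description of $X^{us}_{\tilde G}(\lambda)$ from \cite{Sepp-Tsan-2020}) applied to the one-parameter subgroup generated by $h$: the unstable locus is all of $X$ precisely when the $h$-weighted ``barycentre'' inequality fails, and for the factors $\mk g_0$ the projection of $h$ is $0$ so no inequality arises, while for each $\mk g_j$, $j\ge 1$, one gets exactly $\lambda_j(h)\le(\lambda-\lambda_j)(h)$. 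Concretely, since $\mk g_j$ has rank one, $X_{j}=G_j/B_j\cong\PP^1$, the weight $\lambda_j$ is a nonnegative multiple of the fundamental weight, and the Hilbert--Mumford numerical function for the single nontrivial destabilizing one-parameter subgroup $h$ on $G/B=\prod (G_j/B_j)$ reads off as $\sum_j \varepsilon_j\lambda_j(h)$ over sign choices $\varepsilon_j=\pm 1$, whose maximum is nonnegative iff some $\lambda_j(h)\le\sum_{i\ne j}\lambda_i(h)$ --- but one must be careful: one needs this simultaneously controllable, and the precise claim is that the \emph{worst} destabilizing direction is $h$ itself (up to Weyl conjugacy), so that $\mc{LR}_0$ is the intersection over $j$ of the half-spaces $\{\lambda_j(h)\le(\lambda_0+\cdots+\widehat{\lambda_j}+\cdots+\lambda_k)(h)\}$. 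The main obstacle is exactly this step: verifying that among all the inequalities produced by Theorem \ref{Theo SeppTsan Xuslambda cod} (one for each ``dominant one-parameter subgroup'' / pair $(w, \text{parabolic})$), the binding ones are precisely the $k$ listed, i.e. that no other Schubert-datum gives a strictly stronger constraint. For this I would use that $\tilde G=SL_2$ has a unique (up to sign and $\tilde W$) primitive coweight, so the indexing data in Theorem \ref{Theo SeppTsan Xuslambda cod} collapses, and that the Levi-movability / dominance condition forces the one-parameter subgroup to be (conjugate to) $h$; the factorization $G/B=\prod G_j/B_j$ then decouples the contributions, with $\mk g_0$ contributing nothing because $h$ centralizes $\mk g_0$.

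For part (i): by (iii), $\mc{LR}_0(\iota)=\Lambda_\RR^+$ iff all $k$ inequalities are vacuous on the whole dominant chamber, i.e. iff $k=0$ --- equivalently $\tilde{\mk g}$ projects trivially to every rank-one simple ideal. If $k\ge 1$ pick $j$ and a dominant $\lambda$ supported on $\mk g_j$ alone with $\lambda_j(h)>0$ (possible since the projection of $h$ to $\mk g_j$ is a nonzero coweight of a rank-one factor, hence pairs positively with the fundamental weight of that factor, after arranging dominance): then $\lambda_j(h)>0=(\lambda-\lambda_j)(h)$ so $\lambda\notin\mc{LR}_0(\iota)$. Conversely if $k=0$ there are no constraints, giving $\mc{LR}_0(\iota)=\Lambda^+_\RR$. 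Note this matches Theorem \ref{Theo Intro Main}(i): for $\tilde G=SL_2$ one has $\#\tilde\Delta^+=1$, and $\ell_\Delta>1$ for every simple type except ${\bf A}_1$ by Theorem \ref{Theo ellDelta Classification}, so the only obstruction comes from type-${\bf A}_1$ factors, consistent with the above; but the proof via (iii) is self-contained and also yields the ``only if''.

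For part (ii): property (M) is (A-2), i.e. $k(\iota)\ge 2$, i.e. ${\rm cod}_X X^{us}_{\tilde G}(\lambda)\ge 2$ for all $\lambda\in\Lambda^+\setminus\{0\}$. Again I would work factor by factor using the codimension formula in Theorem \ref{Theo SeppTsan Xuslambda cod}. For a factor $\mk g_j$ of rank one (type ${\bf A}_1$) the unstable locus in $\PP^1$ for a nontrivially acting $SL_2$ is a point or all of $\PP^1$, so generically of codimension $1$ --- this is the obstruction; similarly one must check that for factors of types ${\bf A}_2$ and ${\bf B}_2$ there exist $SL_2$-subgroups and weights for which the unstable locus is a divisor, while for \emph{all other} simple types (and for type ${\bf A}_1,{\bf A}_2,{\bf B}_2$ factors to which $\tilde G$ projects trivially) the codimension is always $\ge 2$. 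For the positive direction I would again compare with Theorem \ref{Theo tildeDelta ellK A}: $k(\iota)\ge\ell_\Delta-1$, and $\ell_\Delta\ge 3$ holds for every simple type except ${\bf A}_1$ ($\ell=1$), ${\bf A}_2$ ($\ell=1$), ${\bf B}_2={\bf C}_2$ ($\ell=2$) --- so for $\mk g_0$ having only simple factors outside this exceptional list, $k\ge 2$ automatically on the $\mk g_0$-part; the overall codimension is the sum of the codimensions over the factors receiving a nontrivial projection plus the constraint-free remaining factors, so if $\tilde G$ projects trivially to all factors of type ${\bf A}_1,{\bf A}_2,{\bf B}_2$ then $k(\iota)\ge 2$. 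The hard direction is the converse: if $\tilde G$ projects nontrivially to some factor of type ${\bf A}_1$, ${\bf A}_2$, or ${\bf B}_2$, exhibit a nonzero dominant $\lambda$ with ${\rm cod}(\lambda)=1$. For type ${\bf A}_1$ this is immediate (the unstable locus in $\PP^1$ is a point, codimension $1$) by taking $\lambda$ supported on that factor. For ${\bf A}_2$ and ${\bf B}_2$: classify the $SL_2$-subgroups up to conjugacy (there are finitely many: the regular/principal one and, for ${\bf A}_2$, the ${\bf A}_1$ coming from a root $SL_2$, etc.), compute the weighted barycentre / Hilbert--Mumford data for the destabilizing one-parameter subgroup $h$, and check that for a suitable $\lambda$ on the wall of $\mc{LR}_0$ the unstable locus has a divisorial component; the bookkeeping is finite but this is where I expect the real work to lie, and one likely leans on the explicit Schubert-cell description of $X^{us}_{\tilde G}(\lambda)$ from \cite{Sepp-Tsan-2020} for these small ranks. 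Assembling the factor-wise analysis --- codimensions add across the product $G/B=\prod G_j/B_j$ and across the constraint that $\lambda\ne 0$ forces some $\lambda_j\ne 0$ --- then yields the stated equivalence.
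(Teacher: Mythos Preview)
Your plan to prove (iii) first and then read off (i) and (ii) reverses the paper's logic and contains a genuine gap at the key step. In your argument for (iii) you write that ``$\mk g_0$ contribut[es] nothing because $h$ centralizes $\mk g_0$.'' This is false in general: $\mk g_0$ is by definition the sum of all simple ideals that are \emph{not} rank-one ideals receiving a nontrivial projection of $\tilde{\mk g}$; in particular it contains every simple factor of rank $\geq 2$, and $\tilde{\mk g}$ typically projects nontrivially there (if $G$ is simple of rank $\geq 2$ then $\mk g_0=\mk g$ and $h$ is far from central). The correct reason $\mk g_0$ yields no binding inequality is precisely part (i) applied to the projection of $\tilde G$ into $G_0$: since $\mk g_0$ has no rank-one simple factors hit by $\tilde{\mk g}$, property (A) holds there, so the relevant scalar products $\langle\lambda,wh\rangle$ for simple reflections $w$ lying in the $\mk g_0$-part are automatically nonnegative. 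This is exactly why the paper proves (i) \emph{first}, via Theorem~\ref{Theo tildeDelta ellK A}(ii) and the computed values of $\ell_\Delta^{\rm sd}$, and only then invokes (i) inside the proof of (iii). Your ordering makes the argument circular unless you supply an independent reason for the $\mk g_0$-vanishing, and the one you give is wrong.

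A second, related error runs through your treatment of both (i) and (ii): you repeatedly use $\ell_\Delta$ where the paper uses $\ell_\Delta^{\rm sd}$. You claim ``$\ell_\Delta>1$ for every simple type except ${\bf A}_1$'' and ``$\ell_\Delta\geq 3$ for every simple type except ${\bf A}_1,{\bf A}_2,{\bf B}_2$,'' but Theorem~\ref{Theo ellDelta Classification} gives $\ell_{{\bf A}_n}=1$ for \emph{every} $n\geq 1$, so neither assertion is true and your bound $k(\iota)\geq\ell_\Delta-1$ yields nothing for any type-${\bf A}$ factor. The point is that $\tilde w_0=-1$ for $SL_2$, so Theorem~\ref{Theo tildeDelta ellK A}(ii) applies and one may use $\ell_\Delta^{\rm sd}$, for which $\ell_{{\bf A}_n}^{\rm sd}=\lfloor(n+2)/2\rfloor$ does grow with $n$; this is what isolates the low-rank exceptions. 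Without switching to $\ell_\Delta^{\rm sd}$ your sufficiency arguments for (i) and (ii) do not go through for factors of type ${\bf A}_n$ with $n\geq 2$.
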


\begin{proof}
Since for $\tilde{\mk g}\cong\mk{sl}_2$ we have $\tilde w_0=-1$, we can apply Theorem \ref{Theo tildeDelta ellK A},(ii), and we are brought to estimate the number $\ell_\Delta^{\rm sd}-\#\tilde\Delta^+=\ell_\Delta^{\rm sd}-1$. We use Corollary \ref{Coro ellDeltaDual}, reducing the value of $\ell^{\rm sd}_\Delta$ to the values for the simple components of $\Delta$, and Theorem \ref{Theo ellDelta Classification}, providing the values for the simple types.

Part (i) follows from the fact that for simple $\Delta$ we have $\ell_\Delta^{\rm sd}\geq 1$ with equality if and only if $\Delta$ is of type ${\bf A}_1$.

Part (ii) follows from the fact that, for $\Delta$ of rank at least 2, we have $\ell^{\rm sd}_\Delta\geq 2$, with equality exactly for $\Delta$ of type ${\bf A}_2$ or ${\bf B}_2$. Thus for all other cases we can apply Corollary \ref{Coro all Move} and obtain property (M). The fact that property (M) does not hold for $SL_2$-subgroups of $G$, when $G$ is of type ${\bf A}_2$ and ${\bf B}_2$, can be checked directly without difficulty.

For part (iii), first note that the proposed inequalities hold for $\mc{LR}_0(\iota)$. Indeed, for each $j\in\{1,...,k\}$, the cone $\mc{LR}_0(\iota)$ can be interpreted as the eigencone $\mc{LR}(\iota_j)$, where $\iota_j:\tilde{\mk g}\to \mk g_j^\perp$ is the projection to the complementary ideal to $\mk g_j$ in $\mk g$. Now the proposed inequality indexed by $j$ is an obvious necessary condition for the for $\lambda_j(h)$ to appear as a (highest) weight of a $\iota_j(\tilde{\mk g})$-submodule of the $\mk g_j^\perp$-module with highest weight $\lambda_{\vert\mk g_j^\perp\cap \mk t}$. To see that there are no other inequalities, we use the known description of $\mc{LR}_0(\iota)$, cited here in Theorem \ref{Theo SeppTsan Ck}. The inequalities describing $\mc{LR}_0(\iota)$ in the case of $SL_2$-subgroups all have the form $\langle \lambda,wh\rangle\geq 0$ for suitable elements of $l(w)=1$, i.e., simple reflections. The simple reflections corresponding to $\mk g_1,...\mk g_k$ yield the proposed inequalities. For all other simple reflections, the scalar product is always nonnegative as implied by part (i).
\end{proof}

\subsection{Subgroups of classical groups}\label{Sect Classical prop A}

Any classical group $G$ comes equipped with its natural representation $V$, and subgroups $\tilde G\subset G$ can be studied via their action on $V$. Below we apply this method to derive a criterion for a subgroup of a classical group to possess property (A-$k$).

\begin{theorem}\label{Theo Classical}
Let $G$ be a classical group with simple Lie algebra and $V\cong \CC^m$ be its natural representation. Let $\iota:\tilde G\subset G$ be a connected reductive subgroup. In case $\mk g\cong \mk{sl}_m$ assume in addition that $V\cong V^*$ as a $\tilde G$-module. Denote $k:=\lfloor (m-1)/2\rfloor - \#\tilde\Delta^+$, and $k=3-\#\tilde\Delta^+$ for the special case of $\mk g\cong\mk{so}_5$.

If $0<k$, then $k(\iota)\geq k$ and property (A-$k$) holds for $\iota$, so that
$$
\mc{LR}_0(\iota)=\ol{\mc C_k(\iota)}=\Lambda_\RR^+ \;.
$$
In particular, if $\dim V\geq\dim \tilde{\mk g}$, then ${\rm rank}(\tilde{\mk g})\geq 3$ implies property (A) and ${\rm rank}(\tilde{\mk g})\geq 5$ implies property (M). 
\end{theorem}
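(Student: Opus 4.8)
\noindent\textit{Proof plan.}
By the definition of property (A-$k$) it suffices to prove the codimension estimate $k(\iota)\geq k$: since $k\geq1$ by hypothesis, the equalities $\mc{LR}_0(\iota)=\ol{\mc C_k(\iota)}=\Lambda_\RR^+$ then follow formally, as recalled in Section~\ref{Sect Settnmain}. The strategy is to run the argument behind Theorem~\ref{Theo tildeDelta ellK A} (the codimension formula of Theorem~\ref{Theo SeppTsan Xuslambda cod} and Lemma~\ref{Lemma ElCk}), using the $\tilde G$-action on the natural module $V$ to restrict the admissible destabilising one-parameter subgroups and thereby sharpen the bound in the cases where $\ell_\Delta$ is too small. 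The easy cases are $\mk g\cong\mk{so}_{2n+1}$, $\mk{sp}_{2n}$, and $\mk{so}_{2n}$ in the ranges where $\lfloor(m-1)/2\rfloor\le\ell_\Delta$: here $m=2n+1$ or $m=2n$, so $\lfloor(m-1)/2\rfloor$ equals $n$, $n-1$, $n-1$ respectively, which by Theorem~\ref{Theo ellDelta Classification} is $\le\ell_\Delta$ (equality for ${\bf B}_n$ and ${\bf D}_n$ with $n\neq5$, while $\ell_{{\bf C}_n}=n>n-1$); hence Theorem~\ref{Theo tildeDelta ellK A}(i) already gives $k(\iota)\ge\ell_\Delta-\#\tilde\Delta^+\ge\lfloor(m-1)/2\rfloor-\#\tilde\Delta^+=k$.

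The substantive case is $\mk g\cong\mk{sl}_m$, where $\ell_\Delta=1$ is useless for non-toral $\tilde G$ while $\ell_\Delta^{\rm sd}=\lfloor(m+1)/2\rfloor$. Here the hypothesis $V\cong V^*$ is exactly what is needed: it means $\tilde G$ preserves a non-degenerate bilinear form on $V$, and splitting it into symmetric and antisymmetric parts gives a $\tilde G$-stable decomposition $V=V_{\rm o}\oplus V_{\rm s}$ with $\tilde G\subseteq SO(V_{\rm o})\times Sp(V_{\rm s})\subseteq SL_m$ (using connectedness of $\tilde G$). Consequently every irreducible $G$-module $V_\lambda$, being a Schur functor of $V$, is self-dual as a $\tilde G$-module, and every one-parameter subgroup of $\tilde G$ acts on each $V_\lambda$ with negation-invariant weight multiset. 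Feeding this symmetry into the proof of Theorem~\ref{Theo tildeDelta ellK A} lets one run the argument of part (ii) --- which is precisely what upgrades $\ell_\Delta$ to $\ell_\Delta^{\rm sd}$ --- at the cost of a single unit, arising because the hypothesis $\tilde w_0=-1$ is now available only in the weaker ``self-dual $V_\lambda$'' form. This yields $k(\iota)\ge\ell_\Delta^{\rm sd}-1-\#\tilde\Delta^+=\lfloor(m-1)/2\rfloor-\#\tilde\Delta^+=k$. Making this unit-loss bookkeeping precise is the principal obstacle of the proof.

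There remain the low-rank orthogonal types in which the bound to be proved still exceeds $\ell_\Delta$, the discrepancy being caused by half-spin (or similarly small) fundamental weights. The notable instance is $\mk g\cong\mk{so}_{10}$, where $\ell_{{\bf D}_5}=3<4=\lfloor(m-1)/2\rfloor$ because of the weights $\varpi_4,\varpi_5$, which are not Schur functors of $\CC^{10}$ and so lie outside the symmetry argument above; for exactly these two weights the missing unit is recovered by a sharper application of Lemma~\ref{Lemma ElCk}, giving $k(\iota)\ge4-\#\tilde\Delta^+=k$. The genuinely exceptional case is the rank-two type $\mk g\cong\mk{so}_5$, which is settled by direct inspection, using the isogeny $\mk{so}_5\cong\mk{sp}_4$ and the explicit combinatorics of the rank-two Weyl group; this verification is finite and may be carried out with the program of \cite{Compute}.

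Finally, the ``in particular'' statement is pure arithmetic. Writing $\tilde r=\mathrm{rank}\,\tilde{\mk g}$, we have $\dim\tilde{\mk g}=\tilde r+2\#\tilde\Delta^+$, so $m=\dim V\ge\dim\tilde{\mk g}$ gives $m-1\ge\tilde r-1+2\#\tilde\Delta^+$, whence $\lfloor(m-1)/2\rfloor\ge\#\tilde\Delta^+ +\lfloor(\tilde r-1)/2\rfloor$ and therefore $k\ge\lfloor(\tilde r-1)/2\rfloor$. For $\tilde r\ge3$ this gives $k\ge1$ and property (A); for $\tilde r\ge5$ it gives $k\ge2$ and property (M). (A reductive subgroup of $SO_5$ has rank at most $2$, so the $\mk{so}_5$ exception is irrelevant here, while for $\mk{so}_{10}$ one invokes the improved value $k=4-\#\tilde\Delta^+$ just obtained.)
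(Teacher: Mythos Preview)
Your overall structure is right, but the $SL_m$ case is overcomplicated and the ``unit loss'' you flag as the principal obstacle is a phantom. The paper's argument there is one line: the hypothesis $V\cong V^*$ as a $\tilde G$-module means every one-parameter subgroup $h\in\tilde\Gamma$ acts on $V=\CC^m$ with negation-invariant eigenvalue multiset, and for $G=SL_m$ that is \emph{precisely} the condition $h\in\Gamma_{\rm sd}$. Hence $\tilde\Gamma\subset\Gamma_{\rm sd}$. Now look at the proof of Theorem~\ref{Theo tildeDelta ellK A}(ii): the assumption $\tilde w_0=-1$ is invoked for one purpose only, namely to conclude $\tilde\Gamma\subset\Gamma_{\rm sd}$ (hence $\pm\mk R\subset\Gamma_{\rm sd}$), after which Lemma~\ref{Lemma ElCk} finishes the job. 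Since you already have $\tilde\Gamma\subset\Gamma_{\rm sd}$ directly, the conclusion $k(\iota)\ge\ell_\Delta^{\rm sd}-\#\tilde\Delta^+$ holds without modification. With $\ell_{{\bf A}_{m-1}}^{\rm sd}=\lfloor(m+1)/2\rfloor=\lfloor(m-1)/2\rfloor+1$, this actually gives $k(\iota)\ge k+1$, a unit \emph{stronger} than required.

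Your detour through a $\tilde G$-invariant bilinear form and a splitting $\tilde G\subseteq SO(V_{\rm o})\times Sp(V_{\rm s})$ is both unnecessary and not well-justified as stated: decomposing an arbitrary non-degenerate $\tilde G$-invariant form into its symmetric and antisymmetric parts need not produce non-degenerate pieces, so the factorisation does not follow immediately. Likewise, the passage to ``every $V_\lambda$ is self-dual as a $\tilde G$-module'' is irrelevant; only the action on $V$ itself matters for locating $h$ inside $\Gamma_{\rm sd}$. Your handling of the orthogonal and symplectic cases via Theorem~\ref{Theo tildeDelta ellK A}(i) matches the paper. You are right that ${\bf D}_5$ and $\mk{so}_5$ deserve separate mention, since $\ell_{{\bf D}_5}=3<4=\lfloor 9/2\rfloor$ and the $\mk{so}_5$ bound $k=3-\#\tilde\Delta^+$ exceeds $\ell_{{\bf B}_2}=2$; the paper's short proof does not single these out explicitly.
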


\begin{proof}
In case $G\cong SL_m$, the condition $V\cong V^*$ as $\tilde G$-module implies $\tilde\Gamma\subset\Gamma_{\rm sd}$, and puts us in a position to apply part (ii) of Theorem \ref{Theo tildeDelta ellK A}. We observe, from Theorem \ref{Theo ellDelta Classification}, that the values of $\ell_\Delta$ (resp. $\ell_\Delta^{\rm sd}$ for $SL_m$) for the classical groups satisfy $\ell_\Delta\geq\lfloor (m-1)/2\rfloor$. Therefore, $\ell_\Delta\geq \lfloor (m-1)/2\rfloor > \#\tilde\Delta^+$ (resp. $\ell_\Delta^{\rm sd}\geq \lfloor (m-1)/2\rfloor > \#\tilde\Delta^+$ for $SL_m$).

The last statement follows immediately, since $\dim\tilde{\mk g}=\dim\tilde{\mk t}+2\#\tilde\Delta^+$.
\end{proof}

\begin{example}
Let $\tilde{\mk g}$ be a simple Lie algebra of rank $n\geq 2$, and $V$ be a $\tilde{\mk g}$-module such that $V\cong V^*$ such that $\dim V<\dim\tilde{\mk g}$. Then, according to the isomorphism type of $\tilde{\mk g}$, $V$ is one of the following:
\begin{enumerate}
\item[${\bf A}_n$:] Irreducible modules: none in general; special cases: $V_{\varpi_{\frac{n+1}{2}}}$ for $n=3,5$.

Reducible modules: $(V_{\varpi_1}\oplus V_{\varpi_n})^{\oplus j}$ for $1\leq j<\frac{n+1}{2}$, $V_{\varpi_2}\oplus V_{\varpi_{n-1}}$;

special cases: $V_{\varpi_1}\oplus V_{\varpi_{\frac{n+1}{2}}}\oplus V_{\varpi_n}$ for $n=3,5$.
\item[${\bf B}_n$:] Irreducible modules: $V_{\varpi_1}$; special cases: $V_{\varpi_n} \;for\; n\leq 6$.

Reducible modules: $V_{\varpi_1}^{\oplus j} \;for\; 2\leq j<n$;

special cases: $V_{\varpi_n}^{\oplus 2}$ for $n\leq 4$; $V_{\varpi_1}\oplus V_{\varpi_n}$ for $n\leq 6$; $(V_{\varpi_1}\oplus V_{\varpi_n})^{\oplus2}$ for $n=4,5$.
\item[${\bf C}_n$:] Irreducible modules: $V_{\varpi_1}, V_{\varpi_2}$; special cases: $V_{\varpi_3}$ for $n\leq 3$.

Reducible modules: $V_{\varpi_1}^{\oplus j} \;for\; 2\leq j<n+\frac12$.

\item[${\bf D}_n$:] $(n\geq4)$. Irreducible modules: $V_{\varpi_1}$; special cases: $V_{\varpi_{n-1}},V_{\varpi_n}$ for $n= 4,6$.

Reducible modules: $V_{\varpi_1}^{\oplus j}$ for $2\leq j<n-\frac12$;

special cases: $V_{\varpi_1} \oplus V_{\varpi_{n-1}} , V_{\varpi_1} \oplus V_{\varpi_n},V_{\varpi_{n-1}}^{\oplus2} , V_{\varpi_n}^{\oplus2}$ for $n=4,6$,

$V_{\varpi_{n-1}} \oplus V_{\varpi_n}$ for $n\leq6$, $V\varpi_1\oplus V_{\varpi_{n-1}} \oplus V_{\varpi_n}$ for $n\leq5$.
\item[${\bf E}_6$:] Irreducible modules: none. Reducible modules: $V_{\varpi_1}\oplus V_{\varpi_6}$.
\item[${\bf E}_7$:] Irreducible modules: $V_{\varpi_1}\cong \CC^{56}$. Reducible modules: $V_{\varpi_1}^{\oplus2}$.
\item[${\bf E}_8$:] There are no such modules.
\item[${\bf F}_4$:] Irreducible modules: $V_{\varpi_1}\cong \CC^{26}$. Reducible modules: $V_{\varpi_1}^{\oplus2}$.
\item[${\bf G}_2$:] Irreducible modules: $V_{\varpi_1}$. Reducible modules: none.
\end{enumerate}
\end{example}

\subsection{$E_8$-subgroups}\label{Sect E8 Prop M}

It turns out that the simple group of type ${\bf E}_8$ possesses a special feature, which characterizes it among simple groups: property (M) holds for all its embeddings as a proper subgroup of a simple group. More generally, we have the following statement.

\begin{theorem}\label{Theo E8 all movable}
Suppose $\tilde G$ is of type ${\bf E}_8$ and let $\iota:\tilde G\subset G$ be an arbitrary embedding. Then $G$ has no simple factors to which $\tilde G$ projects surjectively if and only if property (M) holds and
$$
\ol{\rm Mov}^{\tilde G}(X)= \mc{LR}_0(\iota) = \Lambda_\RR^+ \;.
$$
In particular, property (M) holds for any embedding $\tilde G\subsetneq G$ with $G$ infinitesimally simple.
\end{theorem}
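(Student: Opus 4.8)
The plan is to reduce the statement to the numerical criterion of Theorem \ref{Theo tildeDelta ellK A}, part (i), applied factor by factor. First, since $\tilde G$ is of type ${\bf E}_8$, it is simple and $\#\tilde\Delta^+=120$; moreover ${\bf E}_8$ has no proper subgroup of type ${\bf E}_8$, and the only simple group into which ${\bf E}_8$ embeds surjectively is ${\bf E}_8$ itself. Write $G$ (up to isogeny, which does not affect any of the cones or the monoid $LR_0$) as a product of simple factors $G=G_1\times\dots\times G_r$ and decompose $\mc{LR}_0(\iota)$ accordingly: a weight $\lambda=(\lambda_1,\dots,\lambda_r)$ lies in $\mc{LR}_0(\iota)$ iff, for the projection $\iota_i:\tilde G\to G_i$ of $\iota$ composed with the $i$-th projection, the relevant invariant condition holds; this reduces property (A) and property (M) for $\iota$ to the conjunction of the same properties for the $\iota_i$. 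Thus it suffices to treat the case $G$ simple, with $\iota:\tilde G\subset G$ a proper embedding (i.e., not surjective).

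Next I would invoke Theorem \ref{Theo ellDelta Classification}: for $G$ simple and \emph{not} of type ${\bf E}_8$, one reads off $\ell_\Delta$ from the table, and in every such case one checks $\ell_\Delta > \#\tilde\Delta^+ + 1 = 121$ — wait, that is false for the small classical types, so the argument cannot be that crude. The correct route is: if $\tilde G$ of type ${\bf E}_8$ embeds \emph{properly} into simple $G$, then $G$ has rank at least $9$ and $\dim G > \dim{\bf E}_8 = 248$, which already forces $G$ to be of classical type ${\bf A}_n$, ${\bf B}_n$, ${\bf C}_n$ or ${\bf D}_n$ with $n$ large, or one checks directly there is no such proper embedding into an exceptional group. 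For the classical cases, apply Theorem \ref{Theo Classical}: with $V\cong\CC^m$ the natural module, the hypothesis to be verified is $\lfloor (m-1)/2\rfloor - \#\tilde\Delta^+ > 1$, together with $V\cong V^*$ as a $\tilde G$-module when $\mk g\cong\mk{sl}_m$. Since the smallest faithful ${\bf E}_8$-module has dimension $248$, we get $m\geq 248$, hence $\lfloor (m-1)/2\rfloor \geq 123 > 121 = \#\tilde\Delta^+ + 1$, giving property (M); and the adjoint module of ${\bf E}_8$ is self-dual, so the extra hypothesis for $\mk{sl}_m$ is automatic (indeed every ${\bf E}_8$-module is self-dual since $\tilde w_0 = -1$). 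This yields (M) in every case, hence also (A).

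For the converse direction: if $\tilde G$ \emph{does} project surjectively onto some simple factor $G_i$ of $G$, then for that factor $\iota_i$ is an isomorphism (as ${\bf E}_8$ has no proper quotient that is a nontrivial connected group), so $\mc{LR}_0(\iota_i)=\{0\}\cup\Lambda_{i,\RR}^{++}$... more precisely $V_{\lambda_i}^{\tilde G}=V_{\lambda_i}^{G_i}=0$ for all $\lambda_i\neq 0$, so $\mc{LR}_0(\iota)\cap(\{0\}\times\dots\times\Lambda_{i,\RR}^+\times\dots\times\{0\})=\{0\}$, whence $\mc{LR}_0(\iota)\neq\Lambda_\RR^+$ and neither (A) nor (M) holds. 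The main obstacle I anticipate is the bookkeeping in the first two paragraphs: verifying cleanly that a proper embedding of ${\bf E}_8$ forces $G$ to be classical of large rank (ruling out the finitely many exceptional ambient groups by a dimension/rank count), and checking that Theorem \ref{Theo Classical}'s self-duality hypothesis is genuinely satisfied — both are routine but must be stated carefully, since the whole argument hinges on the numerical slack $m\geq 248$ being large enough to beat $\#\tilde\Delta^+=120$ with room to spare for the $k=2$ (movability) threshold.
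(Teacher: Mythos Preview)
Your approach is essentially the paper's: reduce to simple $G$, rule out the exceptional ambient types by dimension, bound the natural representation by $m\geq 248$ since the smallest nontrivial ${\bf E}_8$-module is the adjoint, and then apply the numerical criterion (you via Theorem~\ref{Theo Classical}, the paper directly via Corollary~\ref{Coro all Move}(ii) and the bound $\ell_\Delta^{\rm sd}\geq\frac12\dim V-1$; these are the same computation, and your observation that $\tilde w_0=-1$ forces every ${\bf E}_8$-module to be self-dual is exactly what makes the $\mk{sl}_m$ case go through). Your treatment of the converse direction is correct and more explicit than the paper's, which simply asserts the reduction to simple $G$.

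One small correction: the factorwise ``iff'' you state for $\mc{LR}_0(\iota)$ is false as written --- for the diagonal $SL_2\subset SL_2\times SL_2$ one has $\mc{LR}_0(\iota)=\{(\lambda,\lambda)\}\supsetneq\{0\}=\mc{LR}_0(\iota_1)\times\mc{LR}_0(\iota_2)$. The reduction of properties (A) and (M) to the simple factors is nonetheless valid: for (A) use $\prod_i\mc{LR}_0(\iota_i)\subseteq\mc{LR}_0(\iota)$ together with restriction to a single factor; for (M) one has $X^{us}(\lambda)\subseteq\bigcup_i\pi_i^{-1}(X_i^{us}(\lambda_i))$, whence ${\rm cod}(\lambda)\geq\min_i{\rm cod}(\lambda_i)$. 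Alternatively, and closer to the paper's implicit reasoning, first strip off the simple factors on which $\tilde G$ acts trivially, then apply Corollary~\ref{Coro all Move}(ii) directly to the remaining product using $\ell_\Delta^{\rm sd}=\min_i\ell_{\Delta_i}^{\rm sd}$ from Corollary~\ref{Coro ellDeltaDual}(iv).
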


\begin{proof}
Since $\tilde G$ is of type ${\bf E}_8$ we have $\tilde w_0=-1$, and we can apply Theorem \ref{Theo tildeDelta ellK A},(ii) and more specifically Corollary \ref{Coro all Move},(ii) referring to the property (M). Thus it suffices to show that $\ell^{\rm sd}_\Delta>1+\#\tilde\Delta^+$ holds for any proper embedding $\iota:\tilde G\subset G$ into a simple group $G$. It is suitable to conduct the argument on the level of Lie algebras.

There are no embeddings of $\mk e_8$ into the other exceptional simple Lie algebras, so $\mk g$ must be classical. Any embedding into a classical Lie algebra comes together with the natural representation of $\mk g$, say $V$, the nontrivial $G$-module of smallest possible dimension. This dimension must exceed the smallest dimension of a nontrivial $\mk e_8$-module, which is obtained for the adjoint representation. Thus
$$
\dim V=\min\{\dim V_{\lambda}:\lambda\in\Lambda^+\setminus\{0\}\}\geq \min\{\dim \tilde V_{\tilde\lambda}:\tilde\lambda\in\tilde\Lambda^+\setminus\{0\}\}=\dim \mk e_8=8+2\#\tilde\Delta^+ \;.
$$
According to Theorem \ref{Theo ellDelta Classification} for all classical simple $\mk g$, the dimension of the natural representation and the number $\ell_\Delta^{\rm sd}$ satisfy $\ell_\Delta^{\rm sd}\geq \frac12\dim V - 1$. Hence
$$
\ell_\Delta^{\rm sd}\geq \frac12\dim V - 1 \geq 3+\#\tilde\Delta^+ > 1+\#\tilde\Delta^+ \;.
$$
This proves the statement.
\end{proof}

\subsection{The symplectic interpretation: projections of coadjoint orbits of compact groups}\label{Sect Compact}

In this paragraph, we recall the interpretation of $\mc{LR}_0(\iota)$ and $\mc{LR}(\iota)$ in terms of momentum maps, in a framework due to Heckman, \cite{Heckman82}, and translate our observations in this terminology. Connected complex reductive Lie groups are characterized as complexifications of compact connected Lie groups, and we have $G=K^\CC$, where $K$ is any maximal compact subgroup of $G$. The finite dimensional representation theories of $G$ and $K$ are well known to be equivalent. Any embedding of compact Lie groups complexifies to an embedding of complex reductive Lie groups, and conversely, any embedding of complex reductive Lie groups induces an embedding of compact groups, obtained by extending a maximal compact group of the subgroup to one of the ambient group. Thus our problems can be formulated entirely in terms of compact groups.

Let $\iota:\tilde G\subset G$ be an embedding connected reductive complex subgroup. Let $\tilde K$ and $K$ be respective maximal compact subgroups such that $\tilde K=\tilde G\cap K$. Let $\tilde{\mk g}=\tilde{\mk k}\oplus i\tilde{\mk k}\subset \mk k\oplus i\mk k=\mk g$ be the corresponding real form decompositions of the Lie algebras. Let $\tilde T\subset\tilde K$ and $T\subset K$ be maximal tori such that $\tilde T=\tilde K\cap T$. Let $\tilde H=\tilde T^\CC\subset T^\CC=H$ be the corresponding Cartan subgroups of complex groups. Let $\langle,\rangle$ be a $K$-invariant nondegenerate bilinear form on $\mk k$ extending the Killing form on the semisimple part; we use the same notation for the induced scalar product on $i\mk k$ and $i\mk k^*$. We have a natural inclusion of the weight lattice $\Lambda\subset i\mk t^*\subset i\mk k^*$ and $\Lambda_\RR=i\mk t^*$. Let $\Lambda^+$ be a Weyl chamber and $B\subset G$ be a Borel subgroup. 

Every (imaginary) coadjoint $K$-orbit in $i\mk k^*$ intersects $\Lambda_\RR^+$ at a single point and $\Lambda_\RR$ at a Weyl group orbit. We denote by $K\lambda\subset i\mk k^*$ the orbit of $\lambda\in\Lambda_\RR^+$ and by $K_\lambda=Z_K(\lambda)$ the isotropy group. The K\"ahler form of Kostant-Kirillov-Souriau makes $K\lambda=K/K_\lambda$ into a complex $G$-manifold, isomorphic as such to the flag variety $G/P_\lambda$. For integral $\lambda$ the K\"ahler structure is Hodge and defines the embedding corresponding to the line bundle $\mc L_\lambda$:
$$
K\lambda\cong K/K_\lambda \cong K[v_\lambda]=G[v_\lambda]=X_\lambda\subset \PP(V_\lambda) \;.
$$

Let $\mu:i\mk k^*\to i\tilde{\mk k}^*$ be the projection induced by $d\iota^*$. Then, for any fixed $\lambda\in\Lambda_\RR^+$, the restriction 
$$
\mu_{\vert_{K\lambda}} : K\lambda \to i\tilde{\mk k}^*
$$
is a $\tilde K$-equivariant momentum map. Let $\mc M(\lambda)=\mc M_{\tilde K}(K\lambda)=K\lambda\cap \mu^{-1}(0)$ be the $0$-fiber, also called the Kempf-Ness set. The quotient space $\mc M(\lambda)/\tilde K$ is the symplectic reduction of $K\lambda$ by the $\tilde K$-action, which is a K\"ahler space. For integral $\lambda\in\Lambda^+$, the symplectic reduction of $K\lambda$ is isomorphic to the projective variety obtained as the complex GIT-quotient $Y_\lambda=(X_\lambda)_{\tilde G}^{ss}//\tilde G$. The map between the two spaces is naturally defined by the orbit structures, as every semistable $\tilde G$-orbit intersects $\mc M(\lambda)$ along a single $\tilde K$-orbit (cf. \cite{Kirwan},\cite{Ness-StratNullcone}) and so
$$
(X_\lambda)_{\tilde G}^{ss} = \{x\in X_\lambda: \ol{\tilde Gx}\cap\mc M(\lambda) \}\;.
$$
Thus, for nonzero integral $\lambda\in\Lambda^+\setminus\{0\}$, the existence of nonconstant elements in the invariant ring $\mc R(\lambda)^{\tilde G}$ is equivalent to $\mc M(\lambda)\ne\emptyset$. The Kempf-Ness set for coadjoint orbits can be expressed as $\mc M(\lambda)=K\lambda\cap i\tilde{\mk k}^\perp$. The latter is well defined for any $\lambda\in\Lambda_\RR^+$ and the $0$-eigencone can be written as
$$
\mc{LR}_0(\iota)=\{\lambda\in\Lambda_\RR^+:\mc M(\lambda)\ne \emptyset\} = \Lambda_\RR^+\cap Ki{\mk k}^\perp \;.
$$
Thus our property (A) demanding $\mc{LR}_0(\iota)=\Lambda_\RR^+$ can be formulated in this context as the property (A'), for embeddings $\iota:\tilde K\subset K$ of compact connected Lie groups, given below. For simplicity of expression we refer to the adjoint representation, which is isomorphic over $K$ to the imaginary coadjoint.\\

\noindent{\bf Property (A'):} The orthogonal projection of every adjoint $K$-orbit in $\mk k$ to the subalgebra $\tilde{\mk k}$ contains $0$. Equivalently, $\mc M_{\tilde K}(Kx)=Kx\cap \tilde{\mk k}^\perp\ne\emptyset$ for every $x\in\mk k$. Equivalently, $K\tilde{\mk k}^\perp =\mk k$.\\

%We make the assumption that $\mk k$ is semisimple. The generalization to the reductive case requires technical complications inessential to the argument. In fact, one would have to consider at each step the projection of the center of $\tilde{\mk k}$ to the center of $\mk k$, and impose parallel conditions for the resulting embedding of compact tori.

%The GIT chamber condition for $\lambda$ is translated into the condition for all $\tilde K$-isotropy groups of points in $\mc M(\lambda)$ to be finite. When this holds, we have $(X_\lambda)_{\tilde G}^{ss}=\tilde G\mc M(\lambda)$.

\section{Proof of the criterion}\label{Sect Proof of Main}

In this section we supply a proof of Theorem \ref{Theo tildeDelta ellK A}. In fact, the particular case of the statement concerning $\mc{LR}_0(\iota)$ can be derived in a fairly straightforward way from the known form of the inequalities describing the cone, as given in \cite{Beren-Sjam}, \cite{Ress-Eigencone}, \cite{Sepp-Tsan-2020}. Since we are also interested in the cones $\mc C_k(\iota)$, we adhere to the formalism of \cite{Sepp-Tsan-2020}. The key step is based on explicit formulae for $X^{us}_{\tilde G}(\lambda)$ and its codimension in $X$, denoted by ${\rm cod}_{\tilde G}(\lambda)$. These formulae, given here in Theorem \ref{Theo SeppTsan Xuslambda cod}, are derived from the Hilbert-Mumford criterion and the Kirwan-Ness stratification theorem. The general theory relevant for this setting is provided for instance in \cite{Ness-StratNullcone}, and is also reviewed for the case at hand in \cite{Ress-Eigencone}, \cite{Sepp-Tsan-2020}.\\

We need to introduce some notions and notation concerning the structure of $G$ and its flag variety $X=G/B$. Let $n={\rm rank}(G)$ be the rank of the root system $\Delta$ of $G$. Let $\Gamma=\Lambda^\vee\in\mk h$ denote the lattice of one-parameter subgroups (OPS) and let $\langle,\rangle$ denote the bilinear pairing between $\Lambda$ and $\Gamma$. Let $\Gamma^+$ be the Weyl chamber in $\Gamma$ dual to $\Lambda^+$. For $h\in\Gamma$, let $P_h\subset G$ denote the parabolic subgroup defined by $h$, that is, the subgroup whose Lie algebra $\mk p_h$ is spanned by $\mk h$ and the root spaces $\mk g_\alpha$ with $\langle\alpha,h\rangle\geq 0$.

The $H$-fixed points in $X$ are parametrized by the Weyl group, $X^H=\{x_w=\dot{w}B: w\in W\}$, where $\dot{w}\in N_G(H)$ is an arbitrary representative of $w\in W$. The $B$-orbits through the $H$-fixed points exhaust $X$ and form the Schubert cell decomposition $X=\sqcup_w Bx_w$. The length of an element $w\in W$ is defined as the dimension of $Bx_w$ and also characterized as follows:
$$
l(w)=\dim Bx_w = \#(\Delta\cap w^{-1}\Delta^-) = \min\{l\in\ZZ_{\geq 0}:\exists j_1,...,j_l\in\{1,...,n\},w=r_{j_1}...r_{j_n}\},
$$
where $r_1,...,r_n\in W$ are the simple reflections with respect to the chosen Weyl chamber generating $W$. The codimension of the Schubert cell $Bx_w$ is equal to the dimension of the orbit of the opposite Borel subgroup $B^-$ through $x_w$, and can be expressed as
$$
{\rm codim}_X Bx_w = \dim B^-x_w= l(ww_0)=l(w_0w) = l(w_0)-l(w)=\#\Delta^+-l(w)\;,
$$
where $w_0$ is the longest element of $W$, having $l(w_0)=\#\Delta^+$ and sending $\Gamma^+$ to $\Gamma^-=-\Gamma^+$.

More generally, for $h\in \Gamma\setminus \{0\}$, we let $\sigma_h\in W$ be an element of minimal length such that $h\in\sigma_h\Gamma^+$, or equivalently, $B^{\sigma_h}=\sigma_h B\sigma_h^{-1}\subset P_h$. We define 
the $h$-length of $w\in W$ as the dimension of the $B^{\sigma_h}$-orbit through $x_{w\sigma_h}$:
$$
l_{h}(w)=\dim B^{\sigma_h} x_{w\sigma_h}=l(\sigma_h^{-1}w\sigma_h) \;.
$$
For a pair $(h,w)\in\Gamma\times W$, we consider the parabolic orbit $P_hx_{w\sigma_h}=\sqcup_{w'\in W_h w} B^{\sigma_h} x_{w\sigma_h}$. The dimension and codimension of this orbit are denoted by
\begin{align}\label{For cod h w}
p_h(w)=\dim P_hx_{w\sigma_h} & = \max\{l_h(w'): w'\in W_h w\} \;,\\
{\rm cod}_h(w)={\rm codim}_X P_hx_{w\sigma_h} & = \#\Delta^+-p_h(w)=\min\{l_{-h}(w'):w'\in W_hw\} \;.
\end{align}

For a pair $(\lambda,h)\in \Lambda^+\times(\Gamma\setminus\{0\})$, $\sigma_h\lambda$ and $h$ are dominant with respect to the same Borel subgroup. We consider the following partition of the Weyl group
\begin{align*}
& W=W^+(\sigma_h\lambda,h)\sqcup W^0(\sigma_h\lambda,h)\sqcup W^-(\sigma_h\lambda,h) \;,\\
& W^+(\sigma_h\lambda,h)= \{w\in W: \langle w\sigma_h\lambda,h\rangle>0\} \;,\\
& W^0(\sigma_h\lambda,h)= \{w\in W: \langle w\sigma_h\lambda,h\rangle=0\} \;,\\
& W^-(\sigma_h\lambda,h)= \{w\in W: \langle w\sigma_h\lambda,h\rangle<0\} \;.
\end{align*}

\begin{definition}
For $h\in\Gamma\setminus\{0\}$, we define the function
$$
\ell_h^-: \Lambda^+ \to \ZZ_{\geq 0} , \ell_h^-(\lambda)=\min\{l\in\ZZ_{\geq 0}:l=l_h(w), w\in W^-(\sigma_h\lambda,h)\} \;,
$$
and denote its minimum on nonzero dominant weights by
$$
\ell^h = \min\{\ell_h^-(\Lambda^+\setminus\{0\})\} \;.
$$
\end{definition}

The invariant $\ell_\Delta$ defined in (\ref{For Def ellK}) satisfies:
\begin{gather*}
\ell_\Delta = \min\{ \ell^h: h\in\Gamma\setminus\{0\}\} =\min\{ \ell^h: h\in\Gamma^+\setminus\{0\}\}  \;.
\end{gather*}
As for the case of weights, the dual element to $h\in\Gamma^+$ is the element $h^*=-w_0h\in\Gamma^+$ and $\Gamma_{\rm sd}^+=\{h\in\Gamma^+:h=h^*\}$ denotes the set of self-dual dominant elements. We also denote by $\Gamma_{\rm sd}=W\Gamma^+$ the set consisting of the elements self-dual with respect to any Weyl chamber to which they belong. We have
$$
\ell^{\rm sd}_\Delta = \min\{ \ell^h: h\in\Gamma_{\rm sd}\setminus\{0\}\} =\min\{ \ell^h: h\in\Gamma_{\rm sd}^+\setminus\{0\}\} \;.
$$

Given an embedding $\iota:\tilde G\subset G$, we choose Cartan and Borel subgroups, so that $\tilde H=\tilde G\cap H$, $\tilde B=\tilde G\cap B$, and furthermore $\tilde\Gamma_\RR^+\cap\Gamma_\RR^+$ contains an open subset of $\tilde\Gamma_\RR$; such Weyl chambers are called compatible. Note that $\tilde\Gamma\subset\Gamma$, but a Weyl chamber of $\tilde G$ is not necessarily contained in a Weyl chamber of $G$. For $h\in\tilde\Gamma^+\setminus\{0\}$, the element $\sigma_h$ is to be chosen with the supplementary condition that $\sigma_h\Gamma^+$ and $\tilde\Gamma^+$ are compatible Weyl chambers and furthermore $\sigma_h$ is of minimal length in its coset by the Weyl group of $Z_G(\tilde T)$. (See \cite{Beren-Sjam}, \cite{Sepp-Tsan-2020} for more detail on the elements $\sigma_h$ and the related notion of cubicles. The requirements for $\sigma_h$ ensure that the notions of dual and self-dual elements in $\tilde\Gamma$ and $\Gamma$ are compatible). 

\begin{definition}\label{Def RessElt}
Let $\mk P(\iota)=\{P_h: h\in\tilde\Gamma^+\setminus\{0\}\}$ be the set of parabolic subgroups of $G$ defined by nonzero OPS of $\tilde H$. Let $\{P_1,...,P_m\}=\mk P_{\rm max}(\iota)$ be the set of maximal elements of $\mk P(\iota)$ with respect to inclusion. It is shown in \cite{Sepp-Tsan-2020} that there exist unique indivisible (i.e. $\frac{1}{k}h\notin \Gamma$ for $k>1$) elements $h_1,...,h_m\in\tilde\Gamma^+$, such that $P_j=P_{h_j}$ for $j=1,...,m$. We denote $\mk R=\{h_1,...,h_m\}$, these element are called the Ressayre one-parameter subgroups for the embedding $\iota$ (see \cite{Ress-Eigencone} for the original definition). We denote by $\sigma_j=\sigma_{h_j}$ the corresponding Weyl group elements.
\end{definition}

The formula for the $\tilde G$-unstable locus in $X$ presented below involves varieties of the form $\tilde GP_{h}x_{w\sigma_h}$. Note that $\tilde GP_{h}x_{w\sigma_h}$ is the target of a natural $\tilde G$-equivariant surjective map from the homogeneous vector bundle $\tilde G\times_{\tilde P_h} P_h x_{w\sigma_h}$ on $\tilde G/\tilde P_h$:
$$
\tilde G\times_{\tilde P_h} P_h x_{w\sigma_h} \to \tilde GP_h x_{w\sigma_h}\;,\; [g,px_{w\sigma_h}]=gpx_{w\sigma_h}\;.
$$
Recalling that $\#\tilde\Delta_h^+=\dim \tilde G/\tilde P_h$, we obtain
$$
{\rm cod}_{\tilde G}(h,w)={\rm codim}_X \tilde GP_hx_{w\sigma_h}\geq {\rm cod}_h(w)-\#\tilde\Delta_h^+\;. 
$$

\begin{definition}
A pair $(h,w)\in\tilde\Gamma^+\times W$ is called fit, if $\dim\tilde GP_hx_{w\sigma_h}=\dim\tilde G/\tilde P_h+\dim P_hx_{w\sigma_h}$. With the above notation this condition is expressed as
$$
{\rm cod}_{\tilde G}(h,w)={\rm cod}_h(w)-\#\tilde\Delta_h^+\;.
$$
We denote by $\mk{F}$ the set of fit pairs with the supplementary condition that $h$ is indivisible and $w$ is of maximal $h$-length in $W_hw$. Thus for $(h,w)\in\mk{F}$, we have ${\rm cod}_{\tilde G}(h,w)=\#\Delta^+-l_h(w)-\#\tilde\Delta_h^+$. We denote by
$$
\mk{FR} = \mk{F}\cap (\mk R\times W)
$$
the set of fit pairs where the one-parameter subgroup is a Ressayre element as in Definition \ref{Def RessElt}.
\end{definition}

\begin{rem}
There is an obvious necessary condition for a pair $(h,w)\in\tilde\Gamma^+\times W$ to be fit, namely, $\dim P_hx_{w\sigma_h}+\dim\tilde G/\tilde P_h\leq \dim X$. For $SL_2$-subgroups, as well as for tori, this condition is also sufficient.
\end{rem}

For $\lambda\in\Lambda^+$ we denote
\begin{align*}
& (\tilde\Gamma^+\times W)_\lambda^+=\{(h,w)\in\tilde\Gamma^+\times W: \langle w\sigma_h\lambda,h \rangle > 0\}\;,\\
& (\mk R\times W)_\lambda^+=(\mk R\times W)\cap(\tilde\Gamma^+\times W)_\lambda^+\;, \\
& \mk{FR}^+_\lambda=\mk{FR}\cap(\tilde\Gamma^+\times W)_\lambda^+\;.
\end{align*}

\begin{theorem}\label{Theo SeppTsan Xuslambda cod} (cf. \cite[Theorem 3.5, Corollary. 4.6]{Sepp-Tsan-2020})

Let $\lambda\in\Lambda^{++}$. The $\tilde G$-unstable locus in $X=G/B$ with respect to $\mc L_\lambda$ is given by the following expressions: 
$$
X^{us}_{\tilde G}(\lambda) = \bigcup\limits_{(h,w)\in(\tilde\Gamma^+\times W)_\lambda^+} \tilde GP_h x_{w\sigma_h} = \bigcup\limits_{j=1}^m \bigcup\limits_{w\in W:\langle w\sigma_h\lambda,h_j\rangle>0} \tilde GP_j x_{w\sigma_h} \;.
$$
The codimension ${\rm cod}_{\tilde G}(\lambda)={\rm codim}_X X^{us}_{\tilde G}(\lambda)$ is given by:
\begin{align*}
{\rm cod}_{\tilde G}(\lambda) &= \min\{ {\rm cod}_{\tilde G}(h_j,w): (h_j,w)\in\mk{FR}^+_\lambda  \} \\
 &= \min\{\#\Delta^+-\#\tilde\Delta_{h_j}-l_{h_j}(w):(h_j,w)\in\mk{FR}^+_\lambda\}\;.
\end{align*}
In particular, the inequality 
$$
{\rm cod}_{\tilde G}(\lambda)\geq \min\{\#\Delta^+-\#\tilde\Delta_{h}-l_{h}(w):(h_j,w)\in(\tilde\Gamma^+\times W)^+_\lambda\}
$$
holds.
\end{theorem}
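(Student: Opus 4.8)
\medskip
\noindent\emph{Proof plan.} The plan is to obtain both assertions in the standard way for GIT on flag varieties: from the Hilbert--Mumford numerical criterion for the $\tilde G$-action on $X=G/B$ linearized by the ample bundle $\mc L_\lambda$ ($\lambda\in\Lambda^{++}$), together with the Kirwan--Ness stratification of the unstable locus. The first step I would carry out is to reduce the criterion to one-parameter subgroups of the fixed torus $\tilde H$: a point $x\in X$ is $\tilde G$-unstable iff $\mu^{\mc L_\lambda}(x,\tau)>0$ for some one-parameter subgroup $\tau$ of $\tilde G$, and since every such $\tau$ is $\tilde G$-conjugate into $\tilde\Gamma$ while $\mu^{\mc L_\lambda}(gx,\tau)=\mu^{\mc L_\lambda}(x,g^{-1}\tau g)$, one gets $X^{us}_{\tilde G}(\lambda)=\bigcup_{h\in\tilde\Gamma^+}\tilde G\cdot X^{us}_h(\lambda)$, where $X^{us}_h(\lambda)$ is the locus destabilized by the single torus $h$; passing from $\tilde\Gamma$ to $\tilde\Gamma^+$ is legitimate by $\tilde W$-invariance after absorbing representatives of $\tilde W$ into the $\tilde G$-factor.

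Next I would compute $X^{us}_h(\lambda)$ for a fixed $h\in\tilde\Gamma^+\subset\Gamma$, a purely ``torus on a flag variety'' problem. Conjugating by $\sigma_h$ so that $h$ is $G$-dominant, the attracting cells of $h$ on $X$ (for $t\to 0$) are exactly the $P_h$-orbits $P_hx_{w\sigma_h}=\bigsqcup_{w'\in W_hw}B^{\sigma_h}x_{w'\sigma_h}$, and the Mumford function on the cell through $x_{w\sigma_h}$ equals, with the sign convention of the cited references, the $h$-weight $\langle w\sigma_h\lambda,h\rangle$ of the fibre of $\mc L_\lambda$ over $x_{w\sigma_h}$. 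Hence $X^{us}_h(\lambda)=\bigcup_{w:\langle w\sigma_h\lambda,h\rangle>0}P_hx_{w\sigma_h}$, and assembling over $h$ yields $X^{us}_{\tilde G}(\lambda)=\bigcup_{(h,w)\in(\tilde\Gamma^+\times W)^+_\lambda}\tilde GP_hx_{w\sigma_h}$. To reach the second displayed expression I would observe that $P_h\subseteq P_{h'}$ forces $\tilde GP_hx_{w\sigma_h}$ to sit inside some $\tilde GP_{h'}x_{w'\sigma_{h'}}$ with the weight inequality preserved, so only the maximal parabolics $P_{h_1},\dots,P_{h_m}$ --- the Ressayre one-parameter subgroups --- contribute; matching Schubert data across the various $\sigma_h$ is exactly the cubicle bookkeeping of \cite{Sepp-Tsan-2020}.

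For the codimension, $X^{us}_{\tilde G}(\lambda)$ is a finite union of the irreducible closed sets $\overline{\tilde GP_hx_{w\sigma_h}}$, so ${\rm cod}_{\tilde G}(\lambda)$ is the minimum of their codimensions. Each $\tilde GP_hx_{w\sigma_h}$ is the image of the homogeneous bundle $\tilde G\times_{\tilde P_h}P_hx_{w\sigma_h}$ over $\tilde G/\tilde P_h$, so ${\rm codim}_X\tilde GP_hx_{w\sigma_h}\geq\#\Delta^+-\#\tilde\Delta_h-p_h(w)$, with equality precisely for fit pairs; choosing in each $P_h$-orbit the representative of maximal $h$-length replaces $p_h(w)$ by $l_h(w)$. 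The substantive point is that the minimum is attained at a fit pair whose one-parameter subgroup is a Ressayre element, which is what the Kirwan--Ness stratification delivers: the optimal destabilizing vectors index the strata and, in the flag-variety case, produce fit Ressayre pairs, giving ${\rm cod}_{\tilde G}(\lambda)=\min\{\#\Delta^+-\#\tilde\Delta_{h_j}-l_{h_j}(w):(h_j,w)\in\mk{FR}^+_\lambda\}$. The final ``in particular'' inequality --- the only thing actually used downstream in this paper --- requires none of this: it is the bundle codimension estimate applied to every piece of the union.

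The part I expect to be the main obstacle is the reduction to fit Ressayre pairs: both the cubicle bookkeeping that lets one replace an arbitrary $h\in\tilde\Gamma^+$ by the finite list $h_1,\dots,h_m$, and the Kirwan--Ness input ensuring that a codimension-minimizing stratum is cut out by a fit pair, are the delicate points and the locus of the real work in \cite{Sepp-Tsan-2020}. By contrast, the description of $X^{us}_{\tilde G}(\lambda)$ as the union of the $\tilde GP_hx_{w\sigma_h}$ and the lower bound for ${\rm cod}_{\tilde G}(\lambda)$ follow fairly directly from Hilbert--Mumford and an elementary dimension count for homogeneous bundles.
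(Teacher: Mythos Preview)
The paper does not supply its own proof of this theorem: it is quoted from \cite[Theorem~3.5, Corollary~4.6]{Sepp-Tsan-2020} and used as a black box. So there is no ``paper's proof'' to compare your proposal against; your sketch is effectively a plausible reconstruction of the argument in the cited reference, and you yourself correctly flag that the two nontrivial ingredients---the cubicle bookkeeping reducing to the finite Ressayre list $h_1,\dots,h_m$, and the Kirwan--Ness input guaranteeing that a codimension-minimizing component arises from a fit pair---are where the real work of \cite{Sepp-Tsan-2020} lies.

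Your outline of the Hilbert--Mumford reduction to torus one-parameter subgroups, the identification of $X^{us}_h(\lambda)$ with a union of $P_h$-orbits indexed by $W^+(\sigma_h\lambda,h)$, and the homogeneous-bundle dimension estimate giving the final inequality are all in line with how the present paper summarizes the setup in Section~\ref{Sect Proof of Main}. In particular, you are right that only the concluding inequality is used downstream (in Lemma~\ref{Lemma ElCk}), and that this inequality needs only the elementary bundle estimate ${\rm cod}_{\tilde G}(h,w)\geq{\rm cod}_h(w)-\#\tilde\Delta_h^+$, not the sharper equality over $\mk{FR}^+_\lambda$.
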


\begin{theorem}\label{Theo SeppTsan Ck} (cf. \cite[Theorem 5.8]{Sepp-Tsan-2020})

For $k\in\NN$, the cone $\mc C_k(\iota)\subset C^{\tilde G}(G/B)\subset \Lambda_\RR^{++}$ associated to the embedding $\iota$ is described as
$$
\mc C_k(\iota) = \{\lambda\in\Lambda_\RR^{++}:\; \langle \lambda , \sigma_h^{-1}w^{-1} h \rangle\leq 0\;,\forall(h,w)\in\mk{FR},l_{h}(w)=\#\Delta^+-\#\tilde\Delta_{h}^+-k+1 \}\;.
$$
\end{theorem}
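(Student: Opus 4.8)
The plan is to read off the claimed description directly from the codimension formula of Theorem~\ref{Theo SeppTsan Xuslambda cod}. By definition $\lambda\in\mc C_k(\iota)$, for $\lambda\in\Lambda_\RR^{++}$, precisely when ${\rm cod}_{\tilde G}(\lambda)\geq k$, and by that theorem ${\rm cod}_{\tilde G}(\lambda)=\min\{\#\Delta^+-\#\tilde\Delta_{h}^+-l_{h}(w):(h,w)\in\mk{FR}^+_\lambda\}$. Hence $\lambda\in\mc C_k(\iota)$ if and only if every fit pair $(h,w)\in\mk{FR}$ with $\langle w\sigma_h\lambda,h\rangle>0$ satisfies $l_{h}(w)\leq\#\Delta^+-\#\tilde\Delta_{h}^+-k$. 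Since the pairing between $\Lambda$ and $\Gamma$ is $W$-invariant we may rewrite $\langle w\sigma_h\lambda,h\rangle=\langle\lambda,\sigma_h^{-1}w^{-1}h\rangle$, so the condition reads: for every $(h,w)\in\mk{FR}$ with $l_{h}(w)\geq\#\Delta^+-\#\tilde\Delta_{h}^+-k+1$ one has $\langle\lambda,\sigma_h^{-1}w^{-1}h\rangle\leq0$. The heart of the matter is that it suffices to impose these inequalities only at the boundary value $l_{h}(w)=\#\Delta^+-\#\tilde\Delta_{h}^+-k+1$, i.e. for the fit pairs whose stratum $\tilde GP_hx_{w\sigma_h}$ has codimension exactly $k-1$ in $X$.

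One inclusion is immediate. If $\lambda\in\mc C_k(\iota)$ and $(h,w)\in\mk{FR}$ has $l_{h}(w)=\#\Delta^+-\#\tilde\Delta_{h}^+-k+1$, then $\tilde GP_hx_{w\sigma_h}$ has codimension $k-1<{\rm cod}_{\tilde G}(\lambda)$, hence cannot be contained in $X^{us}_{\tilde G}(\lambda)$; by the description of the unstable locus in Theorem~\ref{Theo SeppTsan Xuslambda cod} this forces $\langle w\sigma_h\lambda,h\rangle\leq0$, i.e. $\langle\lambda,\sigma_h^{-1}w^{-1}h\rangle\leq0$. For the converse, suppose $\lambda\in\Lambda_\RR^{++}$ satisfies all the boundary inequalities but that some $(h,w)\in\mk{FR}^+_\lambda$ has $l_{h}(w)>\#\Delta^+-\#\tilde\Delta_{h}^+-k+1$. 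We descend: pick $w'$ with $l_{h}(w')=l_{h}(w)-1$ obtained from $w$ by one step down in the ($h$-twisted) Bruhat order on $W_h$-cosets; geometrically $\tilde GP_hx_{w'\sigma_h}$ lies in the closure of $\tilde GP_hx_{w\sigma_h}$. The crucial point is that this step preserves positivity: writing $v=\sigma_h^{-1}w\sigma_h$, $v'=\sigma_h^{-1}w'\sigma_h$ and $\mu=\sigma_h^{-1}h$ (which lies in $\Gamma^+$ by the choice of $\sigma_h$, so $\langle\alpha,\mu\rangle\geq0$ for every $\alpha\in\Delta^+$), one has $\langle w\sigma_h\lambda,h\rangle=\langle v\lambda,\mu\rangle$ and $\langle w'\sigma_h\lambda,h\rangle=\langle v'\lambda,\mu\rangle$; since $v'<v$ in Bruhat order and $\lambda$ is dominant, $v'\lambda-v\lambda$ is a nonnegative multiple of a positive root, whence $\langle v'\lambda,\mu\rangle\geq\langle v\lambda,\mu\rangle>0$. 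Iterating, we arrive at a fit pair $(h,w'')\in\mk{FR}^+_\lambda$ with $l_{h}(w'')=\#\Delta^+-\#\tilde\Delta_{h}^+-k+1$, contradicting the hypothesis on $\lambda$. The two inclusions together give the asserted equality.

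The main obstacle is the bookkeeping swept into the phrase ``one step down in the $h$-twisted Bruhat order'': one must arrange that the descended element can be chosen so that $(h,w')$ is again \emph{fit} and $w'$ is of maximal $h$-length in $W_hw'$, so that $(h,w')$ genuinely lies in $\mk{FR}$; for a general reductive $\tilde G$ fitness can fail, so this is not formal. It is precisely here that the structural machinery of \cite{Sepp-Tsan-2020} is needed: the cubicle decomposition governing the elements $\sigma_h$, the dimension estimate ${\rm codim}_X\tilde GP_hx_{w\sigma_h}\geq{\rm cod}_h(w)-\#\tilde\Delta_{h}^+$ together with the precise criterion for equality (fitness), and the fact that ${\rm cod}_{\tilde G}(\cdot)$ changes by at most one across a wall of the fan of GIT-classes --- so that the walls of $\mc C_k(\iota)$ are supported exactly on the hyperplanes $\langle\lambda,\sigma_h^{-1}w^{-1}h\rangle=0$ arising from fit pairs of $h$-length $\#\Delta^+-\#\tilde\Delta_{h}^+-k+1$, and the descent can always be continued down to that value. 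Granting these facts from \cite{Sepp-Tsan-2020}, the remainder is the elementary monotonicity of Weyl orbits along the Bruhat order used above.
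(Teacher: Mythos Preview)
The paper does not prove this theorem: it is quoted verbatim from \cite[Theorem~5.8]{Sepp-Tsan-2020} and used as a black box, just like the companion Theorem~\ref{Theo SeppTsan Xuslambda cod}. There is therefore no ``paper's own proof'' against which to compare your attempt.

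As for your sketch on its own terms: the easy inclusion $\mc C_k(\iota)\subseteq\{\lambda:\langle\lambda,\sigma_h^{-1}w^{-1}h\rangle\leq0,\ (h,w)\in\mk{FR},\ l_h(w)=\#\Delta^+-\#\tilde\Delta_h^+-k+1\}$ is correct and follows exactly as you say from Theorem~\ref{Theo SeppTsan Xuslambda cod}. The monotonicity step is also fine once one uses a \emph{right} descent on $v=\sigma_h^{-1}w\sigma_h$: if $v'=vr_j$ with $l(v')=l(v)-1$ then $v\alpha_j\in\Delta^-$, so $v'\lambda-v\lambda=-\langle\lambda,\alpha_j^\vee\rangle v\alpha_j$ is a nonnegative combination of positive roots since $\lambda$ is dominant, whence $\langle v'\lambda,\mu\rangle\geq\langle v\lambda,\mu\rangle$ for $\mu=\sigma_h^{-1}h\in\Gamma^+$.

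The genuine gap is exactly the one you flag yourself: there is no reason that a Bruhat descent of $w$ keeps $(h,w')$ in $\mk{FR}$. Fitness is a transversality condition on the $\tilde G$-action and can fail for intermediate coset representatives; nothing in the elementary Bruhat combinatorics forces it. Your final paragraph concedes this and appeals back to \cite{Sepp-Tsan-2020} for the missing ingredient (that ${\rm cod}_{\tilde G}$ drops by at most one across a wall, so the facets of $\mc C_k$ are supported on hyperplanes coming from fit pairs of codimension exactly $k-1$). But that is precisely the substance of the theorem you are trying to prove, so the argument is circular as written. In short: the reduction to Theorem~\ref{Theo SeppTsan Xuslambda cod} plus monotonicity gets you the trivial inclusion and a heuristic for the other, but the actual content --- that the boundary inequalities suffice --- lives in \cite{Sepp-Tsan-2020} and is not reproved here, neither by the paper nor by you.
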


In view of these results, the proof of Theorem \ref{Theo tildeDelta ellK A} is a matter of several elementary steps which we now present.

\begin{proof}[Proof of Theorem \ref{Theo tildeDelta ellK A}]

First remark that $k(\iota)\geq 0$ by definition, so the statement of part (i) of the theorem is redundant in case $\ell_\Delta\leq \#\tilde\Delta^+$. Analogously, if $\ell^{\rm sd}_\Delta\leq \#\tilde\Delta^+$, then part (ii) is redundant.

We construct an auxiliary family of cones related to the cones $\mc C_k(\iota)$. For any subset $\Xi\subset \Gamma\setminus\{0\}$, we consider the following cones in the Weyl chamber $\Lambda_\RR^+$:
$$
\mc E_l(\Xi)=\{\lambda\in\Lambda_\RR^+: l\leq\ell^-_h(\lambda), h\in\Xi\} \;,\; l\in \NN\;.
$$
These cones form a descending sequence $\Lambda^+_\RR=\mc E_1(\Xi)\subset\mc E_2(\Xi)\supset...$ which eventually hits $\{0\}$ (this follows from \cite[Lemma 4.1]{Sepp-Tsan-2020}). Also note that $\mc E_l(\Xi_1)\subset\mc E_l(\Xi_2)$, whenever $\Xi_2\subset\Xi_1$. 

The numbers $\ell_\Delta$ and $\ell^{\rm sd}_\Delta$ are characterized by the property that
$$
\mc E_{\ell_\Delta}(\Gamma\setminus\{0\})=\Lambda_\RR^+ \quad,\quad \mc E_{\ell^{\rm sd}_\Delta}(\Gamma_{\rm sd}\setminus\{0\})=\Lambda_\RR^+
$$
and $\ell_\Delta$ (resp. $\ell^{\rm sd}_\Delta$) is the maximal $l$ for which $\mc E_{l}(\Gamma\setminus\{0\})$ (resp. $\mc E_{l}(\Gamma_{\rm sd}\setminus\{0\})$) is equal to the entire Weyl chamber.

\begin{lemma}\label{Lemma ElCk}
For $l\geq\#\tilde\Delta^+$, if $\Lambda^{++}\cap \mc E_{l}(-\mk R)\ne \emptyset$, then
$$
\mc E_{l}(-\mk R)\subset \ol{\mc C}_{l-\#\tilde\Delta^+}(\iota)\;.
$$
\end{lemma}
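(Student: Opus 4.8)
The goal is to compare the auxiliary cone $\mc E_l(-\mk R)$ with the GIT-cone $\mc C_{l-\#\tilde\Delta^+}(\iota)$, using the explicit description of the latter in Theorem \ref{Theo SeppTsan Ck} and the codimension estimate in Theorem \ref{Theo SeppTsan Xuslambda cod}. I would first fix $\lambda\in\Lambda^{++}\cap\mc E_l(-\mk R)$ and aim to show $\lambda\in\mc C_{l-\#\tilde\Delta^+}(\iota)$, i.e.\ ${\rm cod}_{\tilde G}(\lambda)\geq l-\#\tilde\Delta^+$; passing to the closure and then to all of $\mc E_l(-\mk R)$ will follow by continuity since $\mc E_l(-\mk R)$ is a rational polyhedral cone whose interior (inside $\Lambda_\RR^{++}$, by the nonemptiness hypothesis) is dense in it. So the heart of the matter is the pointwise inequality for strictly dominant $\lambda$.

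\textbf{The codimension bound.} By the last inequality in Theorem \ref{Theo SeppTsan Xuslambda cod},
$$
{\rm cod}_{\tilde G}(\lambda)\geq \min\{\#\Delta^+-\#\tilde\Delta_{h}^+-l_{h}(w):(h,w)\in(\tilde\Gamma^+\times W)^+_\lambda\}\;,
$$
and in fact the exact value is the same minimum taken over $\mk{FR}^+_\lambda\subset(\mk R\times W)\cap(\tilde\Gamma^+\times W)^+_\lambda$. I would therefore bound, for each $h_j\in\mk R$ and each $w\in W$ with $\langle w\sigma_{h_j}\lambda,h_j\rangle>0$, the quantity $\#\Delta^+-\#\tilde\Delta_{h_j}^+-l_{h_j}(w)$ from below by $l-\#\tilde\Delta^+$. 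Since $\#\tilde\Delta_{h_j}^+\leq\#\tilde\Delta^+$, it suffices to show $\#\Delta^+-l_{h_j}(w)\geq l$, equivalently $l_{-h_j}(w)\geq l$ (using $\#\Delta^+-l_h(w)=l_{-h}(w)$, which is the identity behind formula (\ref{For cod h w}); more precisely one uses the $W_h$-invariant form, but since in $\mk{FR}$ the element $w$ has maximal $h$-length in $W_hw$ the relevant quantity is exactly $l_{-h}(w)={\rm cod}_h(w)$). Now the condition $\langle w\sigma_{h_j}\lambda,h_j\rangle>0$ says precisely that $\langle \sigma_{h_j}\lambda, \sigma_{h_j}^{-1}w^{-1}h_j\rangle>0$; writing $h'=\sigma_{h_j}^{-1}w^{-1}h_j$, this is $\langle\sigma_{h_j}\lambda,h'\rangle>0$ with $h'\in W(-h_j)\cdot(\text{something})$ — the key point is that $w'=\sigma_{h_j}^{-1}w\sigma_{h_j}$ lies in $W^-(\sigma_{h_j}\lambda,-h_j)$ after the appropriate sign bookkeeping, so that $\ell^-_{-h_j}(\lambda)\leq l_{-h_j}(w)$ by the very definition of $\ell^-_h$. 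But $\lambda\in\mc E_l(-\mk R)$ means $l\leq\ell^-_{-h_j}(\lambda)$ for every $h_j\in\mk R$. Chaining these inequalities gives $l\leq\ell^-_{-h_j}(\lambda)\leq l_{-h_j}(w)$, hence $\#\Delta^+-l_{h_j}(w)\geq l$, hence ${\rm cod}_{\tilde G}(h_j,w)\geq l-\#\tilde\Delta^+$ for all relevant pairs, and taking the minimum, ${\rm cod}_{\tilde G}(\lambda)\geq l-\#\tilde\Delta^+$.

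\textbf{Anticipated obstacle.} The routine part is the chain of inequalities; the step that needs genuine care is the translation between the ``positivity'' condition $\langle w\sigma_h\lambda,h\rangle>0$ appearing in $(\tilde\Gamma^+\times W)^+_\lambda$ and membership of the conjugated Weyl element in $W^-(\sigma_h\lambda,\pm h)$, together with the matching between $l_h$ and $l_{-h}$ via $\#\Delta^+$. One must check that $\sigma_h$ intertwines the $G$-side data ($\sigma_h\Gamma^+$, $\sigma_h\lambda$) with the reference chamber $\Gamma^+$ correctly, and that the relation $\ell_\Delta=\min\{\ell^h:h\in\Gamma\setminus\{0\}\}$ together with the definition $\ell^-_h(\lambda)=\min\{l_h(w):w\in W^-(\sigma_h\lambda,h)\}$ is applied to $-h_j$ rather than $h_j$ — this is exactly why $\mc E_l(-\mk R)$, with the minus sign, is the correct object. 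I would also remark that one should restrict attention to $\mk{FR}^+_\lambda$ (so that $w$ has maximal $h$-length in its $W_h$-coset, making ${\rm cod}_h(w)=l_{-h}(w)$ literally true), but since we only need a lower bound, working with the larger index set $(\tilde\Gamma^+\times W)^+_\lambda$ and the inequality form of Theorem \ref{Theo SeppTsan Xuslambda cod} sidesteps any subtlety there. Finally, the hypothesis $l\geq\#\tilde\Delta^+$ guarantees the conclusion is not vacuous, and the hypothesis $\Lambda^{++}\cap\mc E_l(-\mk R)\neq\emptyset$ is what lets us pass from strictly dominant $\lambda$ to the whole cone by density and continuity of ${\rm cod}_{\tilde G}(\cdot)$ from above.
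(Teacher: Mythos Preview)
Your proposal is correct and follows essentially the same route as the paper: fix $\lambda\in\Lambda^{++}\cap\mc E_l(-\mk R)$, and for each $(h,w)\in\mk{FR}^+_\lambda$ chain the inequalities ${\rm cod}_{\tilde G}(h,w)\geq{\rm cod}_h(w)-\#\tilde\Delta_h^+\geq\ell^-_{-h}(\lambda)-\#\tilde\Delta^+\geq l-\#\tilde\Delta^+$, then invoke Theorem~\ref{Theo SeppTsan Ck}. The paper's proof is terser (it cites formula~(\ref{For cod h w}) for the step ${\rm cod}_h(w)\geq\ell^-_{-h}(\lambda)$ rather than unpacking the $\sigma_h$-conjugation, and leaves the passage to the closure implicit), but the argument is the same.
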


\begin{proof}
Assume that $l\geq\#\tilde\Delta^+$ and $\mc E_{l}(-\mk R)\ne\{0\}$. Let $\lambda\in \Lambda^{++}\cap \mc E_{l}(-\mk R)\setminus\{0\}$. For any $(h,w)\in\mk{FR}_\lambda^+$ we have $\langle w\sigma_h\lambda,h\lambda\rangle>0$ and hence, by (\ref{For cod h w}), ${\rm cod}_h(w)\geq \ell^-_{-h}(\lambda)$. We obtain
$$
{\rm cod}_G(h,w)\geq {\rm cod}_h(w)-\#\tilde\Delta_h^+\geq \ell_{-h}^-(\lambda)-\#\tilde\Delta_h^+ \geq l-\#\tilde\Delta^+\;.
$$
Therefore, according to Theorem \ref{Theo SeppTsan Ck}, we have $\lambda\in\ol{\mk C}_{l - \#\tilde\Delta^+}(\iota)$, which proves the lemma.
\end{proof}

We can now prove the statements of the theorem. If $\ell_\Delta>\#\tilde\Delta^+$ holds, then Lemma \ref{Lemma ElCk} implies
$$
\Lambda_\RR^+=\mc E_{\ell_\Delta}(\Gamma\setminus\{0\})\subset \mc E_{\ell_\Delta}(-\mk R)\subset 
\ol{\mc C}_{\ell_\Delta-\#\tilde\Delta^+}(\iota) \;.
$$
This proves part (i).

Suppose now $\tilde{w}_0=-1$ holds for the group $\tilde G$. This means that all elements of $\tilde\Gamma^+$ are self dual, and hence $\tilde\Gamma^+\cap\Gamma^+\subset\Gamma^+_{\rm sd}$ for any choice of compatible Weyl chambers. Hence $\tilde\Gamma\subset \Gamma_{\rm sd}$ and in particular $\pm\mk R\subset \Gamma_{\rm sd}$. Now, if the inequality $\ell^{\rm sd}_\Delta>\#\tilde\Delta^+$ holds, then Lemma \ref{Lemma ElCk} implies
$$
\Lambda_\RR^+=\mc E_{\ell^{\rm sd}_\Delta}(\Gamma_{\rm sd}\setminus\{0\})\subset \mc E_{\ell^{\rm sd}_\Delta}(-\mk R)\subset \ol{\mc C}_{\ell^{\rm sd}_\Delta-\#\tilde\Delta^+}(\iota) \;.
$$
This proves part (ii) and completes the proof of the theorem.
\end{proof} %%%% OF THE MAIN THEOREM

\section{Proof of Theorem \ref{Theo ellDelta Classification}: values of $\ell_G$ and $\ell_G^{\rm sd}$}\label{Sect Compute}

Let $G$ be a semisimple complex Lie group with Lie algebra $\mk g$ and root system $\Delta$. Here we study the numerical invariants $\ell_\Delta$ and $\ell_\Delta^{\rm sd}$ defined in formula (\ref{For Def ellK}) in the introduction. In particular, we compute their values for simple root systems, as claimed in Theorem \ref{Theo ellDelta Classification}.

This section is independent from the rest of the article and is concerned solely with combinatorics of Weyl groups, roots and weights. In order to make the exposition independent, we briefly reintroduce the necessary notation:

\begin{enumerate}
\item[$\bullet$] $\Delta=\Delta^+\sqcup\Delta^-$ root system of rank $n$, split into a positive and a negative part.
\item[$\bullet$] $\alpha_1,...,\alpha_n\in\Delta^+$ simple roots.
\item[$\bullet$] ${\rm hi}:\Delta^+\to\NN$, ${\rm hi}(\sum m_j\alpha_j)=\sum m_j$, height function.
\item[$\bullet$] $\Delta^\vee=\{\alpha^\vee=\frac{2}{\langle\alpha,\alpha\rangle}\alpha:\alpha\in\Delta\}$, dual root system, with height function $\check{\rm hi}$.
\item[$\bullet$] $\Lambda$ weight lattice.
\item[$\bullet$] $\langle,\rangle$ scalar product on $\Lambda_\RR$ induced by the Killing form.
\item[$\bullet$] $\Lambda^+\subset\Lambda_\RR^+$ integral and real Weyl chambers corresponding to $\Delta^+$. 
\item[$\bullet$] $\varpi_1,...,\varpi_n\in\Lambda^+$ fundamental weights.
\item[$\bullet$] $\rho=\varpi_1+...+\varpi_n=\frac12\sum\limits_{\alpha\in\Delta^+} \alpha$, the smallest strictly dominant weight.
\item[$\bullet$] $W$ Weyl group generated by the simple root reflections $r_1,...,r_n$.
\item[$\bullet$] Given $\lambda\in\Lambda$ and an expression $w=r_{j_1}...r_{j_l}\in W$, we call the sequence of weights $\lambda_i=r_{j_{l-i+1}}...r_{j_l}\lambda$, for $i=1,...,l$, the $w$-trajectory of $\lambda$. If an expression is not specified in the discussion, the term refers to the trajectory of arbitrary expression for $w$.
\item[$\bullet$] Length function $l:W\to \ZZ$, $l(w)=\#(\Delta^+\cap w^{-1}\Delta^-)=\min\{l\in\ZZ_{\geq 0}: \exists j_1,...,j_l\in\{1,...,n\}, w=r_{j_1}...r_{j_l}\}$.
\item[$\bullet$] $W(l)=\{w\in W: l(w)=l\}$.
\item[$\bullet$] $w_0\in W$ longest element. We have $l(w_0)=\#\Delta^+$ and $l(w_0w)=l(ww_0)=l(w_0)-l(w)$ for all $w\in W$.
\item[$\bullet$] $\Lambda^+_{\rm sd}=\{\lambda\in\Lambda^+:\lambda=\lambda^*\}$, where $\lambda^*=-w_0\lambda$. Similarly for $\Gamma^+_{\rm sd}$.
\item[$\bullet$] For a pair $(\lambda,h)\in \Lambda_\RR^+\times\Lambda_\RR^+$ we define a partition of the Weyl group by
\begin{align*}
& W=W^+(\lambda,h)\sqcup W^0(\lambda,h)\sqcup W^-(\lambda,h) \;,\\
& W^+(\lambda,h)= \{w\in W: \langle w\lambda,h\rangle>0\} \;,\\
& W^0(\lambda,h)= \{w\in W: \langle w\lambda,h\rangle=0\} \;,\\
& W^-(\lambda,h)= \{w\in W: \langle w\lambda,h\rangle<0\} \;.
\end{align*}
\end{enumerate}

\begin{definition}\label{Def ellh}
For $h\in\Gamma^+\setminus\{0\}$, we denote
\begin{align*}
& \ell_h^-: \Lambda^+ \to \ZZ_{\geq 0} \;,\; \ell_h^-(\lambda)=\min\{l\in\ZZ_{\geq 0}:l=l(w), w\in W^-(\lambda,h)\} \;,\\
&\ell^h = \min\{\ell_h^-(\Lambda^+\setminus\{0\})\} \;.
\end{align*}
We define two numerical invariants of the root system $\Delta$ by
\begin{align*}
& \ell_\Delta = \min\{ \ell^h: h\in\Lambda^+\setminus\{0\}\} \;,\\
& \ell_\Delta^{\rm sd} = \min\{ \ell^h: h\in\Lambda^+_{\rm sd} \setminus\{0\}\} \;.
\end{align*}
\end{definition}

\begin{rem}\label{Rem h in Lambda}
The element $h$ above is in fact to be interpreted, via the Killing form, as a Lie algebra element in the coweight lattice $\Gamma=\Lambda^\vee\subset\mk h$, in order to match our considerations in the rest of the article. Here we choose to identify $\Lambda_\RR\cong\Gamma_\RR$ via the Killing form, while keeping the notation $h$ to indicate the two roles. This is convenient because it enables better formulation of the symmetry properties of the above partitions of $W$. It is also harmless, since $\Lambda_\RR^+\cong\Gamma_\RR^+$ and moreover the partition of $W$ remains the same if $h$ or $\lambda$ is replaced by a positive multiple. Hence $\ell_h^-$ and $\ell^h$ are also unchanged by positive rescaling of $h$. Note that the fundamental coweight $\varpi_j^\vee$ is a positive multiple of $\varpi_j$; this is important, since the values $\ell^-_{\varpi_j}(\varpi_k)$ and $\ell^{\varpi_j}$ play a key role in what follows. The distinction manifests for some elements, such as the principal semisimple element $\rho^\vee=\varpi_1^\vee+...+\varpi_n^\vee$, which is not proportional to $\rho$ if $\Delta\ncong \Delta^\vee$. Nonetheless, $\ell_\Delta=\ell_{\Delta^\vee}$ holds, as shown in the corollary below.
\end{rem}

\begin{example}\label{Exa shortroot}
Let $h=\beta\in \Delta\cap\Lambda^+$ be a dominant root, and suppose, without loss of generality, that $\Delta$ is simple.
\begin{enumerate}
\item[{\rm (i)}] If $\Delta$ is simply laced, then $\ell^\beta={\rm hi}(\beta)$.
\item[{\rm (i)}] If $\Delta$ is has two root lengths, then
$$
\ell^\beta=\begin{cases} {\rm hi}(\beta) \;\;,\;if\; \beta\; is\; short\;,\\
\check{\rm hi}(\beta^\vee) \;\;,\;if\; \beta\; is\; long\;. \end{cases}
$$
\end{enumerate}
To show this, note first that, for any dominant root $\beta$, $\ell^\beta$ is the length of the shortest Weyl group element sending $\beta$ to a negative root. Every simple reflection $r_j$ sends exactly one positive root, $\alpha_j$, to a negative. If $\alpha$ is a positive root, short if there are two root lengths, then the reflection coefficient $n_{\alpha_j,\alpha}$ has four possible values $0,1,-1,2$. The value $2$ occurs exactly for $\alpha=\alpha_j$, in which case $r_j(\beta)=-\alpha_j$. It follows that a Weyl group element $w$ of length at least ${\rm hi}(\alpha)$ is necessary to attain $w(\alpha)\in \Delta^-$. The existence of such an element is a standard fact, and the image $w(\alpha)$ is necessarily the negative of a simple root, the one corresponding to the last simple reflection in an expression for $w$. By applying this argument to $\beta$ we obtain statement (i) and statement (ii) for short $\beta$. The statement for long $\beta$ follows, since $\beta^\vee$ is short, and $\ell^{\beta}=\ell^{\beta^\vee}$.
\end{example}

\begin{lemma}\label{Lemma lminush min bj}
For any $h\in\Lambda_\RR^+\setminus\{0\}$, the following hold:
\begin{enumerate}
\item[{\rm (i)}] $\forall \lambda\in\Lambda_\RR^+$, the map $W^\varepsilon(\lambda,h)\to W^\varepsilon(h,\lambda)$, $w\mapsto w^{-1}$ is a length preserving bijection, for $\varepsilon\in\{+,0,-\}$;
\item[{\rm (ii)}] $\forall \lambda\in\Lambda_\RR^+$, $\ell^-_h(\lambda)=\ell^-_\lambda(h)$;
\item[{\rm (iii)}] $\forall\lambda_1,\lambda_2\in\Lambda^+,\; \ell_h^-(\lambda_1+\lambda_2)\geq \min\{\ell_h^-(\lambda_1),\ell_h^-(\lambda_2)\}$;
\item[{\rm (iv)}] $\ell^h=\min\{\ell_h^-(\varpi_j):j=1,...,n\}$.
\end{enumerate}
\end{lemma}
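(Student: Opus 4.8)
The plan is to deduce the four items in the order (i) $\Rightarrow$ (ii), then (iii), then (iv), since (ii) is essentially the symmetrization of (i) and (iv) will follow by combining (iii) with the definition of $\ell^h$. For part (i), the key observation is that $\langle w\lambda,h\rangle = \langle\lambda,w^{-1}h\rangle$ for all $w\in W$, because the scalar product is $W$-invariant. Hence $w\in W^\varepsilon(\lambda,h)$ if and only if $\langle\lambda,w^{-1}h\rangle$ has sign $\varepsilon$, i.e. $\langle w^{-1}h,\lambda\rangle$ has sign $\varepsilon$, i.e. $w^{-1}\in W^\varepsilon(h,\lambda)$. The map $w\mapsto w^{-1}$ is a bijection of $W$ onto itself, it carries $W^\varepsilon(\lambda,h)$ onto $W^\varepsilon(h,\lambda)$ by the above, and it is length-preserving because $l(w)=l(w^{-1})=\#(\Delta^+\cap w^{-1}\Delta^-)=\#(\Delta^+\cap w\Delta^-)$ (a standard fact about Weyl groups). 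This is straightforward. For part (ii): by definition $\ell^-_h(\lambda)=\min\{l(w):w\in W^-(\lambda,h)\}$ and $\ell^-_\lambda(h)=\min\{l(w):w\in W^-(h,\lambda)\}$; since the bijection of part (i) with $\varepsilon=-$ preserves length, the two minima coincide. (One should note the convention that $\min\emptyset=+\infty$, or rather that $W^-$ is nonempty precisely when the orbit $W\lambda$ is not contained in the closed cone $\mathrm{Cone}(\Delta^+)$ near $h$; the statement is vacuously fine if both sets are empty, which happens simultaneously by the bijection.)

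For part (iii), let $l=\ell^-_h(\lambda_1+\lambda_2)$ and pick $w\in W^-(\lambda_1+\lambda_2,h)$ with $l(w)=l$. Then $0>\langle w(\lambda_1+\lambda_2),h\rangle=\langle w\lambda_1,h\rangle+\langle w\lambda_2,h\rangle$, so at least one of the two summands is strictly negative, say $\langle w\lambda_i,h\rangle<0$. Then the same $w$ lies in $W^-(\lambda_i,h)$, whence $\ell^-_h(\lambda_i)\le l(w)=l$, giving $\min\{\ell^-_h(\lambda_1),\ell^-_h(\lambda_2)\}\le l=\ell^-_h(\lambda_1+\lambda_2)$. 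This is the inequality asserted.

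For part (iv), recall $\ell^h=\min\{\ell^-_h(\lambda):\lambda\in\Lambda^+\setminus\{0\}\}$. Since each $\varpi_j$ is a nonzero dominant weight, the right-hand side $\min_j \ell^-_h(\varpi_j)$ is an upper bound for $\ell^h$ is the wrong direction — rather $\ell^h\le\min_j\ell^-_h(\varpi_j)$ trivially. For the reverse inequality, write an arbitrary nonzero $\lambda\in\Lambda^+$ as a nonnegative integer combination $\lambda=\sum_j m_j\varpi_j$ with not all $m_j=0$; grouping the nonzero terms and applying part (iii) repeatedly (together with the obvious $\ell^-_h(m\varpi_j)=\ell^-_h(\varpi_j)$ for $m\ge1$, since rescaling by a positive integer does not change the partition of $W$, as noted in Remark \ref{Rem h in Lambda}), we get $\ell^-_h(\lambda)\ge\min\{\ell^-_h(\varpi_j):m_j\ne0\}\ge\min_j\ell^-_h(\varpi_j)$. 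Taking the minimum over all nonzero $\lambda\in\Lambda^+$ yields $\ell^h\ge\min_j\ell^-_h(\varpi_j)$, and combined with the trivial direction this gives equality. The only mild subtlety — and the one place to be careful — is the induction in part (iv): part (iii) as stated is a two-term inequality, so to handle $\lambda=\sum_{j\in S}m_j\varpi_j$ with $|S|\ge2$ one iterates, say $\ell^-_h(\lambda)\ge\min\{\ell^-_h(m_{j_0}\varpi_{j_0}),\ell^-_h(\lambda-m_{j_0}\varpi_{j_0})\}$ and then recurses on the second argument, which remains a nonzero dominant weight; this is routine but should be spelled out. Everything else is immediate from $W$-invariance of $\langle,\rangle$ and the elementary sign argument.
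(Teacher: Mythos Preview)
Your proof is correct and follows essentially the same approach as the paper: $W$-invariance of $\langle,\rangle$ and $l(w)=l(w^{-1})$ for (i), the resulting length-preserving bijection for (ii), the elementary sign argument on $\langle w\lambda_1,h\rangle+\langle w\lambda_2,h\rangle$ for (iii), and the fact that the fundamental weights generate $\Lambda^+$ combined with (iii) for (iv). You have spelled out more carefully than the paper the iteration of (iii) and the invariance of $\ell^-_h$ under positive rescaling, but these are routine details the paper simply omits.
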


\begin{proof}
Part (i) follows from the equalities $\langle w\lambda, h\rangle=\langle\lambda, w^{-1}h\rangle=\langle w^{-1}h,\lambda,\rangle$ and the fact that $(r_{j_1}...r_{j_l})^{-1}=r_{j_l}...r_{j_1}$. Part (ii) follows from part (i) and the definition of $\ell^-_h$. Part (iii) follows from the simple fact, if $\langle w\lambda_j,h\rangle>0$ holds for $j=1,2$, then $\langle w(\lambda_1+\lambda_2),h\rangle>0$. Part (iv) is a direct consequence of (iii) and the fact that the fundamental weights generate $\Lambda^+$.
\end{proof}

\begin{coro}\label{Coro ellDeltaDual}
The invariant $\ell_\Delta$ admits the following properties:
\begin{enumerate}
\item[{\rm (i)}] $\ell_\Delta=\min\{\ell^{\varpi_1},...,\ell^{\varpi_n}\}$;
\item[{\rm (ii)}] $\ell_\Delta=\min\{\ell^-_{\varpi_j}(\varpi_k)\;: \; j,k\in\{1,...,n\}\}$;
\item[{\rm (iii)}] $\ell_\Delta=\ell_{\Delta^\vee}$, where $\Delta^\vee$ is the dual root system;
\item[{\rm (iv)}] $\ell_\Delta=\min\{\ell_{\Delta_1},...,\ell_{\Delta_k}\}$, where $\Delta=\Delta_1\sqcup\dots\sqcup\Delta_k$ is the decomposition of $\Delta$ into simple components.
\end{enumerate}
The properties (iii) and (iv) hold for the invariant $\ell_\Delta^{\rm sd}$ as well. In addition, we have
$$
\ell_\Delta^{\rm sd}=\min\{\min\{\ell_h^-(\varpi_j):h\in\Lambda_{\rm sd}^+\setminus\{0\},j=1,...,n\}\}\;.
$$
\end{coro}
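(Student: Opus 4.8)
The strategy is to read off parts (i), (ii) and the last displayed identity directly from Lemma~\ref{Lemma lminush min bj}, to deduce (iv) from the direct-sum decomposition of $\Delta$, and to obtain (iii) from the Killing-form duality between a root system and its dual. For the first group: given a nonzero $h\in\Lambda^+$, Lemma~\ref{Lemma lminush min bj}(iv) gives $\ell^h=\min_j\ell_h^-(\varpi_j)$ and part~(ii) of that lemma gives $\ell_h^-(\varpi_j)=\ell_{\varpi_j}^-(h)\geq\ell^{\varpi_j}$, the inequality because $h$ is a nonzero dominant weight and $\ell^{\varpi_j}$ is by definition the minimum of $\ell^-_{\varpi_j}$ over all such. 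Taking the minimum over $h$ yields $\ell_\Delta\geq\min_j\ell^{\varpi_j}$, while the reverse inequality is immediate because each $\varpi_j$ is a nonzero dominant weight; this proves (i). Feeding Lemma~\ref{Lemma lminush min bj}(iv) once more into $\ell^{\varpi_j}=\min_k\ell^-_{\varpi_j}(\varpi_k)$ proves (ii). Running the same two parts of the lemma with $h$ restricted to $\Lambda^+_{\rm sd}\setminus\{0\}$ gives $\ell^{\rm sd}_\Delta=\min\{\ell^-_h(\varpi_j):h\in\Lambda^+_{\rm sd}\setminus\{0\},\,j=1,\dots,n\}$ straight from the definition, which is the last assertion.

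For (iv) I would use the description $\ell_\Delta=\min\{l(w):w\Lambda^+\nsubseteq{\rm Cone}(\Delta^+)\}$ of (\ref{For Def ellK}), together with $W=W_1\times\dots\times W_k$, $\Lambda_\RR=\bigoplus_i(\Lambda_i)_\RR$ and the fact that ${\rm Cone}(\Delta^+)=\bigoplus_i{\rm Cone}(\Delta_i^+)$ is a direct sum of cones. Writing $w=(w_i)$ and $\lambda=\sum_i\lambda_i$, directness makes $w\lambda\in{\rm Cone}(\Delta^+)$ equivalent to $w_i\lambda_i\in{\rm Cone}(\Delta_i^+)$ for every $i$; hence $w\Lambda^+\nsubseteq{\rm Cone}(\Delta^+)$ iff $w_i\Lambda_i^+\nsubseteq{\rm Cone}(\Delta_i^+)$ for some $i$, and since $l(w)=\sum_il_i(w_i)$ the minimum is attained by taking all factors but one equal to the identity, so $\ell_\Delta=\min_i\ell_{\Delta_i}$. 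Since $w_0=(w_0^{(i)})$, one also has $\Lambda^+_{\rm sd}=\bigoplus_i(\Lambda_i)^+_{\rm sd}$, and the identical argument gives (iv) for $\ell^{\rm sd}_\Delta$.

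The substantive point is (iii). Consider the linear isomorphism $\kappa\colon\Lambda_\RR\to\Gamma_\RR$ induced by the Killing form; it is $W$-equivariant precisely because the form is $W$-invariant. By the computation recorded in Remark~\ref{Rem h in Lambda}, $\kappa(\varpi_j)$ is a positive multiple of the fundamental coweight $\varpi_j^\vee$, so $\kappa$ carries $\Lambda_\RR^+={\rm Cone}(\varpi_1,\dots,\varpi_n)$ onto $\Gamma_\RR^+={\rm Cone}(\varpi_1^\vee,\dots,\varpi_n^\vee)$ and ${\rm Cone}(\Delta^+)={\rm Cone}(\alpha_1,\dots,\alpha_n)$ onto ${\rm Cone}(\alpha_1^\vee,\dots,\alpha_n^\vee)={\rm Cone}((\Delta^\vee)^+)$; being $W$-equivariant it commutes with $w_0$, hence carries the real cone spanned by $\Lambda^+_{\rm sd}$ (that is, $\Lambda^+_\RR$ intersected with the $(-1)$-eigenspace of $w_0$) onto the analogous cone in $\Gamma_\RR$. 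Recalling from (\ref{For Def ellK}) that $\ell_\Delta=\min\{l(w):w\Lambda^+\nsubseteq{\rm Cone}(\Delta^+)\}$ and $\ell^{\rm sd}_\Delta=\min\{l(w):w\Lambda^+_{\rm sd}\nsubseteq{\rm Cone}(\Delta^+)\}$ — in which $\Lambda^+$ and $\Lambda^+_{\rm sd}$ may be replaced by their real spans since ${\rm Cone}(\Delta^+)$ is closed — and using that $W$ and its length function are literally the same for $\Delta$ and $\Delta^\vee$, an application of $\kappa$ gives $\ell_\Delta=\ell_{\Delta^\vee}$ and $\ell^{\rm sd}_\Delta=\ell^{\rm sd}_{\Delta^\vee}$.

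Most of this is bookkeeping with the standard duality between a root system and its dual. The one step where I expect to have to be careful is the self-dual case of (iii): one must check that $w_0$ acts as a single Weyl-group element on the weight lattices of $\Delta$ and of $\Delta^\vee$ simultaneously, so that ``self-dual'' is the same condition on both sides and is respected by $\kappa$, and that replacing the integral self-dual dominant weights by the corresponding real cone leaves the invariant unchanged. Neither point is deep, but they are what makes the chain of identities valid.
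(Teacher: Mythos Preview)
Your proof is correct and, for parts (i), (ii) and the last displayed formula, essentially the same as the paper's (the paper writes the argument as a single chain of equalities via Lemma~\ref{Lemma lminush min bj}(ii),(iv); you unpack the two inequalities separately).

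For (iii) and (iv) the routes diverge somewhat. The paper does not argue directly from (\ref{For Def ellK}); instead it first proves (i) and (ii) and then observes that both (iii) and (iv) follow because the minimum $\ell_\Delta$ is attained at a pair of fundamental weights: for (iii) one uses that $\varpi_j^\vee$ is a positive multiple of $\varpi_j$ (Remark~\ref{Rem h in Lambda}), so $\ell^-_{\varpi_j}(\varpi_k)=\ell^-_{\varpi_j^\vee}(\varpi_k^\vee)$; for (iv) one uses that the fundamental weights of $\Delta$ are those of its simple factors. Your direct arguments from the defining formula, via the $W$-equivariant Killing isomorphism $\kappa$ for (iii) and the direct-sum decomposition for (iv), are equally valid and perhaps more transparent, at the cost of a few more lines. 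The one place where the difference is genuinely interesting is property (iii) for $\ell^{\rm sd}_\Delta$: you show that $\kappa$ carries the real self-dual cone to the corresponding cone for $\Delta^\vee$, which requires the small check (flagged in your last paragraph) that the invariant can be computed on the real cone rather than the lattice. The paper instead sidesteps this by the observation that, for simple $\Delta$, $\Delta\ncong\Delta^\vee$ forces $w_0=-1$ and hence $\ell^{\rm sd}_\Delta=\ell_\Delta$, reducing the self-dual statement to the one already proved; the general case then follows from (iv). Your argument avoids this type-by-type fact; the paper's avoids the passage to real cones.
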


\begin{proof}
Parts (i) and (ii) follow from Lemma \ref{Lemma lminush min bj},(ii),(iv), as
\begin{align*}
\ell_\Delta&=\min\{\ell^h:h\in\Lambda^+\setminus\{0\}\}=\min\{\ell_h^-(\varpi_j):h\in\Lambda^+\setminus\{0\},j=1,...,n\}\\
 & =\min\{\ell^-_{\varpi_j}(h):h\in\Lambda^+\setminus\{0\},j=1,...,n\}=\min\{\ell^-_{\varpi_j}(\varpi_k):j,k=1,...,n\}\\
 & = \min\{\ell^{\varpi_j}:j=1,...,n\}\;.
\end{align*}
The fact that $\ell_\Delta$ is attained at a fundamental weight implies parts (iii) and (iv). Indeed, the fundamental weights $\varpi_j^\vee$ of the dual system are positively proportional to $\varpi_j$ (see Remark \ref{Rem h in Lambda}), whence $\ell^{\varpi_j}=\ell^{\varpi_j^\vee}$, and this implies part (iii). For part (iv) note that the fundamental weights of $\Delta$ are the fundamental weights of its simple components and the Weyl group is a direct product.

Analogously, the given expression for $\ell_\Delta^{\rm sd}$ follows from Lemma \ref{Lemma lminush min bj},(iv). For property (iii), note that $\Delta\ncong\Delta^\vee$ implies $w_0=-1$ and $\Lambda^+=\Lambda^+_{\rm sd}$. Thus, whenever the property $\ell_\Delta^{\rm sd}=\ell_{\Delta^\vee}^{\rm sd}$ is not redundant, it follows from the property $\ell_\Delta=\ell_{\Delta^\vee}$. Property (iv) follows from the given expression and the fact that $\Lambda_{\rm sd}^{+} = (\Lambda_1^{+})_{\rm sd}\times\dots\times (\Lambda_k^{+})_{\rm sd}$.
\end{proof}

With this general preparation, we are ready to formulate our theorems on the values of $\ell_\Delta$ and $\ell_\Delta^{
\rm sd}$. The proofs consist of case analysis of the simple types. We present general proofs for the classical series in corresponding subsections. The results for the exceptional groups are obtained in part with the help of a computer calculation, and we present tables with the necessary data.

\begin{theorem}\label{Theo Values ellDelta}
The values of the numerical invariant $\ell_{\Delta}$ for a simple root system $\Delta$ are as follows:

\begin{tabular}{|l|l|l|l|}
\hline
Type of $\Delta$ & $\ell_\Delta$ & Attained at $\ell_h^-(\lambda)$ & $w\in W(\ell_\Delta):\langle w\lambda,h\rangle<0$ \\
\hline
${\bf A}_n$ & $1$ & $\ell_{\varpi_1}^-(\varpi_1)$ & $r_1$ \\
\hline
${\bf B}_n$ & $n$ & $\ell_{\varpi_1}^-(\varpi_n)$ & $r_1\dots r_n$ \\
            &     & & \\
\hline
${\bf C}_n$ & $n$ & $\ell_{\varpi_1}^-(\varpi_n)$ & $r_1\dots r_n$ \\
            &     & & \\
\hline
${\bf D}_n$ & $n-1$, if $n\geq6$ & $\ell_{\varpi_1}^-(\varpi_n)$ & $r_1\dots r_{n-2}r_n$ \\
            &  $3$, if $n=5$ & $\ell^-_{\varpi_n}(\varpi_{n-1})$ & $r_5r_3r_4$ \\
\hline
${\bf E}_6$ & $5$ & $\ell^-_{\varpi_1}(\varpi_6)$ & $r_1r_3r_4r_5r_6$ \\
\hline
${\bf E}_7$ & $10$ & $\ell^-_{\varpi_7}(\varpi_7)$ & $r_7r_6r_5r_4r_2r_3r_4r_5r_6r_7$ \\
\hline
${\bf E}_8$ & $7\leq\ell_{{\bf E}_8}\leq 29=\ell^{\varpi_8}$ & $\ell^{\varpi_8}=\ell^-_{\varpi_8}(\varpi_j),\;\forall j$ & \\
\hline
${\bf F}_4$ & $8$ & $\ell^-_{\varpi_1}(\varpi_1)$ & $r_1r_2r_3r_2r_4r_3r_2r_1$ \\
\hline
${\bf G}_2$ & $3$ & $\ell^-_{\varpi_1}(\varpi_1)=\ell^-_{\varpi_2}(\varpi_2)$ & $r_1r_2r_1$, resp. $r_2r_1r_2$ \\
\hline
\end{tabular}
\end{theorem}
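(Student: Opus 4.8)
\textbf{Proof strategy for Theorem \ref{Theo Values ellDelta}.}

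The plan is to exploit the reduction provided by Corollary \ref{Coro ellDeltaDual}: since $\ell_\Delta=\min\{\ell^-_{\varpi_j}(\varpi_k):j,k\in\{1,\dots,n\}\}$, it suffices to compute, for each pair of fundamental weights $(\varpi_j,\varpi_k)$, the minimal length of a Weyl group element $w$ with $\langle w\varpi_k,\varpi_j\rangle<0$, and then take the overall minimum over $j,k$. By Lemma \ref{Lemma lminush min bj}(ii) we may always reorganize so that the ``source'' weight is the minuscule or otherwise most efficient one; the table indicates the winning pair in each case. The fundamental observation driving all lower bounds is: if $w=r_{j_1}\cdots r_{j_l}$ and $\langle w\varpi_k,\varpi_j\rangle<0$, then along the $w$-trajectory $\varpi_k=\mu_0,\mu_1,\dots,\mu_l=w\varpi_k$ the pairing $\langle\mu_i,\varpi_j\rangle$ must change sign, and each simple reflection $r_{j_i}$ alters this pairing by a bounded integer amount (controlled by the relevant Cartan integers). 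Thus a lower bound on $l(w)$ follows by tracking how slowly $\langle\mu_i,\varpi_j\rangle$ can decrease from its initial positive value; this is essentially the height argument already illustrated in Example \ref{Exa shortroot}, where $\ell^\beta$ for a dominant root equals its height. For the upper bounds one exhibits an explicit reduced word realizing the sign change, exactly the words recorded in the last column of the table.

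I would organize the proof by type. For the classical series ${\bf A}_n$, ${\bf B}_n$, ${\bf C}_n$, ${\bf D}_n$ I would give uniform arguments using the standard coordinate realizations: weights as integer (or half-integer) vectors, $W$ as signed permutations (or even-signed permutations for ${\bf D}_n$), and the fundamental weights as partial sums of coordinate vectors. For type ${\bf A}_n$ one checks $\langle r_1\varpi_1,\varpi_1\rangle<0$ directly, giving $\ell_{{\bf A}_n}=1$. For ${\bf B}_n$ and ${\bf C}_n$, the spin/vector weight $\varpi_n$ paired against $\varpi_1$ requires flipping a sign in the last coordinate, which in signed-permutation language needs the full chain $r_1\cdots r_n$, and one shows no shorter element can make any $\langle w\varpi_k,\varpi_j\rangle$ negative — this is where the height/coordinate bookkeeping does the work. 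Type ${\bf D}_n$ splits: for $n\geq6$ the answer $n-1$ comes from $\varpi_n$ paired with $\varpi_1$ via $r_1\cdots r_{n-2}r_n$, while the sporadic value $3$ at $n=5$ reflects the triality-like coincidence $\varpi_4,\varpi_5$ behaving like vector weights, detected by the short element $r_5r_3r_4$; this low-rank exception must be handled by direct inspection. For the exceptional types ${\bf E}_6,{\bf E}_7,{\bf F}_4,{\bf G}_2$ I would rely on the computer calculation cited at \cite{Compute} to certify the minimum, while still recording by hand the explicit witnessing element from the table and verifying (again by hand, as a sanity check) that it does produce a negative pairing. For ${\bf G}_2$ the rank-$2$ picture is small enough to verify everything directly.

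The main obstacle is type ${\bf E}_8$, and more generally the \emph{lower bound} arguments across all types. Producing the witnessing word (the upper bound) is routine; the difficulty is proving that no shorter Weyl group element does the job for \emph{any} pair $(\varpi_j,\varpi_k)$, i.e.\ that $\langle w\varpi_k,\varpi_j\rangle\geq0$ for all $w$ with $l(w)$ below the claimed value. For the classical types this follows from a clean monotonicity estimate in coordinates, but for ${\bf E}_8$ no such small coordinate model exists and the Weyl group is enormous, so one can only bound: the lower bound $7$ presumably comes from the fact that $\varpi_8$ (the adjoint/highest-root weight) has a trajectory that cannot turn negative too quickly — essentially $\ell_{{\bf E}_8}\geq$ some function of the minimal relevant height — while the upper bound $29=\ell^{\varpi_8}$ is the exact value of $\ell^-_{\varpi_8}(\varpi_j)$ for every $j$, computed by the program. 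Closing the gap between $7$ and $29$ would require either a much more refined combinatorial argument or a substantially larger computation, and the theorem honestly leaves it as bounds. I would present the ${\bf E}_8$ case last, clearly flagging it as incomplete, and concentrate the careful writing on the classical families where the height-tracking argument gives sharp results.
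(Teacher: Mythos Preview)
Your strategy matches the paper's approach closely: the reduction via Corollary \ref{Coro ellDeltaDual} to pairs of fundamental weights, coordinate models for the classical series, the height argument of Example \ref{Exa shortroot} for ${\bf G}_2$ and for the ${\bf E}_8$ upper bound, and reliance on the computer verification \cite{Compute} for ${\bf F}_4$, ${\bf E}_6$, ${\bf E}_7$. Your description of the lower-bound mechanism for classical types (tracking which basis vectors can be moved where, and at what cost in length) is indeed how the paper argues in Propositions \ref{Prop Cn ell fundwei} and \ref{Prop Dn ell fundwei}.

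The one point where your proposal diverges is the ${\bf E}_8$ lower bound. You speculate that $7$ comes from a trajectory/height argument applied to $\varpi_8$, but that is not how the paper obtains it, and in fact such an argument would not obviously yield any usable bound since $\ell_\Delta$ is a minimum over all pairs $(\varpi_j,\varpi_k)$, not just those involving $\varpi_8$. The paper instead uses a subsystem inclusion: ${\bf E}_8$ contains a root subsystem of type ${\bf D}_8$, and the Weyl chambers can be chosen so that $\Lambda_\RR^+({\bf E}_8)\subset\Lambda_\RR^+({\bf D}_8)$. This compatibility implies $\ell_{{\bf E}_8}\geq\ell_{{\bf D}_8}$, and the value $\ell_{{\bf D}_8}=7$ is already known from the classical case. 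This subsystem trick is the concrete idea you are missing; without it your ${\bf E}_8$ lower bound has no argument behind it.

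A minor remark: the ${\bf D}_5$ anomaly is not triality (that is ${\bf D}_4$); it is simply a low-rank coincidence that $\ell^-_{\varpi_5}(\varpi_4)=3$ falls below $\ell^-_{\varpi_1}(\varpi_n)=n-1=4$, detected by direct computation as you say.
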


\begin{proof}
The values of $\ell^{\varpi_j}$ are computed and compared for each simple root system in respective propositions as follows. 

For type ${\bf A}_n$, see Proposition \ref{Prop An ell fundwei}.

Types ${\bf B}_n$ and ${\bf C}_n$ are dual to each other, and it suffices to consider one of them. We choose type ${\bf C}_n$. Some remarks on type ${\bf B}_n$ are given in \S\ref{Sect Bn}.

For type ${\bf C}_n$, see Proposition \ref{Prop Cn ell fundwei}.

For type ${\bf D}_n$, see Proposition \ref{Prop Dn ell fundwei}.

For type ${\bf E}_6$, see Proposition \ref{Prop E6 ell and LSD}.

For type ${\bf E}_7$, see Proposition \ref{Prop E7 ell}.

For type ${\bf E}_8$, see Proposition \ref{Prop E8 ell bounds}.

For type ${\bf F}_4$, see Proposition \ref{Prop F4 ell}.

For type ${\bf G}_2$, see Proposition \ref{Prop G2 ell}.
\end{proof}

\begin{theorem}\label{Theo Values ellsdDelta}
The values of the numerical invariant $\ell_{\Delta}^{\rm sd}$ for a simple root system $\Delta$ are as follows:

\begin{tabular}{|l|l|l|l|}
\hline
Type of $\Delta$ & $\ell_\Delta^{\rm sd}$ & Attained at $\ell_h^-(\lambda)$ & $w\in W(\ell_\Delta):\langle w\lambda,h\rangle<0$ \\
\hline
${\bf A}_n$ & $\lfloor\frac{n+2}{2}\rfloor$ & $\ell_{\rho}^-(\varpi_1)$ & $r_{\lfloor\frac{n+2}{2}\rfloor}...r_1$ \\
\hline
${\bf B}_n$, $n\geq 2$ & $n=\ell_{\Delta}$ & $\ell_{\varpi_1}^-(\varpi_n)$ & $r_1\dots r_n$ \\
            &     & & \\
\hline
${\bf C}_n$, $n\geq 2$ & $n=\ell_{\Delta}$ & $\ell_{\varpi_1}^-(\varpi_n)$ & $r_1\dots r_n$ \\
            &     & & \\
\hline
${\bf D}_n$, $n\geq 4$ & $n-1$ & $\ell_{\varpi_1}^-(\varpi_n)$ & $r_1\dots r_{n-2}r_n$ \\
            & $=\ell_\Delta$ if $n\ne5$ & & \\
\hline
${\bf E}_6$ & $9$ & $\ell^-_{\varpi_1}(\varpi_1+\varpi_6)$ & $r_1r_3r_4r_2r_5r_4r_3r_1r_6$ \\
\hline
${\bf E}_7$ & $10$ & $\ell^-_{\varpi_7}(\varpi_7)$ & $r_7r_6r_5r_4r_2r_3r_4r_5r_6r_7$ \\
\hline
${\bf E}_8$ & $\ell_{{\bf E}_8}$ & & \\
\hline
${\bf F}_4$ & $8=\ell_{\Delta}$ & $\ell^-_{\varpi_1}(\varpi_1)$ & $r_1r_2r_3r_2r_4r_3r_2r_1$ \\
\hline
${\bf G}_2$ & $3=\ell_{\Delta}$ & $\ell^-_{\varpi_1}(\varpi_1)=\ell^-_{\varpi_2}(\varpi_2)$ & $r_1r_2r_1$, resp. $r_2r_1r_2$ \\
\hline
\end{tabular}
\end{theorem}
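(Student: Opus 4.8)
The plan is to compute $\ell_\Delta^{\rm sd}$ type by type, using Corollary \ref{Coro ellDeltaDual}, which reduces the problem to evaluating $\ell_h^-(\varpi_j)$ for self-dual $h\in\Lambda_{\rm sd}^+\setminus\{0\}$ and fundamental weights $\varpi_j$. The first observation is that for all types with $w_0=-1$ --- namely ${\bf B}_n$, ${\bf C}_n$, ${\bf D}_{2n}$, ${\bf E}_7$, ${\bf E}_8$, ${\bf F}_4$, ${\bf G}_2$ --- we have $\Lambda^+=\Lambda_{\rm sd}^+$, so $\ell_\Delta^{\rm sd}=\ell_\Delta$ and the value is read off from Theorem \ref{Theo Values ellDelta} with the same witness. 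This disposes of all rows except ${\bf A}_n$, ${\bf D}_n$ for odd $n$, and ${\bf E}_6$, and it remains to handle these three families where $\Lambda_{\rm sd}^+$ is a proper subcone.

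For type ${\bf A}_n$, the self-dual dominant weights are spanned by $\varpi_j+\varpi_{n+1-j}$ and (if $n$ is odd) $\varpi_{(n+1)/2}$; the key point is that $\rho=\sum\varpi_j$ is the ``most interior'' self-dual element and, by Lemma \ref{Lemma lminush min bj}(iii), $\ell_h^-(\lambda)$ only grows as $h$ moves toward the interior, so $\ell_{\rho}^-$ governs the minimum. I would then show directly, via the standard model of the ${\bf A}_n$ Weyl group as $S_{n+1}$ acting on $\varepsilon_1,\dots,\varepsilon_{n+1}$ with $\varpi_1=\varepsilon_1$, that the shortest $w$ with $\langle w\varpi_1,\rho\rangle<0$ is the cycle $r_{\lfloor(n+2)/2\rfloor}\cdots r_1$, of length $\lfloor\frac{n+2}{2}\rfloor$: one must push the index $1$ of $\varepsilon_1$ past the midpoint of the $\rho$-weighting $(n,n-2,\dots,-n)/2$, and each simple transposition moves it one step, so $\lceil(n+1)/2\rceil=\lfloor(n+2)/2\rfloor$ steps are both necessary and sufficient. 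A short argument using Lemma \ref{Lemma lminush min bj}(ii) and (iv) then confirms no other pair $(h,\varpi_j)$ does better. This case is worked out in the ${\bf A}_n$ subsection (Proposition \ref{Prop An ell fundwei} and its self-dual companion).

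For type ${\bf D}_n$ with $n$ odd, $w_0$ swaps the two spin nodes $\varpi_{n-1},\varpi_n$, so $\Lambda_{\rm sd}^+$ is cut out by equality of the last two coordinates; in particular $\varpi_1,\dots,\varpi_{n-2}$ and $\varpi_{n-1}+\varpi_n$ are self-dual. Here I expect the answer $n-1$ to be attained exactly as in the $w_0=-1$ case, at $\ell_{\varpi_1}^-(\varpi_n)$ --- note $\varpi_1$ is self-dual for all $n\geq4$ --- so the relevant computation is essentially the one already performed for general ${\bf D}_n$ in Proposition \ref{Prop Dn ell fundwei}, and one only has to check that restricting to self-dual $h$ (here $h=\varpi_1$ is already self-dual) does not lose the witness and that the anomalous small value $3$ occurring at $n=5$ comes from a \emph{non}-self-dual pair, hence disappears for $\ell^{\rm sd}$. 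For type ${\bf E}_6$, where $w_0$ induces the diagram involution, the self-dual cone is spanned by $\varpi_2$, $\varpi_4$, $\varpi_3+\varpi_5$, $\varpi_1+\varpi_6$, and a direct (computer-assisted) search over $W({\bf E}_6)$ gives $\ell_{\varpi_1}^-(\varpi_1+\varpi_6)=9$ with the explicit reduced word $r_1r_3r_4r_2r_5r_4r_3r_1r_6$, together with the verification that no shorter element sends any self-dual fundamental combination out of ${\rm Cone}(\Delta^+)$.

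\textbf{Main obstacle.} The genuine content is concentrated in the three non-trivial families, and the hard part is type ${\bf A}_n$: one must prove the matching lower bound $\ell_\rho^-(\varpi_1)\geq\lfloor\frac{n+2}{2}\rfloor$, i.e. that \emph{no} shorter Weyl element takes \emph{any} nonzero self-dual dominant weight outside the positive cone. Because $\ell_\Delta^{\rm sd}$ for ${\bf A}_n$ jumps from the trivially small $\ell_\Delta=1$ all the way to $\lfloor\frac{n+2}{2}\rfloor$, the estimate must genuinely exploit self-duality --- the inequality $\langle w\lambda,h\rangle<0$ for self-dual $\lambda$ forces the $w$-trajectory to cross a symmetric hyperplane, costing about half the rank in length --- and this is where the careful combinatorics of $S_{n+1}$-trajectories in Proposition \ref{Prop An ell fundwei} (and its self-dual variant) is needed. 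The ${\bf E}_6$ case presents no conceptual difficulty but relies on the computer calculation recorded at \cite{Compute}, and the ${\bf D}_n$ odd case is a routine modification of the general ${\bf D}_n$ argument.
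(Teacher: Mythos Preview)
Your overall strategy matches the paper's: dispose of the $w_0=-1$ types trivially, then handle ${\bf A}_n$, ${\bf D}_{2m+1}$, ${\bf E}_6$ separately by working with generators of $\Lambda_{\rm sd}^+$. For ${\bf D}_{2m+1}$ and ${\bf E}_6$ your outline is essentially what the paper does in Propositions \ref{Prop Dn ell and LSD} and \ref{Prop E6 ell and LSD}; for ${\bf D}_5$ you should also note that one must explicitly verify $\ell^{\varpi_4+\varpi_5}\geq 4$, since $\varpi_4+\varpi_5$ is a self-dual generator not covered by the $\ell_\Delta$ computation --- it is not enough to observe that the anomalous value $3$ came from a non-self-dual $h$.

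There is, however, a genuine gap in your ${\bf A}_n$ argument. The claim ``by Lemma \ref{Lemma lminush min bj}(iii), $\ell_h^-(\lambda)$ only grows as $h$ moves toward the interior, so $\ell_\rho^-$ governs the minimum'' is wrong on both counts. Lemma \ref{Lemma lminush min bj}(iii), combined with the symmetry (ii), gives only $\ell_{h_1+h_2}^-(\lambda)\geq\min\{\ell_{h_1}^-(\lambda),\ell_{h_2}^-(\lambda)\}$, which is \emph{not} monotonicity toward the interior; the correct consequence is that the minimum of $\ell^h$ over the self-dual cone is attained at its extremal rays $\varpi_j+\varpi_{n+1-j}$ (and $\varpi_{(n+1)/2}$ for odd $n$), not at $\rho$. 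Computing $\ell_\rho^-(\varpi_1)=\lfloor(n+2)/2\rfloor$ gives only the upper bound $\ell_\Delta^{\rm sd}\leq\lfloor(n+2)/2\rfloor$. For the matching lower bound you must show $\ell^h\geq\lfloor(n+2)/2\rfloor$ for \emph{every} self-dual generator. This is what the paper does in Proposition \ref{Prop An ell hirootAndSelfdualh}, establishing the formula $\ell^h=n+1-j_h$ (with $j_h=\max\{j:a_j\neq0\}$) for arbitrary self-dual $h$, whence $\min_h\ell^h=n+1-\lfloor(n+1)/2\rfloor=\lfloor(n+2)/2\rfloor$. That $\rho$ happens to realise this minimum is because $j_\rho$ is maximal among all self-dual $h$, not because of any interior monotonicity.
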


\begin{proof}
For type ${\bf A}_n$ the statement is proven in Proposition \ref{Prop An ell hirootAndSelfdualh}.

For types ${\bf B}_n,{\bf C}_n,{\bf D}_{2m},{\bf E}_7,{\bf E}_8,{\bf F}_4,{\bf G}_2$, the property $w_0=-1$ holds and all weights are self-dual. We automatically have $\ell_\Delta=\ell_\Delta^{\rm sd}$ and the values are given in Theorem \ref{Theo Values ellDelta}.

For type ${\bf D}_{2m+1}$, the statement is proven is Proposition \ref{Prop Dn ell and LSD}.

For type ${\bf E}_6$, the result is given in Proposition \ref{Prop E6 ell and LSD}.
\end{proof}

\subsection{Type ${\bf A}_n$}\label{Sect An}

Let $e_1,...,e_{n+1}$ be the standard basis of $\RR^{n+1}$, considered with the standard scalar product so that the basis is orthonormal. We denote $\varepsilon = e_1+...+e_{n+1}$. In these coordinates the relevant data for the root system ${\bf A}_n$ is the following (cf. \cite{Bourbaki-Lie-2}).

\begin{enumerate}
\item[$\bullet$] Simple roots: $\alpha_j=e_j-e_{j+1}$, $j=1,...,n$.
\item[$\bullet$] Fundamental weights: $\varpi_j=b_j-\frac{j}{n+1}\varepsilon$, where $b_j=e_1+...+e_j$ for $j=1,...,n$.
\item[$\bullet$] Simple reflections: $r_1,...,r_n$, where $r_j$ transposes $e_j$ and $e_{j+1}$ and leaves the remaining basis vectors fixed.
\end{enumerate}
%\begin{align*}
%& r_1 = \begin{pmatrix} 0 & 1 & 0 & ... & 0 \\ 1 & 0 & 0 & ... & 0 \\ 0 & 0 & 1 & ... & 0 \\ .&.&.& ... & . \\
%0 & 0 & 0 & ... & 1 \end{pmatrix} \;,\quad
%r_2 = \begin{pmatrix} 1 & 0 & 0 & ... & 0 \\ 0 & 0 & 1 & ... & 0 \\ 0 & 1 & 0 & ... & 0 \\ .&.&.& ... & . \\
%0 & 0 & 0 & ... & 1 \end{pmatrix}\\
%& \textrm{For $1\leq j\leq l$, in $r_j$ the block}\; \begin{pmatrix} 0&1\\1&0 \end{pmatrix}\; \textrm{is shifted along the diagonal %o begin at the $j$-th raw and column.}
%\end{align*}

%%%%%%%%%%%%
% $\bullet$ Procedure to obtain the $h$-vectors: 
% they are of the form $h=Mv$, where $M$ is the 
% $(n+1)\times(n+1)$-matrix given below and 
% $v$ is a vector-column, whose entries are 1's and 0's 
% (any distribution of 1's and 0's, just not all 0's). 
% For example, the principal $h$ is obtained when all entries of $v$ are 1.
% $$
% M = \frac{1}{n+1} \begin{pmatrix} l & l-1 & l-2 & l-3 & ... & 2 & 1 & 1 \\ -1 & l-1 & l-2 & l-3 & ... & 2 & 1 & 1 \\ 
% -1 & -2 & l-2 & l-3 & ... & 2 & 1 & 1 \\ -1 & -2 & -3 & l-3 & ... & 2 & 1 & 1 \\ . & . & . & . & ... & . & . & . \\
% -1 & -2 & -3 & -4 & ... & -(l-1) & 1 & 1 \\ -1 & -2 & -3 & -4 & ... & -(l-1) & -l & 1 \end{pmatrix}
% $$
% For instance, for $n=3$ we get
% $$
% M = \frac1n \begin{pmatrix} 3 & 2 & 1 & 1 \\ -1 & 2 & 1 & 1 \\ -1 & -2 & 1 & 1 \\ -1 & -2 & -3 & 1 \end{pmatrix}
% $$
%%%%%%%%%%%%

\begin{prop}\label{Prop An ell hPrince}
Let $h=\rho^\vee=\rho=\varpi_1 +...+\varpi_n$ be the principal semisimple element of $\mk{sl}_{n+1}$. Then
$$
\ell^h = \ell_h^-(\varpi_1) = \lfloor(n+2)/2\rfloor \;.
$$
Furthermore, there is a unique $w$ of length $\ell^h$ fulfilling the inequality $\langle w\varpi_1,h\rangle<0$; it is given by the product of the first $\ell^h$ simple reflections:
$$
w=r_{\ell^h}r_{\ell^h-1}...r_1 \;.
$$
\end{prop}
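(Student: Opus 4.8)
The plan is to work entirely in the coordinates $e_1,\dots,e_{n+1}$ introduced for type ${\bf A}_n$, where $W=S_{n+1}$ acts by permuting coordinates, $h=\rho$ has (after rescaling, which is harmless by Remark \ref{Rem h in Lambda}) the strictly decreasing coordinate vector $(n, n-2, \dots, -n)$ — more precisely $2\rho = \sum_j (n+2-2j)e_j$ — and $\varpi_1 = e_1 - \tfrac{1}{n+1}\varepsilon$, so that for $w\in S_{n+1}$ one has $\langle w\varpi_1, h\rangle = \langle e_{w(1)}, h\rangle - \tfrac{1}{n+1}\langle\varepsilon,h\rangle = \langle e_{w(1)},h\rangle$, since $\langle\varepsilon,h\rangle=0$ for $h=\rho$. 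Writing $h_i = \langle e_i, h\rangle$ for the (strictly decreasing) coordinates of $h$, the condition $\langle w\varpi_1,h\rangle < 0$ becomes simply $h_{w(1)} < 0$, i.e. $w(1)$ lands in the ``negative half'' of the coordinate positions. The number of positions $i$ with $h_i < 0$ is $\lceil (n+1)/2\rceil$ (and the middle coordinate is $0$ when $n$ is even), so the smallest index $i$ with $h_i<0$ is $i_0 = \lfloor (n+1)/2\rfloor + 1 = \lfloor (n+3)/2\rfloor$; one checks this equals $\lfloor (n+2)/2\rfloor + 1$ only in the odd case, so it is cleaner to track $i_0$ directly and verify at the end that $i_0 - 1 = \lfloor(n+2)/2\rfloor$.

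Next I would compute $\ell_h^-(\varpi_1) = \min\{ l(w) : h_{w(1)} < 0\}$. Since $l(w)$ for $w\in S_{n+1}$ is the number of inversions, and we only constrain the value $w(1)$, the minimal-length permutation achieving a prescribed value $w(1)=i_0$ is the cycle that moves $1\mapsto i_0$ while keeping everything else in increasing order; this is $w = (1\, 2\, \cdots\, i_0)$ sending $1\mapsto i_0$, $2\mapsto 1,\dots, i_0\mapsto i_0-1$, which has exactly $i_0 - 1$ inversions and equals the reduced word $r_{i_0-1} r_{i_0-2}\cdots r_1$. (Here I use that $r_j$ transposes $e_j, e_{j+1}$, so $r_{i_0-1}\cdots r_1$ sends $e_1$ to $e_{i_0}$, hence the corresponding permutation sends position $1$'s label appropriately; one should be slightly careful about whether $w$ or $w^{-1}$ is the relevant object, but $\langle w\varpi_1,h\rangle$ only sees $w(1)$, and $l(w)=l(w^{-1})$, so this is not an obstruction.) Any $w$ with $h_{w(1)}<0$ must have $w(1) \geq i_0$, and the number of inversions of $w$ involving position $1$ alone is at least $w(1) - 1 \geq i_0 - 1$; this gives the lower bound $l(w) \geq i_0 - 1$ and shows the displayed $w$ is optimal. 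Thus $\ell_h^- (\varpi_1) = i_0 - 1 = \lfloor(n+2)/2\rfloor$.

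Then I would invoke Lemma \ref{Lemma lminush min bj}(iv), which gives $\ell^h = \min_j \ell_h^-(\varpi_j)$, and argue that the minimum is attained at $j=1$: for general $\varpi_j = b_j - \tfrac{j}{n+1}\varepsilon$ one has $\langle w\varpi_j, h\rangle = \sum_{k=1}^j h_{w(k)}$, and to make this sum negative with few inversions one is forced to move at least one of the first $j$ positions past the midpoint, which already costs roughly $i_0 - j$ inversions among those positions but additionally requires rearranging within the block $\{1,\dots,j\}$; a short monotonicity/counting argument shows $\ell_h^-(\varpi_j) \geq \ell_h^-(\varpi_1)$ for all $j$, with equality at $j=1$. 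Finally, for the uniqueness claim: among all $w$ of length exactly $i_0-1$ with $h_{w(1)}<0$, the inversion count forces $w(1) = i_0$ exactly (not larger, since $w(1)=i_0+t$ would already contribute $i_0 - 1 + t$ inversions) and forces $w$ to have \emph{no} inversions not involving position $1$, which pins down $w$ to be precisely the cycle $r_{i_0-1}\cdots r_1$. The main obstacle I anticipate is the step showing $j=1$ realizes the global minimum over all fundamental weights — i.e. that no partial sum $\sum_{k=1}^j h_{w(k)}$ can go negative ``more cheaply'' in inversions than the single coordinate $h_{w(1)}$; this needs a careful argument that the cost of dragging a block of indices leftward/rightward past the sign change is monotone in a way that never beats the $j=1$ case, and handling the even-$n$ case where the middle coordinate vanishes requires attention to the strict inequality.
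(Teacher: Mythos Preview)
Your approach is essentially the paper's: work in the $e_i$-coordinates, compute $\ell_h^-(\varpi_1)$ by locating the first index $i_0$ with $h_{i_0}<0$ and noting that the minimal-length permutation sending $e_1\mapsto e_{i_0}$ is the cycle $r_{i_0-1}\cdots r_1$, then argue $\ell_h^-(\varpi_j)\ge\ell_h^-(\varpi_1)$ for other $j$, and deduce uniqueness from inversion counting. Your bookkeeping around $i_0$ is slightly off (for $n$ even the number of negative coordinates is $n/2$, not $\lceil(n+1)/2\rceil$), but you correctly flag the need to verify $i_0-1=\lfloor(n+2)/2\rfloor$ directly, and that identity does hold in both parities.

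The one place where the paper does something different --- and simpler --- is exactly the step you flag as the main obstacle. Rather than a combinatorial inversion argument comparing block sums $\sum_{k\le j} h_{w(k)}$ to the single term $h_{w(1)}$, the paper observes that applying any simple reflection $r_k$ changes $\langle w\varpi_j,h\rangle$ by at most $1$ in absolute value (since $\langle\alpha_k,\rho\rangle=1$ and $\langle w\varpi_j,\alpha_k^\vee\rangle\in\{-1,0,1\}$ for type ${\bf A}_n$; the paper writes $\pm 2$, but the argument is unaffected). Hence any $w$ with $\langle w\varpi_j,h\rangle<0$ must satisfy $l(w)>\langle\varpi_j,\rho\rangle=\tfrac{1}{2}j(n+1-j)$. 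This quantity is increasing in $j$ on $1\le j\le\lfloor(n+1)/2\rfloor$ (and self-duality of $h=\rho$ reduces to this range), so the minimum over $j$ is at $j=1$, where the bound $l>n/2$ recovers exactly $\lfloor(n+2)/2\rfloor$. This step-size argument bypasses the block-dragging combinatorics entirely and resolves your anticipated difficulty in one line.
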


\begin{proof}
First note that, by Lemma \ref{Lemma lminush min bj}, the minimal value of $\ell_h^-$ is attained at some of the fundamental weights $\varpi_1,...,\varpi_{n}$. Moreover, since $h$ is self-dual, it suffices to consider $\varpi_1,...,\varpi_{\lfloor\frac{n+1}{2}\rfloor}$.

We now compute the value $\ell_h^-(\varpi_1)$ and show that it is equal to $l:=\lfloor(n+2)/2\rfloor$. We may compute with $e_1$ instead of $\varpi_1=e_1-\frac{1}{n+1}\varepsilon$. The Weyl group orbit of $e_1$ is the basis $e_1,...,e_{n+1}$. Note that $\langle w e_j,h\rangle<0$ if and only if $j\geq l$, so $l$ is the minimal index yielding a negative scalar product. The shortest element of the symmetric group sending $e_1$ to $e_l$ is the above $w$, and it has length $l$. So $\ell_h^-(\varpi_1)=l$.

It remains to show that for any fundamental weight $\varpi_j$ we have $\ell_h^-(\varpi_j)\geq l$. Observe that, for any $w\in W$ and any simple reflection $r_k$, the difference between $\langle w\varpi_j,h\rangle$ and $\langle r_k w\varpi_j,h\rangle$ is either $0$ or $\pm 2$. We have, for $j\in\{1,2,...,\frac{n+1}{2}\}$,
$$
\langle\varpi_j,h\rangle = n+(n-2)+...+(n-2j) = jn-j(j+1)=j(n-1-j) \;.
$$
So $\varpi_j$ needs to be moved by an element of length at least $j(n-1-j)/2$ in order for the scalar product with $h$ to become negative. This number is minimal for $j=1$. This proves the proposition.
\end{proof}

\begin{prop}\label{Prop An ell fundwei}
\begin{enumerate}
\item[{\rm (i)}] For $h=\varpi_1$, the value of $\ell^h$ is $1$ and is attained at $\lambda=\varpi_1$ for a unique Weyl group element, $r_1$.
\item[{\rm (ii)}] For $h=\varpi_1$ and $\lambda=\varpi_j$ with $j\in\{1,2,...,n\}$, the value of $\ell^-_{\varpi_1}(\varpi_j)$ is $j$ and is attained for the Weyl group element $r_1...r_j$.
\item[{\rm (iii)}] For $h=\varpi_j$ with $j\in\{1,...,\lfloor\frac{n+1}{2}\rfloor\}$, the value of $\ell^h$ is $j$ and it is attained at $\lambda=\varpi_1$ for a unique Weyl group element, $r_j...r_1$.
\end{enumerate}
\end{prop}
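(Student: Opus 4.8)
The plan is to reduce all three parts to a single elementary computation in the symmetric group $W=S_{n+1}$, acting on $\RR^{n+1}$ by permuting $e_1,\dots,e_{n+1}$. The first step is to rewrite the relevant pairings combinatorially: since $\varepsilon$ is $W$-invariant and orthogonal to every $\varpi_j$, a short computation gives, for $w\in W$ and $a,b\in\{1,\dots,n\}$,
$$
\langle w\varpi_a,\varpi_b\rangle \;=\; |\, w(\{1,\dots,a\})\cap\{1,\dots,b\} \,| \;-\; \frac{ab}{n+1}\,.
$$
Since the first term is a nonnegative integer, the sign of $\langle w\varpi_a,\varpi_b\rangle$ is controlled by whether $m:=|w(\{1,\dots,a\})\cap\{1,\dots,b\}|$ is strictly less than $\tfrac{ab}{n+1}$. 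The second step is the only length estimate needed: counting the inversions $(p,q)$ with $p\le a<q$ and $w(p)>b\ge w(q)$ — there are $a-m$ available $p$'s and $b-m$ available $q$'s, and each such pair is an inversion — one gets
$$
l(w)\;\ge\;(a-m)(b-m)\,,
$$
supplemented by the sharper boundary bound $l(w)\ge w(1)-1$ (with equality iff $w$ is increasing on $\{2,\dots,n+1\}$) and, applied to $w^{-1}$, $l(w)=l(w^{-1})\ge w^{-1}(1)-1$.

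Part (ii) then follows immediately. Taking $a=j$, $b=1$, the inequality $\langle w\varpi_j,\varpi_1\rangle<0$ says exactly $1\notin w(\{1,\dots,j\})$, i.e. $w^{-1}(1)>j$, so $l(w)\ge w^{-1}(1)-1\ge j$; and the $(j{+}1)$-cycle $w=r_1r_2\cdots r_j$ has length $j$ and satisfies $w^{-1}(1)=j{+}1$, so it attains the bound and $\ell^-_{\varpi_1}(\varpi_j)=j$. Part (i) is the case $j=1$: $\ell^-_{\varpi_1}(\varpi_1)=1$, and among the simple reflections only $r_1$ moves the index $1$, giving uniqueness; since $\langle\varpi_k,\varpi_1\rangle>0$ forces $\ell^-_{\varpi_1}(\varpi_k)\ge1$ for all $k$, Lemma \ref{Lemma lminush min bj}(iv) yields $\ell^{\varpi_1}=1$.

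For part (iii), Lemma \ref{Lemma lminush min bj}(ii) gives $\ell^-_{\varpi_j}(\varpi_1)=\ell^-_{\varpi_1}(\varpi_j)=j$, while the length-preserving bijection $w\mapsto w^{-1}$ of Lemma \ref{Lemma lminush min bj}(i) carries the minimizer $r_1\cdots r_j$ to $r_j\cdots r_1$; the sharp bound $l(w)\ge w(1)-1$ together with its equality case identifies $r_j\cdots r_1$ as the unique length-$j$ element $w$ with $\langle w\varpi_1,\varpi_j\rangle<0$. What still has to be shown is that $\ell^-_{\varpi_j}(\varpi_k)\ge j$ for every $k$, so that $\ell^{\varpi_j}=\min_k\ell^-_{\varpi_j}(\varpi_k)$ equals $j$; this is the only step that uses the hypothesis $j\le\lfloor\tfrac{n+1}{2}\rfloor$. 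Indeed, if $\langle w\varpi_k,\varpi_j\rangle<0$ and $m$ is the intersection size, then $m<\tfrac{jk}{n+1}$: from $k\le n$ we get $m\le j-1$, and from $n+1\ge 2j$ we get $m<\tfrac{k}{2}$, hence $k-m\ge m+1$ (trivially so when $m=0$). The length estimate then gives
$$
l(w)\;\ge\;(k-m)(j-m)\;\ge\;(m+1)(j-m)\;\ge\;j\,,
$$
the last inequality since $(m+1)(j-m)-j=m(j-m-1)\ge0$ for $0\le m\le j-1$.

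The only genuinely non-routine point is this last inequality — establishing $\ell^{\varpi_j}\ge j$ in part (iii), i.e. ruling out that some fundamental weight $\varpi_k$ with $k$ large produces a short Weyl-group element of negative pairing. It is the one place where the size estimate $(a-m)(b-m)$ on $l(w)$ must be weighed against the arithmetic constraint $m<jk/(n+1)$ and the hypothesis $2j\le n+1$. Everything else — the scalar-product identity, the reducedness of $r_1\cdots r_j$ and $r_j\cdots r_1$, and all uniqueness assertions — amounts to routine bookkeeping with one-line permutations.
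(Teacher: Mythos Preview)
Your proof is correct. Parts (i) and (ii) are essentially the same as in the paper: both describe the orbit $W\varpi_1=\{e_k-\tfrac{1}{n+1}\varepsilon:1\le k\le n+1\}$ and read off the shortest element reaching a negative pairing.

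For part (iii), however, your argument is genuinely different from the paper's and more economical. The paper splits into two cases according to whether $k\le\lfloor\tfrac{n+1}{2}\rfloor$ or $k>\lfloor\tfrac{n+1}{2}\rfloor$; in the first case it computes an explicit minimal length $\lfloor\tfrac{k+2}{2}\rfloor\bigl(j+1-\lfloor\tfrac{k-1}{2}\rfloor\bigr)$ for a permutation moving enough of $\{e_1,\dots,e_k\}$ past position $j$ and checks this is at least $j$, while in the second case it invokes the duality $\varpi_k^*=\varpi_{n+1-k}$ to reduce to the first. Your route avoids the case split entirely: the single inversion-count $l(w)\ge(k-m)(j-m)$, combined with the arithmetic $(m+1)(j-m)\ge j$ for $0\le m\le j-1$ and $k-m\ge m+1$ from the hypothesis $2j\le n+1$, delivers $\ell^-_{\varpi_j}(\varpi_k)\ge j$ uniformly in $k$. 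The paper's approach yields a bit more information --- sharper lower bounds for $\ell^-_{\varpi_j}(\varpi_k)$ in Case~1 --- but at the cost of heavier bookkeeping; your argument gives exactly what the proposition requires with no surplus computation.
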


\begin{proof}
Statement (i) follows from (ii), which in turn follows from the fact that 
$$
W\varpi_1=\left\{e_k-\frac{1}{n+1}\varepsilon: k\in\{1,...,n+1\}\right\}=\{\varpi_1\}\cup\{r_k...r_1\varpi_1: k\in\{1,...,n\}\}\;,
$$
where notably $r_k...r_1$ is the shortest element $w\in W$ for which $w\varpi_1=e_k-\frac{1}{n+1}\varepsilon$. We have
$$
\langle e_k-\frac{1}{n+1}\varepsilon , \varpi_j\rangle = \langle e_k,e_1+...+e_j-\frac{j}{n+1}\varepsilon\rangle 
= \begin{cases} 1-\frac{j}{n+1} & ,\;\; {\rm if}\; k\leq j ; \\ -\frac{j}{n+1} & ,\;\; {\rm if}\; k<j . \end{cases} 
$$
This implies parts (ii) and (i).

In order to prove (iii), we fix $j\in\{1,...,\lfloor\frac{n+1}{2}\rfloor\}$ and we need to show that $\ell^-_{\varpi_j}(\varpi_k)\geq\ell^-_{\varpi_j}(\varpi_1)=j$ for all $k\in\{2,...,n\}$. We consider two cases: $k$ smaller, or greater, than $\lfloor\frac{n+1}{2}\rfloor$.

Case 1: $k\leq \lfloor\frac{n+1}{2}\rfloor$. Since $\ell^-_{\varpi_j}(\varpi_k)=\ell^-_{\varpi_k}(\varpi_j)$ we may assume that $k\leq j$ without loss of generality. We have, for any $w\in W$,
\begin{align*}
\langle w\varpi_k , \varpi_j\rangle &= \left\langle w(e_1+...+e_k-\frac{k}{n+1}\varepsilon) , \varpi_j\right\rangle = 
\left\langle w(e_1+...+e_k) , \varpi_j\right\rangle \\ 
&= \left\langle w(e_1+...+e_k) , \frac{n+1-j}{n+1}(e_1+...e_j)-\frac{j}{n+1}(e_{j+1}+...+e_{n+1})\right\rangle.
\end{align*}
Thus the value of $\langle w\varpi_k , \varpi_j\rangle$ depends solely on the cardinality of the set $M(w)=\{e_{j+1},...,e_{n+1}\}\cap w\{e_1,...,e_{k}\}$, say $m(w)=\# M(w)$. In order for $\langle w\varpi_k , \varpi_j\rangle$ to be negative, since $n+1-j\geq j$ by assumption, it is necessary that $m(w)> \frac{k}{2}$. The length of a Weyl group element satisfying this condition is minimized for $M(w)=\{e_{j+1},...,e_{j+\lfloor\frac{k+2}{2}\rfloor}\}$ and we have
$$
\min\left\{l(w): m(w)> \frac{k}{2}\right\} = l((r_{j}...r_{\lfloor\frac{k-1}{2}\rfloor})...(r_{j+\lfloor\frac{k+1}{2}\rfloor}...r_k)) =
\left\lfloor\frac{k+2}{2}\right\rfloor\left(j+1-\left\lfloor\frac{k-1}{2}\right\rfloor\right) .
$$
The assumption $k\leq j$ implies $j\leq\lfloor\frac{k+2}{2}\rfloor(j+1-\lfloor\frac{k-1}{2}\rfloor)$. Hence $\ell^-_{\varpi_j}(\varpi_k)\geq\ell^-_{\varpi_j}(\varpi_1)=j$.

Case 2: $k> \lfloor\frac{n+1}{2}\rfloor>j$. Since $\ell^-_{\varpi_j}(\varpi_k)=\ell^-_{\varpi_k^*}(\varpi_j^*)=\ell^-_{\varpi_{n+1-j}}(\varpi_{n+1-k})=\ell^-_{\varpi_{n+1-k}}(\varpi_{n+1-j})$ we may assume that $n+1-k\leq j$ without loss of generality. 
Put $k'=n+1-k$. We have, for any $w\in W$,
\begin{align*}
\langle w\varpi_k , \varpi_j\rangle &= \left\langle w\left(\frac{k'}{n+1}\varepsilon-e_{k+1}-...-e_{n+1}\right) , \varpi_j\right\rangle = 
\langle w(-e_{k+1}-...-e_{n+1}) , \varpi_j\rangle \\ 
&= -\left\langle w(e_{k+1}+...+e_{n+1}) , \frac{n+1-j}{n+1}(e_1+...e_j)-\frac{j}{n+1}(e_{j+1}+...+e_{n+1})\right\rangle.
\end{align*}
The value of $\langle w\varpi_k , \varpi_j\rangle$ depends solely on the cardinality of the set $M'(w)=\{e_{1},...,e_{j}\}\cap w\{e_{k+1},...,e_{n+1}\}$, put $m'(w)=\# M'(w)$. In order for $\langle w\varpi_k , \varpi_j\rangle$ to be negative, since $n+1-j\geq j$ by assumption, it is necessary that $m'(w)> \frac{k}{2}$. Consider $M''(w)=\{e_{1},...,e_{\lfloor\frac{n+1}{2}\rfloor}\}\cap w\{e_{k+1},...,e_{n+1}\}$ and $m''(w)=\#M''(w)$. We clearly have $M'(w)\subseteq M''(w)$ and so $m'(w)\leq m''(w)$. Hence
$$
\min\left\{l(w):m'(w)>\frac{k}{2} \right\} \geq \min\left\{l(w):m''(w)>\frac{k}{2} \right\} = \left\lfloor\frac{k'+2}{2}\right\rfloor\left(\left\lfloor\frac{n-1}{2}\right\rfloor+1-\left\lfloor\frac{k'-1}{2}\right\rfloor\right) \;,
$$
where the last equality is derived from Case 1. Also from Case 1 we derive $\lfloor\frac{k'+2}{2}\rfloor(\lfloor\frac{n-1}{2}\rfloor+1-\lfloor\frac{k'-1}{2}\rfloor)\geq \lfloor\frac{n-1}{2}\rfloor\geq j$. This completes the proof.
\end{proof}

\begin{prop}\label{Prop An ell hirootAndSelfdualh}
Let $h\in\Lambda_{\rm sd}^+$, so that $h$ has the form
$$
h=\sum\limits_{j=1}^{\lfloor\frac{n+1}{2}\rfloor} a_j(\varpi_j+\varpi_{n+1-j}) \quad,\quad with\; a_1,...,a_{\lfloor\frac{n}{2}\rfloor}\in \ZZ_{\geq 0}\; and, \;for\;odd\;n,\; a_{\lfloor\frac{n+1}{2}\rfloor}\in \frac12\ZZ_{\geq 0}.
$$
Then $\ell^h=\ell_h^-(\varpi_1)=n+1-j_h$, where $j_h=\max\{j:a_j\ne 0\}$, and this value is
attained for the Weyl group element $r_{n+1-j_h} ...r_2r_1$.
\end{prop}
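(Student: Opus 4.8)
The plan is to turn the computation of $\ell^h$ into an explicit problem about $k$-element subsets of $\{1,\dots,n+1\}$ and the inversion statistic, and then to establish matching bounds, the lower bound being where the real work lies.

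First I would make two reductions. By Lemma \ref{Lemma lminush min bj}(iv) it suffices to evaluate $\ell^-_h(\varpi_k)$ on the fundamental weights, and since $h\in\Lambda^+_{\rm sd}$ conjugation by $w_0$ (using $h=-w_0h$) gives a length-preserving bijection $W^-(\varpi_k,h)\to W^-(\varpi_{n+1-k},h)$, so $\ell^-_h(\varpi_k)=\ell^-_h(\varpi_{n+1-k})$ and I may assume $1\le k\le\lfloor\tfrac{n+1}{2}\rfloor$. Next I would set up the explicit model: since $\varepsilon$ is $W$-fixed and $\langle\varepsilon,h\rangle=0$, one has $\langle w\varpi_k,h\rangle=\sum_{m\in S}f(m)$, where $S=w\{1,\dots,k\}$ runs over all $k$-subsets and $f(m)=\langle e_m,h\rangle$. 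From $h=\sum_j a_j(\varpi_j+\varpi_{n+1-j})$ and the elementary fact that $\langle e_m,\varpi_j+\varpi_{n+1-j}\rangle$ equals $1,0,-1$ according as $m\le j$, $j<m\le n+1-j$, $m\ge n+2-j$, I would record: $f$ is non-increasing (dominance of $h$), $f(n+2-m)=-f(m)$ (self-duality), and the sign of $f(m)$ is governed solely by $j_h$, namely $f(m)>0$ for $m\le j_h$, $f(m)=0$ for $j_h<m\le n+1-j_h$, $f(m)<0$ for $m\ge n+2-j_h$ (the lower-index terms cannot reverse the sign). Since $l(w_S)=\sum_{i=1}^k(s_i-i)$ is the number of inversions of the minimal $w$ realizing the subset $S$, this gives $\ell^-_h(\varpi_k)=\min\{l(w_S):|S|=k,\ \sum_{m\in S}f(m)<0\}$. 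For $k=1$ the orbit is the basis and $f(m)<0$ iff $m\ge n+2-j_h$, whose cheapest witness is $w=r_{n+1-j_h}\cdots r_1$ of length $n+1-j_h$; so $\ell^-_h(\varpi_1)=n+1-j_h$ and this $w$ attains it.

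The core of the argument is the lower bound $\ell^-_h(\varpi_k)\ge A:=n+1-j_h$ for all $k\le\lfloor\tfrac{n+1}{2}\rfloor$ --- equivalently, every $k$-subset $S$ with $\sum_{m\in S}f(m)<0$ has $l(w_S)\ge A$. Let $q=\#\{m\in S:f(m)<0\}$, so $1\le q\le j_h$. Using $f(n+2-m)=-f(m)$, the reflections $n+2-m$ of the negative elements of $S$ form $q$ distinct indices in $[1,j_h]$, whence $\sum_{m\in S,\,f(m)<0}|f(m)|\le f(1)+\cdots+f(q)$; combined with $\sum_{m\in S,\,f(m)<0}|f(m)|>\sum_{m\in S,\,f(m)>0}f(m)$ this forces $\{1,\dots,q\}\not\subseteq S$, so there is $t^\ast\in\{1,\dots,q\}\setminus S$. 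I would then bound $l(w_S)$ --- the number of pairs $(s,r)$ with $s\in S$, $r\notin S$, $s>r$ --- from below by inclusion--exclusion on two families: (a) pairs with $s$ in the negative zone, of which there are at least $q(A-k+q)$ since the $i$-th such $s$ lies above at least $A-k+q$ elements not in $S$; and (b) pairs with $r=t^\ast$, of which there are at least $k-q+1$ because at most $t^\ast-1\le q-1$ elements of $S$ are $\le t^\ast$. Their intersection has size exactly $q$, so $l(w_S)\ge q(A-k+q)+(k-q+1)-q$, which (using $k\le A$, which is exactly what the reduction $k\le\lfloor\tfrac{n+1}{2}\rfloor$ buys) equals $A+(q-1)(A-k+q-1)\ge A$. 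Together with Lemma \ref{Lemma lminush min bj}(iv) and the duality reduction this gives $\ell^h=\ell^-_h(\varpi_1)=n+1-j_h$, attained at $w=r_{n+1-j_h}\cdots r_1$.

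The main obstacle I anticipate is exactly this last step. The crude bounds --- from a single deep element of $S$, or from the bounded-step / Hilbert--Mumford argument that sufficed for the principal element in Proposition \ref{Prop An ell hPrince} --- fall short here, because $\ell^-_h(\varpi_k)$ really does depend on the magnitudes of the $a_j$ and not just on $j_h$. The productive idea is to squeeze the extra certificate $t^\ast\in\{1,\dots,q\}\setminus S$ out of the sign-and-symmetry structure of $f$ and feed it into the inversion count; making the inclusion--exclusion telescope cleanly to $(q-1)(A-k+q-1)$, and checking the degenerate cases ($q=1$, and $j_h=\lceil\tfrac{n+1}{2}\rceil$ where the zero zone is empty), is where I expect to spend the effort.
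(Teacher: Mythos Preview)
Your proposal is correct and follows the same scaffolding as the paper: reduce to the fundamental weights via Lemma~\ref{Lemma lminush min bj}(iv), halve the range of $k$ using $h=h^*$, compute $\ell^-_h(\varpi_1)=n+1-j_h$ directly, and then establish the lower bound for $k\ge2$. Where you diverge is in this last step. The paper dispatches it in one line, invoking ``an argument analogous to'' the cardinality bound $\#M(w)>k/2$ from Proposition~\ref{Prop An ell fundwei}; you instead run a self-contained inversion count, using the antisymmetry $f(n+2-m)=-f(m)$ to produce a missing index $t^\ast\in\{1,\dots,q\}\setminus S$ and then combining the inversions from the negative-zone elements with those over $t^\ast$ by inclusion--exclusion to get $l(w_S)\ge A+(q-1)(A-k+q-1)\ge A$. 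This is a genuinely different combinatorial device, and it buys robustness: when the coefficients $a_j$ are far from equal, the number $q$ of negative-zone elements in $S$ can be as small as $1$ even for $k\ge2$ (e.g.\ $n=5$, $h=10(\varpi_1+\varpi_5)+(\varpi_2+\varpi_4)$, $S=\{2,6\}$), so a direct ``$q$ must be large'' argument in the style of Proposition~\ref{Prop An ell fundwei} does not go through unmodified, whereas your $t^\ast$-trick handles all cases uniformly. The paper's route is terser; yours is more explicit and does not leave the reader to reconstruct the analogy.
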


\begin{proof}
Let $h$ be of the given form. In the basis $e_1,...,e_{n+1}$ we obtain
$$
h= \sum\limits_{j=1}^{j_h} b_j(e_j-e_{n+2-j}) \;,
$$
with $b_1\geq...\geq b_{j_h}\geq 1$. In terms of the $a_j$, we have $b_j=\sum\limits_{k=1}^{j} a_k$ for $1\leq j\leq \lfloor\frac{n}{2}\rfloor$ and, if $n+1=2m$ is even, $b_m=b_{m-1}+2a_m$.

Since, $\varpi_k=\varpi_{n+1-k}^*$ and $h=h^*$, it suffices to determine, or estimate, the numbers $\ell^-_h(\varpi_k)$ for $1\leq k\leq \lfloor\frac{n+1}{2}\rfloor$. As far as the scalar products are concerned, we can compute with $e_1+...+e_k$ instead of $\varpi_k$. We observe the following: if $\langle w\varpi_k,h\rangle<0$, then $\#M(w)>\min\{\frac{k}{2},\frac{j_h}{2}\}$.

For $k=1$, we obtain
$$
\ell^-_h(\varpi_1)=l(r_{n+1-j_h} ...r_2r_1) = n+1-j_h \;.
$$
For $k\geq 2$, an argument analogous to the one used in the proof of Proposition \ref{Prop An ell fundwei}, applied here for the set $M(w)=\{e_{n+2-j_h},...,e_{n+1}\}\cap w\{e_1,...,e_k\}$, shows that $\langle w\varpi_k,h\rangle<0$ implies $l(w)>n+1-j_h$. Thus $\ell^-_h(\varpi_k)>\ell^-_h(\varpi_1)$ and $\ell^h=\ell^-_h(\varpi_1)$.
\end{proof}

\begin{rem}
As a particular case of Example \ref{Exa shortroot}, for $h=\varpi_1+\varpi_n=e_1-e_n$, the highest root, we obtain the value $\ell^h=n$, and it is attained at every nonzero dominant weight $\lambda$ for a suitable Weyl group element. All such Weyl group elements are Coxeter elements.
\end{rem}

\begin{prop}\label{Lemma lminusAn}
\begin{enumerate}
\item[{\rm (i)}] The value of $\ell_{{\bf A}_n}$ is $1$ and is attained for $h=\varpi_1$, $\lambda=\varpi_1$ and $w=r_1$.
\item[{\rm (ii)}] The value of $\ell_{{\bf A}_n}^{\rm sd}$ is $\lfloor(n+2)/2\rfloor$ and is attained at $h=\varpi_1+...+\varpi_n$, the principal semisimple element, $\lambda=\varpi_1$, the first fundamental weight, and $w=r_{\lfloor(n+2)/2\rfloor} r_{\lfloor(n+2)/2\rfloor-1}... r_1$.
\end{enumerate}
\end{prop}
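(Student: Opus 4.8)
The plan is to simply assemble this statement from the computations already carried out for type ${\bf A}_n$. Part (i) is a direct consequence of Corollary \ref{Coro ellDeltaDual}, which reduces $\ell_\Delta$ to $\min\{\ell^{\varpi_j}: j=1,\dots,n\}$, together with Proposition \ref{Prop An ell fundwei}: part (i) of that proposition gives $\ell^{\varpi_1}=1$, attained at $\lambda=\varpi_1$ with the unique element $w=r_1$, and part (iii) gives $\ell^{\varpi_j}=j\geq 1$ for $j\leq\lfloor\frac{n+1}{2}\rfloor$, with the dual symmetry $\ell^{\varpi_j}=\ell^{\varpi_{n+1-j}}$ covering the remaining fundamental weights. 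Since $1$ is the smallest possible nonzero length, $\ell_{{\bf A}_n}=1$ and the minimum is realized exactly as stated.

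For part (ii), I would invoke Corollary \ref{Coro ellDeltaDual}, which gives $\ell_\Delta^{\rm sd}=\min\{\ell_h^-(\varpi_j): h\in\Lambda_{\rm sd}^+\setminus\{0\},\, j=1,\dots,n\}$, so it suffices to minimize over self-dual $h$. Proposition \ref{Prop An ell hirootAndSelfdualh} computes $\ell^h=\ell_h^-(\varpi_1)=n+1-j_h$ for every self-dual $h$, where $j_h=\max\{j:a_j\neq 0\}$ in the expansion $h=\sum a_j(\varpi_j+\varpi_{n+1-j})$. This quantity is minimized precisely when $j_h$ is as large as possible, i.e. $j_h=\lfloor\frac{n+1}{2}\rfloor$, which forces $n+1-j_h=n+1-\lfloor\frac{n+1}{2}\rfloor=\lceil\frac{n+1}{2}\rceil=\lfloor\frac{n+2}{2}\rfloor$. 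The principal semisimple element $\rho=\varpi_1+\dots+\varpi_n$ has all $a_j=1$ (with the appropriate half-integer convention for odd $n$), hence $j_\rho=\lfloor\frac{n+1}{2}\rfloor$, realizing the minimum; and Proposition \ref{Prop An ell hirootAndSelfdualh} identifies the witnessing element as $r_{n+1-j_h}\dots r_2 r_1 = r_{\lfloor(n+2)/2\rfloor}\dots r_1$, which agrees with the element named in the statement. (As a consistency check, Proposition \ref{Prop An ell hPrince} gives the same value and the same element directly for $h=\rho$.)

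I expect no genuine obstacle here: the proposition is essentially a bookkeeping corollary of the preceding three propositions, and the only point requiring a word of care is the index arithmetic $n+1-\lfloor\frac{n+1}{2}\rfloor=\lfloor\frac{n+2}{2}\rfloor$, which should be stated explicitly (both for even and odd $n$) so that the formula matches the one in Theorem \ref{Theo ellDelta Classification}. One should also remark that $\Lambda_{\rm sd}^+$ is nonzero for all $n\geq 1$, so the minimum is taken over a nonempty set.
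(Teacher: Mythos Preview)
Your proposal is correct and follows essentially the same route as the paper: the paper's proof simply cites Proposition \ref{Prop An ell fundwei} for part (i) and Propositions \ref{Prop An ell hirootAndSelfdualh} and \ref{Prop An ell hPrince} for part (ii), and you have spelled out in detail how those citations assemble into the claim. Your extra care with the index arithmetic $n+1-\lfloor\frac{n+1}{2}\rfloor=\lfloor\frac{n+2}{2}\rfloor$ and the half-integer coefficient of $\rho$ in the self-dual expansion is appropriate and not present in the paper's terse version.
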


\begin{proof}
Part (i) follows from Proposition \ref{Prop An ell fundwei}.
Part (ii) follows from Proposition \ref{Prop An ell hirootAndSelfdualh} and Proposition \ref{Prop An ell hPrince}.
\end{proof}

\subsection{Type ${\bf C}_n$}\label{Sect Cn}

Let $e_1,...,e_n$ be the standard basis of $\RR^n$, considered with the standard scalar product so that the basis is orthonormal. In these coordinates the relevant data for the root system ${\bf C}_n$ is the following (cf. \cite{Bourbaki-Lie-2}).

\begin{enumerate}
\item[$\bullet$] Simple roots: $\alpha_1=e_1-e_2, \alpha_2=e_2-e_3,...,\alpha_{n-1}=e_{n-1}-e_{n}, \alpha_n=2e_n$.
\item[$\bullet$] Fundamental weights: $\varpi_j=e_1+...+e_j$ for $j=1,...,n$.
\item[$\bullet$] Simple reflections: $r_1,...,r_n$, where, for $j=1,...,n-1$, $r_j$ transposes $e_j$ and $e_{j+1}$, and $r_n$ negates $e_n$. The remaining basis vectors are left fixed in either case.
\end{enumerate}

%\begin{align*}
%& r_1 = \begin{pmatrix} 0 & 1 & 0 & ... & 0 \\ 1 & 0 & 0 & ... & 0 \\ 0 & 0 & 1 & ... & 0 \\ .&.&.& ... & . \\
%0 & 0 & 0 & ... & 1 \end{pmatrix} \;,\quad
%r_2 = \begin{pmatrix} 1 & 0 & 0 & ... & 0 \\ 0 & 0 & 1 & ... & 0 \\ 0 & 1 & 0 & ... & 0 \\ .&.&.& ... & . \\
%0 & 0 & 0 & ... & 1 \end{pmatrix}\\
%& \textrm{For $1\leq j\leq n-1$, in $r_j$ the block}\; \begin{pmatrix} 0&1\\1&0 \end{pmatrix}\; \textrm{is shifted along the %diagonal to begin at the $j$-th raw and column.}\\
%& r_n = \begin{pmatrix} 1 & 0 & ... & 0 & 0 \\ 0 & 1 & ... & 0 & 0 \\ .&.&...&.&.\\ 0&0&...&1&0\\ 0&0&...&0&-1 \end{pmatrix}
%\end{align*}

%%%%%%%%%%%%%%
% $\bullet$ Procedure to obtain the $h$-vectors: they are of the form $h=Mv$,
% where $M$ is the $n\times n$-matrix given below and $v$ is a vector-column,
% whose entries are 1's and 0's (any distribution of 1's and 0's, just not all 0's).
% For example, the principal $h$ is obtained when all entries of $v$ are 1.
% $$
% M=\begin{pmatrix} 2& 2& ...& 2& 1\\ 0& 2& ...& 2& 1\\ .& .& ...& .& .\\ 0&0&...&2&1\\ 0&0&...&0&1\\ \end{pmatrix}\;.
% $$
%
% $\bullet$ Task: Fix $h$. For each $b_j$, compute the minimal length $l(w)$
% of an element $w=r_{j_1}*r_{j_2}*...*r_{j_l}$ such that $\langle w b_j , h \rangle < 0$.
% Call this minimal length $l_j$ for the given $b_j$. 
%
% \indent -- For each $j$, return $l_j$ and the $w$ at which it is attained.
%
% \indent -- For $h$ return the minimum over $j$, call it $l_h=\min\{l_1,...,l_n\}$ and an element $w$ at which it is attained.\\
%%%%%%%%%%%%%

\begin{prop}\label{Prop Cn ell fundwei}
\begin{enumerate}
\item[{\rm (i)}] For $h=\varpi_1=e_1$, the value of $\ell^h$ is $n$ and is attained at $\lambda=\varpi_n=e_1+...+e_n$ for a unique Weyl group element, $r_1r_2...r_n$.
\item[{\rm (ii)}] For $h=\varpi_j$ with $j\in\{2,...,n\}$, the value of $\ell^{h}$ is $2n-j$ and is attained at $\lambda=\varpi_1$ for a unique Weyl group element $r_j\dots r_{n}r_{n-1}\dots r_1$.
\end{enumerate}
\end{prop}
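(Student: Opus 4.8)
The plan is to work throughout in the orthonormal coordinates $e_1,\dots,e_n$, using the fact that the Weyl group $W$ of type ${\bf C}_n$ acts as the group of signed permutations of the $e_i$, i.e. it permutes the set $\{\pm e_1,\dots,\pm e_n\}$. The key reduction, by Lemma \ref{Lemma lminush min bj}(iv) and Corollary \ref{Coro ellDeltaDual}, is that $\ell^h=\min_j\ell_h^-(\varpi_j)$, and by part (ii) of that lemma we have the symmetry $\ell_h^-(\lambda)=\ell_\lambda^-(h)$; these two facts let us pass freely between moving $h$ and moving $\lambda$, and reduce everything to estimating $l(w)$ subject to a sign condition on a scalar product of the form $\langle w(e_1+\dots+e_k),\, e_1+\dots+e_j\rangle$.

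For part (i), I would first compute $\ell_{\varpi_1}^-(\varpi_k)$ for each $k$. Since $\langle w e_1,\dots\rangle$ only depends on which signed basis vector $w^{-1}$ sends $e_1$ to — more precisely, $\langle w\varpi_k, e_1\rangle = \langle \varpi_k, w^{-1}e_1\rangle$, and $w^{-1}e_1\in\{\pm e_i\}$ — the scalar product with $\varpi_k=e_1+\dots+e_k$ is negative exactly when $w^{-1}e_1 = -e_i$ for some $i\le k$. So $\ell_{\varpi_1}^-(\varpi_k) = \ell_{\varpi_k}^-(\varpi_1)$ is the length of the shortest signed permutation sending $e_1$ to $-e_i$ with $i\le k$; this is minimized by sending $e_1\mapsto -e_1$ directly, which is impossible in one step, and a short computation shows the minimal word is obtained by first moving $e_1$ down to position $n$ (length $n-k$... ) — here I need to be careful: the cheapest way to flip a sign requires reaching the last coordinate via $r_n$. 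One checks $\ell_{\varpi_1}^-(\varpi_n)$ is realized by $r_1r_2\cdots r_n$: this word carries $e_1$ successively to $e_2,\dots,e_n$ and then to $-e_n$, and $\langle -e_n,\varpi_n\rangle=-1<0$, while no shorter word flips any sign at all (every letter $r_1,\dots,r_{n-1}$ is a transposition, so at least one $r_n$ is needed, preceded by $n-1$ transpositions to bring a coordinate into last place). For $k<n$ one verifies $\ell_{\varpi_1}^-(\varpi_k)\ge n$ as well, by the same parity/position bookkeeping, so the minimum over $k$ is $n$, attained (uniquely) at $\varpi_n$ with $w=r_1\cdots r_n$; uniqueness follows because a reduced word of length $n$ flipping a sign must end in $r_n$ and the preceding transpositions are forced.

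For part (ii), fix $j\in\{2,\dots,n\}$ and estimate $\ell_{\varpi_j}^-(\varpi_k)=\ell_{\varpi_k}^-(\varpi_j)$ for all $k$. As in the type ${\bf A}_n$ argument (Proposition \ref{Prop An ell fundwei}), $\langle w\varpi_k,\varpi_j\rangle$ depends only on combinatorial data: how many of $w^{-1}e_1,\dots,w^{-1}e_j$ lie outside $\{e_1,\dots,e_k\}$ and how many carry a minus sign, since $\langle \pm e_i,\varpi_j\rangle$ is $1$ if $+e_i$ with $i\le j$, is $-1$ if $-e_i$ with $i\le j$, and $0$ otherwise. Making $\langle w\varpi_j,\varpi_1\rangle=\langle w\varpi_j,e_1\rangle$ negative is cheapest: we need $w^{-1}e_1=-e_i$ with $i\le j$, and the minimal length of such a signed permutation is $2n-j$ — the word $r_j r_{j+1}\cdots r_{n-1} r_n r_{n-1}\cdots r_1$ does it, moving $e_j$ out to position $n$ (cost $n-j$), flipping it (cost $1$), and bringing $-e_n$ back to position $1$ (cost $n-1$), total $(n-j)+1+(n-1)=2n-j$. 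One then shows $\ell_{\varpi_j}^-(\varpi_k)\ge 2n-j$ for all other $k$ by the same signed-coordinate counting, so $\ell^{\varpi_j}=2n-j$, attained at $\lambda=\varpi_1$; uniqueness of $w$ again follows by tracking that a reduced word of minimal length is forced at each step.

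The main obstacle I anticipate is the lower bound in part (ii): showing that for \emph{every} fundamental weight $\varpi_k$ one cannot do better than $2n-j$. The clean way is to set up, exactly as in the ${\bf A}_n$ proof, a ``defect'' quantity counting the signed-coordinate mismatches needed to turn the scalar product negative, observe that each simple reflection changes the relevant scalar product by $0$ or $\pm 2$ (and, crucially, that $r_n$ is the only generator that can change a sign, so sign-flips are ``expensive''), and then optimize the combinatorial cost over all $k$; the minimum is attained at $k=1$ because flipping a single coordinate's sign already suffices and any larger $k$ only raises the threshold that must be crossed.
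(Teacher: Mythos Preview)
Your approach is essentially the paper's: both work in the signed-permutation model of $W({\bf C}_n)$, compute $\ell^-_{\varpi_1}(\varpi_k)$ by tracking how $e_1$ reaches a negative basis vector, and use the symmetry $\ell^-_h(\lambda)=\ell^-_\lambda(h)$ to reduce part (ii) to estimating $\ell^-_{\varpi_j}(\varpi_k)$ over all $k$. Part (i) and the upper bound in (ii) are handled exactly as in the paper, with the same explicit words.

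The gap is in your lower bound for (ii). Your proposed ``defect'' argument, with each simple reflection changing the scalar product by $0$ or $\pm2$, is imported from the ${\bf A}_n$ proof (Proposition~\ref{Prop An ell hPrince}) and does not work cleanly here: transpositions $r_i$ with $i<n$ change $\langle w\varpi_k,\varpi_j\rangle$ by $0$ or $\pm1$, while $r_n$ can change it by $0$ or $\pm2$, so a naive step-count does not give the bound $2n-j$. The paper instead proceeds in two pieces. First, for $k=n$ it computes $\ell^-_{\varpi_j}(\varpi_n)$ \emph{exactly}: making $\langle w\varpi_n,\varpi_j\rangle<0$ requires negating at least $i=\lfloor\frac{j+2}{2}\rfloor$ of the first $j$ coordinates, and the optimal word has length $(n-j)i+\frac12 i(i+1)$, a quadratic in $j$ which one checks satisfies $\ell^-_{\varpi_j}(\varpi_n)\geq 2n-j$ with equality precisely at $j=3$. (So your heuristic ``any larger $k$ only raises the threshold'' is not strictly true: for $j=3$ the value $2n-3$ is also attained at $\lambda=\varpi_n$.) Second, for $2\leq k\leq n-1$ with $j\leq k$, the paper uses a trajectory argument: to make $\langle w\varpi_j,\varpi_k\rangle<0$, the $w$-trajectory of some $e\in\{e_1,\dots,e_j\}$ must (a) leave $\{e_1,\dots,e_k\}$, (b) acquire a minus sign through $-e_n$, and (c) return to $\{-e_1,\dots,-e_k\}$; steps (b) and (c) cost the same as in the $k=1$ case, while step (a) must be performed for at least $\lfloor\frac{j+2}{2}\rfloor$ elements, costing at least $(k-j)i+\frac12 i(i+1)\geq k$. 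This is the combinatorial work your sketch suppresses.
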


\begin{proof}
We begin with $h=\varpi_1=e_1$. We shall compute $\ell^-_{\varpi_1}(\varpi_j)$ for all $j$. The Weyl group orbit of our element is $W\varpi_1=We_1=\{\pm e_1,...\pm e_n\}$. For $w\in W$ with $l(w)<n$ we have $w\varpi_1\in\{e_1,...,e_{n-1}\}$ and hence $\langle \varpi_j,w\varpi_1\rangle\geq0$ for all $j$. Thus $\ell^-_{\varpi_1}(\varpi_j)\geq n$ for all $j$. The element $w_1=r_n...r_1$ is the unique $w$ with $l(w)=n$ for which $w\varpi_1$ has a negative sign. We have $\langle \varpi_n,w_1\varpi_1\rangle=\langle e_1+...+e_n,-e_n\rangle=-1<0$ and $\langle \varpi_j,w_1\varpi_1\rangle=0$ for $j<n$, hence
$$
\ell^{\varpi_1}=\ell^-_{\varpi_1}(\varpi_n)=l(r_1...r_n)=n \;.
$$
This already proves part (i). We continue with the computation of the values $\ell^-_{\varpi_1}(\varpi_j)$ for $j<n$, as they are relevant the the remaining parts due to the symmetry. As established, the shortest element $w$ such that $w\varpi_1\in\{-e_1,...,-e_n\}$ is $w_1$. The shortest element $w$ for which $ww_1\varpi_1$ is not orthogonal to $\varpi_j$ is $w_2=r_j...r_{n-1}$. We deduce that $w_{j,1}=w_2w_1$ is the shortest element with $\langle \varpi_j,w_{j,1}\varpi_1\rangle<0$ and obtain
$$
\ell^-_{\varpi_1}(\varpi_j)=l(r_1...r_nr_{n-1}...r_j) = n+n-j=2n-j \;, \;\;{\rm for}\;\; j=1,...,n.
$$

Since $\ell^-_{\varpi_j}(\varpi_1)=\ell^-_{\varpi_1}(\varpi_j)$, we can deduce that $\ell^{\varpi_j}\leq 2n-j$. We shall show that equality holds for $j\geq 2$.

Next, we address $h=\varpi_n=e_1+...+e_n$ and compute the numbers $\ell^-_{\varpi_n}(\varpi_j)$ for $j\geq 2$. Given $w\in W$, in order for $\langle\varpi_j,w\varpi_n\rangle$ to be negative, $w$ needs to turn into $-1$ at least $i=\lfloor\frac{j+2}{2}\rfloor$ of the coefficients of $e_1,...,e_j$. In order to minimize the length, the signs of $e_{\lfloor\frac{j+1}{2}\rfloor},...,e_j$ ought to be negated. This is achieved in a minimal way by the element $(r_j...r_n)(r_{j-1}...r_n)...(r_{\lfloor\frac{j+1}{2}\rfloor}...r_n)$. Thus
\begin{align*}
\ell^-_{\varpi_n}(\varpi_j)&=l((r_j...r_n)(r_{j-1}...r_n)...(r_{\lfloor\frac{j+1}{2}\rfloor}...r_n))\\
 &=(n-j+1)+(n-j+2)+...+(n-j+i) = (n-j)i + \frac12 i(i+1) \;.
\end{align*}

For $j=2$, we get $\ell^-_{\varpi_n}(\varpi_2)=(n-2)2 + 3 = 2n-1> 2n-2 = \ell^-_{\varpi_1}(\varpi_2)$.

For $j=3$, we get $\ell^-_{\varpi_n}(\varpi_3)=(n-3)2 + 3 = 2n-3 = \ell^-_{\varpi_1}(\varpi_3)$.

For $j=4$, we get $\ell^-_{\varpi_n}(\varpi_4)=(n-4)3 + 6 = 2n+6 > 2n-4 = \ell^-_{\varpi_1}(\varpi_4)$.

Since $\ell^-_{\varpi_n}(\varpi_j)$ is quadratic in $j$, we deduce that $\ell^-_{\varpi_n}(\varpi_j)\geq\ell^-_{\varpi_1}(\varpi_j)$ for $j\in\{2,...,n\}$, with equality holding if and only if $j=3$. We have also shown that $\ell^-_{\varpi_1}(\varpi_j)\geq\ell^-_{\varpi_1}(\varpi_n)$, with equality holding if and only if $j=n$. It follows that
$$
\ell^{\varpi_n} = \ell^{-}_{\varpi_n}(\varpi_1) = l(r_n...r_1) = n \;.
$$
This proves part (ii) for $j=n$.

We now consider the remaining case $h=\varpi_k$ with $k\in\{2,...,n-1\}$, and estimate the numbers $\ell^-_{\varpi_k}(\varpi_j)$ for $j\in\{2,...,n-1\}$. Since $\ell^-_{\varpi_k}(\varpi_j)=\ell^-_{\varpi_j}(\varpi_k)$ we may assume that $j\leq k$. We shall prove the inequality $\ell^-_{\varpi_j}(\varpi_1)<\ell^-_{\varpi_j}(\varpi_k)$ by comparing the trajectories of basis vectors under any Weyl groups elements $w_{k,1},w_{k,j}\in W$ whose lengths give the respective values of $\ell^-_{\varpi_k}$. The element $w_{k,1}$ was constructed above. For any $w$, the values of the scalar products $\langle w\varpi_1,\varpi_k\rangle$ and $\langle w\varpi_j,\varpi_k\rangle$ are determined by the respective intersections $\{\pm e_1,...,\pm e_k\}\cap w\{e_1\}$ and $\{\pm e_1,...,\pm e_k\}\cap w\{e_1,...,e_j\}$. In order for either scalar product to be negative, the $w$-trajectory of some elements $e\in\{e_1,...,e_j\}$ must fulfill the following three steps: (a) leave the set $\{e_1,...,e_k\}$ by reaching $\{e_{k+1}\}$; (b) acquire a negative sign by passing through $\{-e_n\}$; (c) return to $\{-e_1,...,-e_k\}$. In both cases steps (a),(b),(c) have to be performed for at least one $e$, and in the case of $\varpi_1=e_1$ this suffices. Thus it suffices to show that necessary performances of step (a) require more length for $\varpi_j$ than for $\varpi_1$. For $\varpi_1$, this length is $k=l(r_k...r_1)$. The inequality $\langle w_{k,j}\varpi_j,\varpi_k\rangle<0$ means that the set $\{\pm e_1,...,\pm e_k\}\cap w_{k,j}\{e_1,...,e_j\}$ has strictly more negative than positive signs. Since $k<n$, it follows that the $w_{k,j}$-trajectories of at least $i=\lfloor\frac{j+2}{2}\rfloor$ elements of the set $\{e_1,...,e_j\}$ must leave the set $\{e_1,...,e_k\}$. The required number of simple transpositions is $(k-j+1)+(k-j+2)+...+(k-j+i) = (k-j)i+\frac12 i(i+1)$. An elementary calculation yields $k\leq (k-j)i+\frac12 i(i+1)$. Furthermore, equality holds if and only if $j=k=3$. Thus we obtain
$$
\ell^-_{\varpi_j}(\varpi_1)<\ell^-_{\varpi_j}(\varpi_k) \quad,\quad \forall j,k\in\{2,...,n-1\} \;.
$$
Summarizing the above observations on $\ell^-_{\varpi_j}$, we obtain
$$
\ell^{\varpi_j}=\ell^-_{\varpi_j}(\varpi_1) = l(r_j...r_n...r_1) = 2n-j \;,
$$
the value also being attained, for the special case $j=3$, at $\ell^-_{\varpi_3}(\varpi_n)=l((r_n...r_2)(r_n...r_3))$. This completes the proof of the proposition.
\end{proof}

\begin{coro}\label{Coro Cn ell}
The value of $\ell_{{\bf C}_n}$ is $n$ and is attained for $h=\varpi_1$, $\lambda=\varpi_n$, $w=r_1...r_n$ as well as for the dual combination $h=\varpi_n$, $\lambda=\varpi_1$, $w=r_n...r_1$.
\end{coro}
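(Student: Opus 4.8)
The plan is to deduce the statement directly from Proposition \ref{Prop Cn ell fundwei} together with the reduction of $\ell_\Delta$ to fundamental weights. By Corollary \ref{Coro ellDeltaDual},(i) we have $\ell_{{\bf C}_n}=\min\{\ell^{\varpi_1},\dots,\ell^{\varpi_n}\}$, so it suffices to compare the values $\ell^{\varpi_j}$ for $j=1,\dots,n$, all of which were already computed in the Proposition. Concretely, I would first record $\ell^{\varpi_1}=n$ from part (i) and $\ell^{\varpi_j}=2n-j$ for $j\in\{2,\dots,n\}$ from part (ii).

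Next I would invoke the elementary inequality $2n-j\geq n$ for $j\leq n$, with equality precisely when $j=n$. Combined with $\ell^{\varpi_1}=n$, this shows that the minimum of $\{\ell^{\varpi_1},\dots,\ell^{\varpi_n}\}$ equals $n$ and is attained exactly at $j=1$ and $j=n$. It then remains only to read off the witnesses: for $j=1$, part (i) of the Proposition gives $h=\varpi_1$, $\lambda=\varpi_n$, $w=r_1r_2\dots r_n$; for $j=n$, part (ii) gives $h=\varpi_n$, $\lambda=\varpi_1$, $w=r_n\dots r_1$. These two configurations are interchanged by the length-preserving involution of Lemma \ref{Lemma lminush min bj},(i)--(ii), namely $w\mapsto w^{-1}$ sending $W^-(\lambda,h)$ to $W^-(h,\lambda)$ and yielding $\ell^-_h(\lambda)=\ell^-_\lambda(h)$; this is the ``dual combination'' referred to in the statement (and is consistent with $w_0=-1$ in type ${\bf C}_n$, so that every weight is self-dual).

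I do not anticipate any genuine obstacle here: the corollary is a bookkeeping consequence of Proposition \ref{Prop Cn ell fundwei}. The only step meriting an explicit line is the comparison $2n-j\geq n$ and the identification of the equality case $j=n$, which is immediate, and the observation that the two minimizing triples are exactly those produced by the two parts of the Proposition.
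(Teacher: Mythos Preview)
Your proposal is correct and is exactly the intended derivation: the paper states this corollary without proof immediately after Proposition~\ref{Prop Cn ell fundwei}, so it is meant to follow by precisely the comparison $\ell^{\varpi_1}=n$ and $\ell^{\varpi_j}=2n-j\geq n$ for $2\leq j\leq n$ that you spell out, together with Corollary~\ref{Coro ellDeltaDual},(i). The identification of the two witnessing triples and their relation via the involution of Lemma~\ref{Lemma lminush min bj} is also accurate.
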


\begin{prop}\label{Prop Cn ell hPrince}
For $h=\rho=\varpi_1+...+\varpi_n=ne_1+(n-1)e_2+...+2e_{n-1}+e_n$, or $h=\rho^\vee=\varpi_1^\vee+...+\varpi_n^\vee=\frac12((2n-1)e_1+(2n-3)e_2+...+3e_{n-1}+e_n)$, the value of $\ell^\rho$ is $n$ and is attained at $\lambda=\varpi_1=e_1$ for a unique Weyl group element, $r_nr_{n-1}...r_1$.
\end{prop}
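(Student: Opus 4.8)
The plan is to reduce everything to the fundamental weights and then read off the answer from Proposition~\ref{Prop Cn ell fundwei}. By Lemma~\ref{Lemma lminush min bj}(iv) we have $\ell^\rho=\min\{\ell^-_\rho(\varpi_j):j=1,\dots,n\}$, and similarly $\ell^{\rho^\vee}=\min_j\ell^-_{\rho^\vee}(\varpi_j)$; since by Remark~\ref{Rem h in Lambda} the element $\rho^\vee=\varpi_1^\vee+\dots+\varpi_n^\vee$ is a positive combination $\sum_k c_k\varpi_k$ with all $c_k>0$, the two cases will be handled by the same computation, which uses only that $h$ is regular dominant. So it suffices to determine $\ell^-_\rho(\varpi_j)$ for every $j$.

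For the lower bound I would combine the symmetry $\ell^-_\rho(\varpi_j)=\ell^-_{\varpi_j}(\rho)$ of Lemma~\ref{Lemma lminush min bj}(ii) with $\rho=\varpi_1+\dots+\varpi_n$ and the iterated subadditivity of Lemma~\ref{Lemma lminush min bj}(iii), getting $\ell^-_\rho(\varpi_j)=\ell^-_{\varpi_j}(\varpi_1+\dots+\varpi_n)\ge\min_k\ell^-_{\varpi_j}(\varpi_k)=\ell^{\varpi_j}$, the last equality being Lemma~\ref{Lemma lminush min bj}(iv) again. By Proposition~\ref{Prop Cn ell fundwei}, $\ell^{\varpi_1}=n$ and $\ell^{\varpi_j}=2n-j\ge n$ for $2\le j\le n$, hence $\ell^-_\rho(\varpi_j)\ge n$ for all $j$ and $\ell^\rho\ge n$; the same bound holds for $\rho^\vee$ since $\varpi^\vee_k$ is proportional to $\varpi_k$. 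This passage through Proposition~\ref{Prop Cn ell fundwei} is the only genuinely non-obvious step; should that route prove unavailable in the exact form needed, the fallback is a direct root count on $w(e_1+\dots+e_j)=\sum_i\epsilon_ie_{\sigma(i)}$, showing that $\langle\,\cdot\,,\rho\rangle<0$ forces a negated coordinate of large enough magnitude whose attached roots already overspend the budget $n$ — but this is more laborious.

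For the matching upper bound and the explicit minimizer I would take $w=r_nr_{n-1}\cdots r_1$ and compute with the simple reflections exactly as in Proposition~\ref{Prop Cn ell fundwei}: this gives $we_1=-e_n$, so $\langle w\varpi_1,\rho\rangle=\langle -e_n,\rho\rangle=-1<0$ (resp.\ $-\tfrac12<0$ for $\rho^\vee$), and one checks that the expression is reduced — the positive roots it sends to negative ones being precisely $2e_1$ together with $e_1-e_k$ for $k=2,\dots,n$, so $l(w)=n$. Hence $\ell^-_\rho(\varpi_1)\le n$, and combined with the previous paragraph $\ell^-_\rho(\varpi_1)=n=\ell^\rho$.

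It remains to prove uniqueness for $\lambda=\varpi_1$. Suppose $l(w)=n$ and $\langle w\varpi_1,\rho\rangle<0$; since $We_1=\{\pm e_1,\dots,\pm e_n\}$ and $\rho$ is regular dominant, necessarily $we_1=-e_p$ for some $p$. Then $\Delta^+\cap w^{-1}\Delta^-$ contains $2e_1$ and, for each $k\ge2$, the member of $\{e_1-e_k,\,e_1+e_k\}$ which $w$ sends to $-e_p-e_{|we_k|}$; these are $n$ distinct positive roots, so $l(w)=n$ forces them to exhaust $\Delta^+\cap w^{-1}\Delta^-$. Requiring that nothing else be inverted pins $w$ down in steps: the partner roots $e_{|we_k|}-e_p$ must stay positive, so $|we_k|<p$ for all $k$ and hence $p=n$; the roots $2e_k$ with $k\ge2$ are not inverted, so $we_k>0$ for $k\ge2$; and the roots $e_i-e_k$ with $2\le i<k$ are not inverted, so $|we_2|<\dots<|we_n|$, i.e.\ $we_k=e_{k-1}$. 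The resulting signed permutation is exactly $r_nr_{n-1}\cdots r_1$, which completes the argument.
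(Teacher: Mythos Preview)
Your proof is correct. The paper's own proof is a single sentence, ``The computations are the same as for $h=\varpi_n$'', meaning one is expected to rerun the direct combinatorial analysis of Proposition~\ref{Prop Cn ell fundwei} (counting sign changes needed to make $\langle w\varpi_j,h\rangle<0$) with $\rho$ in place of $\varpi_n$. Your route is genuinely different and, to my mind, cleaner: rather than redoing any combinatorics you invoke the symmetry and subadditivity of Lemma~\ref{Lemma lminush min bj} to obtain the formal lower bound $\ell^-_\rho(\varpi_j)\ge\ell^{\varpi_j}$ and then simply quote the already-computed values $\ell^{\varpi_j}\ge n$ from Proposition~\ref{Prop Cn ell fundwei}. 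This sidesteps a subtlety in the paper's one-liner --- for $\rho$ the coordinates have unequal weights, so the ``more than half the signs negated'' criterion used for $\varpi_n$ is not literally the correct negativity condition, and one would otherwise have to re-justify the length estimates. Your reduction also handles $\rho^\vee$ uniformly, since only positivity of the coefficients in $\sum c_k\varpi_k$ is used. Finally, you supply an explicit uniqueness argument for the Weyl group element, which the paper does not spell out; that argument is correct as written.
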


\begin{proof}
The computations are the same as for $h=\varpi_n$.
\end{proof}

\subsection{Type ${\bf B}_n$}\label{Sect Bn}

According to Remark \ref{Rem h in Lambda} and Corollary \ref{Coro ellDeltaDual}, since the simple root systems of type ${\bf B}_n$ and ${\bf C}_n$ are dual to each other, and $w_0=-1$ holds in either case, we have $\ell_{{\bf B}_n}=\ell_{{\bf C}_n}=\ell^{\rm sd}_{{\bf B}_n}=\ell^{\rm sd}_{{\bf C}_n}$. The individual values at the fundamental weights $\varpi_j$ of ${\bf B}_n$ are given by $\ell^{\varpi_j}=\ell^{\varpi_j^\vee}$, where $\varpi_j^\vee$ is the $j$-th fundamental weight of ${\bf C}_n$. The values of $\ell^{\varpi_j^\vee}$ are given in Proposition \ref{Prop Cn ell fundwei}. The value of $\ell^h$ for the principal semisimple element $h=\rho^\vee$ of ${\bf B}_n$ is also computed in Proposition \ref{Prop Cn ell hPrince}. With this the collected data for ${\bf B}_n$ is complete.

\subsection{Type ${\bf D}_n$}\label{Sect Dn}

Let $n\geq 4$. Let $e_1,...,e_n$ be the standard orthonormal basis of $\RR^n$ endowed with the standard scalar product. In these coordinates the relevant data for the root system ${\bf C}_n$ is the following (cf. \cite{Bourbaki-Lie-2}).

\begin{enumerate}
\item[$\bullet$] Simple roots: $\alpha_1=e_1-e_2$, $\alpha_2=e_2-e_3,...,$ $\alpha_{n-1}=e_{n-1}-e_{n}$, $\alpha_n=2e_n$.
\item[$\bullet$] Fundamental weights: $\varpi_j=e_1+...+e_j$ for $j=1,...,n-2$,
$$
\varpi_{n-1} = \frac12 (e_1+e_2+...+e_{n-1}-e_{l}) \;,\; \varpi_l = \frac12 (e_1+e_2+...+e_{n-1}+e_{n}) \;.
$$
\item[$\bullet$] Simple reflections: $r_1,...,r_n$, where for $j=1,...,n-1$, $r_j$ transposes $e_j$ and $e_{j+1}$, and $r_n$ negates and transposes $e_{n-1}$ and $e_n$. The remaining basis vectors are left fixed in either case. 
\item[$\bullet$] $\Lambda^+_{\rm sd}=\{b_1\varpi_1+...+b_{n-2}\varpi_{n-2}+b_{n-1}(\varpi_{n-1}+\varpi_n): b_j\in \ZZ_{\geq0}\}=\{a_1e_1+...+a_{n-1}e_{n-1}: a_j\in \ZZ, a_1\geq a_2\geq ... \geq a_{n-1}\geq 0\}$.
\end{enumerate}

%For $1\leq j\leq n-1$, in $r_j$ the block $\begin{pmatrix} 0&1\\1&0 \end{pmatrix}$ is shifted
% along the diagonal to begin at the $j$-th raw and column:
%\begin{align*}
%& r_1 = \begin{pmatrix} 0 & 1 & 0 & ... & 0 \\ 1 & 0 & 0 & ... & 0 \\ 0 & 0 & 1 & ... & 0 \\ .&.&.& ... & . \\
%0 & 0 & 0 & ... & 1 \end{pmatrix} \;,\quad
%r_2 = \begin{pmatrix} 1 & 0 & 0 & ... & 0 \\ 0 & 0 & 1 & ... & 0 \\ 0 & 1 & 0 & ... & 0 \\ .&.&.& ... & . \\
%0 & 0 & 0 & ... & 1 \end{pmatrix}\\
%& r_n = \begin{pmatrix} 1 &  & & & \\  & \ddots & & & \\  & & 1 & & \\  & & & 0 & -1 \\  & & & -1 & 0 \\ \end{pmatrix}
%\end{align*}

%%%%%%%%%%%%%%%%%
% $\bullet$ Procedure to obtain the $h$-vectors: they are of the form $h=Mv$,
% where $M$ is the $n\times n$-matrix given below and $v$ is a vector-column,
% whose entries are 1's and 0's (any distribution of 1's and 0's, just not all 0's).
% For example, the principal $h$ is obtained when all entries of $v$ are 1.
% $$
% M = \begin{pmatrix} 2& 2& ...&2 &1 & 1 \\ 0& 2& ...&2 &1 & 1 \\ .& .& ...&. &. & . \\
% 0& 0& ...&2 & 1& 1 \\ 0& 0& ...&0 & 1& 1 \\ 0& 0& ...&0 & -1& 1 \end{pmatrix} \;.
% $$
%
% $\bullet$ Task: Fix $h$. For each $b_j$, compute the minimal length $l(w)$ of an
% element $w=r_{j_1}*r_{j_2}*...*r_{j_l}$ such that $\langle w b_j , h \rangle < 0$.
% Call this minimal length $l_j$ for the given $b_j$. 
%
% \indent -- For each $j$, return $l_j$ and the $w$ at which it is attained.
%
% \indent -- For $h$ return the minimum over $j$, call it $l_h=\min\{l_1,...,l_n\}$ and an element $w$ at which it is attained. 
%%%%%%%%%%%%%%%%%

\begin{prop}\label{Prop Dn ell fundwei}
\begin{enumerate}
\item[{\rm (i)}] For $h=\varpi_1$, the value of $\ell^h$ is $n-1$ and is attained at two fundamental weights $\lambda=\varpi_{n-1}$ and $\varpi_{n}$ for the Weyl group elements $r_1...r_{n-1}$ and $r_1...r_{n-2}r_n$, respectively.
\item[{\rm (ii)}] For $h=\varpi_2$, the value of $\ell^h$ is $2n-3$ and is attained at three fundamental weights $\lambda=\varpi_1,\varpi_{n-1}$ and $\varpi_n$ for the Weyl group elements $r_{2}...r_{n-2}r_n...r_1$, $r_2...r_{n-2}r_nr_1...r_{n-1}$ and $r_2...r_{n-1}r_1...r_{n-2}r_{n}$, respectively.
\item[{\rm (iii)}] For $h=\varpi_3$, the value of $\ell^h$ is $2n-5$ and is attained at two fundamental weights $\lambda=\varpi_{n-1}$ and $\varpi_n$ for the Weyl group elements $r_3...r_{n-2}r_nr_2...r_{n-1}$ and $r_3...r_{n-1}r_2...r_{n-2}r_{n}$, respectively.
\item[{\rm (iv)}] If $n\geq 6$, then, for $h=\varpi_j$ with $j\in\{4,...,n-2\}$, the value of $\ell^h$ is $2n-j-1$ and is attained at $\lambda=\varpi_1=e_1$ for the Weyl group element $r_{j}...r_{n-2}r_n...r_1$.
\item[{\rm (v)}] If $n=5$, then
$$
3=\ell^{\varpi_5}=\ell^-_{\varpi_5}(\varpi_4)=l(r_5r_3r_4)=l(r_4r_3r_5)=\ell^-_{\varpi_4}(\varpi_5)=\ell^{\varpi_4}\;.
$$
\item[{\rm (vi)}] If $n\ne 5$, then, for $h=\varpi_{n-1}$, resp. $\varpi_n$, the value of $\ell^h$ is $n-1$ and is attained at $\lambda=\varpi_1$ for the Weyl group element $r_{n-1}...r_1$, resp. $r_nr_{n-2}...r_1$.
\end{enumerate}
\end{prop}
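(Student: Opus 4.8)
The plan is to reduce the whole statement to computing, for pairs of fundamental weights, the numbers $\ell^-_{\varpi_j}(\varpi_k)=\min\{l(w):\langle w\varpi_k,\varpi_j\rangle<0\}$ and then to minimize over $k$. By Lemma \ref{Lemma lminush min bj}(iv), $\ell^{\varpi_j}=\min_{1\le k\le n}\ell^-_{\varpi_j}(\varpi_k)$, and by part (ii) of that lemma $\ell^-_{\varpi_j}(\varpi_k)=\ell^-_{\varpi_k}(\varpi_j)$; moreover $\ell^-_{\varpi_j}(\varpi_k)=\ell^-_{\varpi_j^*}(\varpi_k^*)$, which for odd $n$ interchanges $\varpi_{n-1},\varpi_n$ and fixes the others (when $n$ is even, $w_0=-1$ and there is nothing to do). So it suffices to examine a few pairs $(j,k)$. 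I work throughout in the coordinates $e_1,\dots,e_n$ above: $W$ acts by permutations followed by an even number of sign changes; for $k\le n-2$ the orbit $W\varpi_k$ consists of the vectors $\sum_{i\in S}\pm e_i$ over $k$-element subsets $S$; and $W\varpi_{n-1}$ (resp.\ $W\varpi_n$) consists of the half-sums $\frac12\sum_i\epsilon_ie_i$ with an odd (resp.\ even) number of $\epsilon_i=-1$.

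The mechanism is the following. Since $\varpi_j=e_1+\dots+e_j$ for $j\le n-2$, the sign of $\langle w\varpi_k,\varpi_j\rangle$ depends only on the difference between the number of positive and of negative $e_i$-coefficients of $w\varpi_k$ over $i\le j$; similarly $\langle w\varpi_k,\varpi_{n-1}\rangle$ and $\langle w\varpi_k,\varpi_n\rangle$ are governed by the total signed count with the last coordinate weighted by $\mp$. The scalar product is negative exactly when the relevant slots carry a surplus of negative coefficients, and the only simple reflection that manufactures negative coefficients is $r_n$, which replaces the two coefficients in positions $n-1,n$ by the negatives of one another: two fresh negatives when they were both positive, nothing or two removed otherwise. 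So for each pair I track, along a reduced word, the trajectory of the coordinates that must be shuttled to the tail positions, negated by $r_n$ in pairs, and brought back into the first $j$ slots, subject to the parity constraint; the explicit words listed in the statement (such as $r_1\cdots r_{n-1}$, $r_1\cdots r_{n-2}r_n$, $r_2\cdots r_{n-2}r_n\cdots r_1$, $r_3\cdots r_{n-2}r_nr_2\cdots r_{n-1}$, $r_j\cdots r_{n-2}r_n\cdots r_1$, $r_{n-1}\cdots r_1$, $r_nr_{n-2}\cdots r_1$) furnish the matching upper bounds. For instance one finds $\ell^-_{\varpi_1}(\varpi_k)=2n-1-k$ for $1\le k\le n-2$, whereas $\ell^-_{\varpi_1}(\varpi_{n-1})=\ell^-_{\varpi_1}(\varpi_n)=n-1$ (a cheap route of a negative coefficient of the spinor weight into position $1$), which already gives $\ell^{\varpi_1}=n-1$, attained only at $\varpi_{n-1},\varpi_n$. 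The remaining values $2n-3$ at $j=2$, $2n-5$ at $j=3$ (here the spinor $\lambda$'s beat $\varpi_1$), $2n-j-1$ for $4\le j\le n-2$ (attained at $\varpi_1$), and $n-1$ at $j\in\{n-1,n\}$ for $n\ne5$ come out in the same way.

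For the lower bounds I argue, in the spirit of the proof of Proposition \ref{Prop Cn ell fundwei}, that a single simple reflection changes the relevant signed-overlap count by a bounded amount and, being of type $r_1,\dots,r_{n-1}$ unless it equals $r_n$, changes no sign at all; hence producing the required excess of negatives, together with the unavoidable cost of moving coordinates to positions $n-1,n$ and back, forces the stated minimal length. Combined with the symmetry and duality reductions, this pins down $\ell^{\varpi_j}$ and identifies the extremal $(\lambda,w)$. The rank-$5$ anomaly of part (v) I handle by direct inspection: $\langle r_5r_3r_4\,\varpi_4,\varpi_5\rangle<0$ is immediate in coordinates, while a word of length $\le2$ cannot work, since going from the single negative of $\varpi_4$ to the three negatives that $\varpi_5$ requires needs one effective use of $r_5$, and that is possible only after two transpositions have cleared positions $4,5$ of the existing negative; hence $\ell^{\varpi_4}=\ell^{\varpi_5}=3$, strictly below the value $n-1=4$ of part (vi), which is exactly what makes ${\bf D}_5$ exceptional in Theorem \ref{Theo Values ellDelta}.

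The main obstacle will be the bookkeeping for the spinor weights $\varpi_{n-1},\varpi_n$, whether they occur as $h$ or as $\lambda$: there $r_n$ entangles a sign flip with a transposition and the attainable sign patterns are confined to a fixed parity, so the trajectory analysis is considerably more delicate than in types ${\bf B}_n$ and ${\bf C}_n$, and it is precisely the interaction of this parity with short routes to the tail that produces the exceptional small values ($2n-5$ at $j=3$, and the rank-$5$ case). One also has to check that the general formulas degenerate consistently at $n=4,5$, where $\varpi_3$ or $\varpi_{n-1},\varpi_n$ coincide with low-index weights; and, as usual, the real work is in the lower bounds, not in exhibiting the short Weyl elements.
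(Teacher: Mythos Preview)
Your plan is essentially the same as the paper's proof: reduce via Lemma \ref{Lemma lminush min bj} to the matrix of values $\ell^-_{\varpi_j}(\varpi_k)$, work in the $e_i$-coordinates tracking how many coordinates must be shuttled to the tail, negated in pairs by $r_n$, and returned, invoke the type ${\bf C}_n$ trajectory argument for the middle pairs $2\le j,k\le n-2$, and handle the spinor weights and the ${\bf D}_5$ anomaly separately. The paper carries out exactly the spinor bookkeeping you flag as the main obstacle, building explicit minimal elements $w_{j,n}$, $w_{n,n-1}$, $w_{n,n}$ out of blocks $w_i=r_i\cdots r_{n-1}r_{i-1}\cdots r_{n-2}r_n$ and comparing their lengths case by case to $2n-j-1$; your sketch is correct but would need precisely this computation to become a proof.
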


\begin{proof}
First recall that $\varpi_n$ and $\varpi_{n-1}$ are related by an automorphism of the root system of ${\bf D}_n$ fixing all other fundamental weights. This automorphism is inner for even $n$, so that all weights are self-dual, and outer for odd $n$, so that $\varpi_n^*=\varpi_{n-1}$. In either case, we obtain $\ell^-_{\varpi_j}(\varpi_n)=\ell^-_{\varpi_j}(\varpi_{n-1})$ for all $j\in\{1,...,n-2\}$ and $\ell^{\varpi_n}=\ell^{\varpi_{n-1}}$.

To prove (i) we take $h=\varpi_1=e_1$ and compute the numbers $\ell^-_{\varpi_1}(\varpi_j)=\ell_{\varpi_j}(\varpi_1)$ for $j=1,...,n$. It is convenient to act on $\varpi_1$, and construct elements $w_{j,1}$ of minimal length among those satisfying $\langle \varpi_j,w\varpi_1\rangle<0$. Then $w_{1,j}$ can be taken to be $w_{j,1}^{-1}$. For any $w\in W$ with $l(w)<n-1$, we have $w\varpi_1\in\{e_1,...,e_{n-1}\}$ and hence $\langle \varpi_j,w\varpi_1\rangle>0$ for all $j$. On the other hand, the element $w_{n,1}=r_nr_{n-2}...r_1$ yields $\langle \varpi_{n},w_{n,1}\varpi_1\rangle=\frac12\langle e_1+...+e_{n-1} +e_n,-e_n\rangle=-\frac12<0$. This immediately implies part (i):
$$
\ell^{\varpi_1}=\ell^-_{\varpi_1}(\varpi_{n}) = l(r_1...r_{n-2}r_n) = n-1\;.
$$
For $j\in\{1,...,n-2\}$, in order to obtain a negative value for $\langle \varpi_j,w\varpi_1 \rangle$ we first need to apply $w_{n,1}$, the shortest element sending $\varpi_1$ to $\{-e_1,...,-e_n\}$, and compose it with the shortest element sending $w_{n,1}\varpi_1=-e_n$ to $\{-e_1,...,-e_j\}$, which is $r_j...r_{n-1}$. Put $w_{j,1}=r_j...r_{n-1}r_nr_{n-2}...r_1$. We obtain
\begin{gather}\label{For Dn ell 1j}
\ell^-_{\varpi_1}(\varpi_j)=\ell^-_{\varpi_j}(\varpi_1) = l(w_{j,1}) =2n-j-1 \;.
\end{gather}

Next, we address $h=\varpi_j$ and the number $\ell^{\varpi_j}$ for $j\in\{2,...,n-2\}$. We have computed $\ell^-_{\varpi_j}(\varpi_1)$ above. For $k\in\{2,...,n-2\}$, the inequality $\ell^-_{\varpi_j}(\varpi_1)<\ell^-_{\varpi_j}(\varpi_k)$ holds, by the same argument as in type ${\bf C}_n$, the proof of Proposition \ref{Prop Cn ell fundwei}. 

To compute $\ell^{\varpi_j}$ it remains to consider $\lambda=\varpi_n=\frac12(e_1+...+e_n)$ and compute $\ell^-_{\varpi_j}(\varpi_n)$. For $w\in W$, the negativity of the scalar product $\langle w\varpi_n,\varpi_j\rangle$ is equivalent to the set $\{-e_1,...,-e_j\}\cap w\{e_1,...,e_n\}$ having cardinality at least $\lfloor\frac{j+2}{2}\rfloor$. Negative signs are produced in pairs by $r_n$ and in order for more negative signs to be produced, both $e_{n-1},e_n$ have to be removed from the set $\{e_{n-1},e_n\}$ before $r_n$ is applied again. The building blocks for the following construction are the elements $w_i=r_i...r_{n-1}r_{i-1}...r_{n-2}r_n$, for $i\in\{2,...,n-2\}$. The element $w_i$ is an element of minimal length sending the pair $\{e_{n-1},e_n\}$ to the pair $\{-e_i,-e_{i-1}\}$. The length of this element is $2(n-i)+1$. Our task of obtaining a negative value for $\langle w\varpi_n,\varpi_j\rangle$ is achieved in a minimal way by the element
$$
w_{j,n} = \begin{cases} w_j w_{j-2}...w_{\lfloor\frac{j+1}{2}\rfloor+1} & \;,\;\; \textrm{if $\lfloor\frac{j+2}{2}\rfloor$ is even};\\ 
(r_{j}...r_{n-2}r_n)w_{j-1}w_{j-3}...w_{\lfloor\frac{j+1}{2}\rfloor+1} & \;,\;\; \textrm{if $\lfloor\frac{j+2}{2}\rfloor$ is odd}.
\end{cases}
$$
We consider the two cases separately.

Case 1: $\lfloor\frac{j+2}{2}\rfloor=2m$, $m\in\NN$. We obtain
\begin{align*}
\ell^-_{\varpi_j}(\varpi_n)=l(w_{j,n}) & = (2(n-j)+1)+(2(n-j+2)+1)+...+(2(n-j+2(m-1))+1) \\
 & = 2(n-j)m + 2(m-1)m + m \\
 & = 2(n-j-1)m + 2m^2 \;.
\end{align*}
We compare this number to $\ell^-_{\varpi_j}(\varpi_1)=2n-j-1$. We obtain
\begin{align*}
\ell^-_{\varpi_j}(\varpi_n)-\ell^-_{\varpi_j}(\varpi_1) & = 2(n-j-1)m + 2m^2 - (2n-j-1) \\
 & = 2-j + 2(n-j-1)(m-1) + 2m^2 \;. 
\end{align*}
An elementary calculation shows that
\begin{align*}
\ell^-_{\varpi_j}(\varpi_n)=\ell^-_{\varpi_j}(\varpi_1) & \tst\; j=2 \;,\\
\ell^-_{\varpi_j}(\varpi_n)<\ell^-_{\varpi_j}(\varpi_1) & \tst\; j=3 \;,\\
\ell^-_{\varpi_j}(\varpi_n)>\ell^-_{\varpi_j}(\varpi_1) & \tst\; j\geq4 \;.\\
\end{align*}

Case 1: $\lfloor\frac{j+2}{2}\rfloor=2m+1$, $m\in\NN$. We obtain
\begin{align*}
\ell^-_{\varpi_j}(\varpi_n)=l(w_{j,n}) & = n-j+(2(n-j)+1)+(2(n-j+2)+1)+...+(2(n-j+2m)+1) \\
 & = (n-j)(2m+1) + 2m(m+1) + m \\
 & = 2m^2+2(n-j+3)+n-j \;.
\end{align*}
We compare this number to $\ell^-_{\varpi_j}(\varpi_1)=2n-j-1$ and obtain
\begin{align*}
\ell^-_{\varpi_j}(\varpi_n)-\ell^-_{\varpi_j}(\varpi_1) & = 2m^2+2(n-j+3)+n-j - (2n-j-1) \\
 & = 2m^2 + m(n-j+3)-(n-1) \;.
\end{align*}
An elementary calculation shows that $\ell^-_{\varpi_j}(\varpi_n)>\ell^-_{\varpi_j}(\varpi_1)$ for all $j\geq 4$.

We conclude that the value of $\ell^{\varpi_j}$ is given by
\begin{align*}
& \ell^{\varpi_2}=\ell^-_{\varpi_2}(\varpi_1)=\ell^-_{\varpi_2}(\varpi_n)=\ell^-_{\varpi_2}(\varpi_{n-1}) = 2n-3 \;, \\
& \ell^{\varpi_3}=\ell^-_{\varpi_2}(\varpi_n)=\ell^-_{\varpi_2}(\varpi_{n-1}) = 2n-5 \;, \\
& \ell^{\varpi_j}=\ell^-_{\varpi_j}(\varpi_1) = 2n-j-1 \;,\;\;\forall j\in\{4,...,n-2\} \;,
\end{align*}
with respective Weyl group elements $w_{j,1}$ and $w_{j,n}$ defined along the above calculation. This completes the proofs of (ii),(iii) and (iv).

It remains to consider $h=\varpi_n$ and $\lambda\in\{\varpi_{n-1},\varpi_n\}$. We begin with $\lambda=\varpi_{n-1}$. The value of $\ell^-_{\varpi_n}(\varpi_{n-1})$ can be computed by the same method as applied above for $\ell^-_{\varpi_j}(\varpi_n)$. In order for an element $w\in W$ to yield a negative scalar product $\langle w\varpi_{n-1},\varpi_n\rangle$, it is necessary and sufficient that the set $\{-e_{1},...,-e_{n}\}\cap w\{e_1,...,e_{n-1},-e_n\}$ has cardinality at least $\lfloor\frac{n+2}{2}\rfloor$. The task is achieved in a minimal way by the element
$$
w_{n,n-1} = \begin{cases} r_n w_{n-2} w_{n-4}...w_{\lceil\frac{n}{2}\rceil+2}(r_{\lceil\frac{n}{2}\rceil}...r_{n-1}) & \;,\;\; \textrm{if $\lfloor\frac{n}{2}\rfloor$ is even};\\ 
r_n w_{n-2} w_{n-4}...w_{\lceil\frac{n}{2}\rceil+1}(r_{\lceil\frac{n}{2}-1\rceil}...r_{n-1}) & \;,\;\; \textrm{if $\lfloor\frac{n}{2}\rfloor$ is odd}.
\end{cases}
$$
The length of each of the above elements is equal to the sum of the lengths of the factors in the given product. The respective lengths can be computed in the same way as in the cases above. To compute the value of $\ell^{\varpi_n}$ a comparison to $\ell^-_{\varpi_n}(\varpi_1)=n-1$ suffices. We have
\begin{align*}
l(w_{n,n-1})\geq l(r_n)+l(w_{\lceil\frac{n}{2}\rceil+2})+l(r_{\lceil\frac{n}{2}\rceil}...r_{n-1}) & = 1+(2(n-\lceil\frac{n}{2}\rceil-2)+1)+(n-\lceil\frac{n}{2}\rceil) \\
& = 2(n-1)+\lfloor\frac{n}{2}\rfloor-2\lceil\frac{n}{2}\rceil \;.
\end{align*}
It follows that
$$
\ell^-_{\varpi_n}(\varpi_{n-1})-\ell^-_{\varpi_n}(\varpi_1)= l(w_{n,n-1})-l(w_{n,1})\geq n-1+\lfloor\frac{n}{2}\rfloor-2\lceil\frac{n}{2}\rceil \; >\; 0 \quad{\rm for}\quad n\geq 6 \;.
$$
For $n=5$, we obtain $w_{5,4}=r_5r_3r_4$, whence $\ell^-_{\varpi_5}(\varpi_{4})=3$. On the other hand, we have $\ell^-_{\varpi_5}(\varpi_{1})=l(r_5r_3r_2r_1)=4$ and $\ell^-_{\varpi_5}(\varpi_{5})=l(r_5r_3r_4r_2r_3r_5)=6$. Thus $\ell^{\varpi_5}=\ell^-_{\varpi_5}(\varpi_{4})=3$.

Next, we turn our attention to $\ell^-_{\varpi_n}(\varpi_n)$ which was computed so far only for $n=5$. Considerations analogous to the above bring us to the element
$$
w_{n,n} = \begin{cases} r_n w_{n-2} w_{n-4}...w_{\lceil\frac{n+2}{2}\rceil} & \;,\;\; \textrm{if $\lfloor\frac{n+2}{2}\rfloor$ is even};\\ 
r_n w_{n-2} w_{n-4}...w_{\lceil\frac{n}{2}\rceil} & \;,\;\; \textrm{if $\lfloor\frac{n+2}{2}\rfloor$ is odd}.
\end{cases}
$$
For $4\leq n\leq 7$, we have $w_{n,n}=r_nw_{n-2}$ and 
$$
\ell^-_{\varpi_n}(\varpi_n)=l(w_{n,n})=1+2(n-(n-2))+1=6\geq n-1=\ell^-_{\varpi_n}(\varpi_1) \;,
$$
with equality holding if and only if $n=7$. For $n\geq8$, our expression for $w_{n,n}$ contains a product of the form $w_{i+2}w_i$. To compare $\ell^-_{\varpi_n}(\varpi_n)$ to $\ell^-_{\varpi_n}(\varpi_1)=n-1$ it suffices to consider the length of $w_{i+2}w_i$. We have
$$
l(w_{i+2}w_i) = 2(n-i-2)+1+2(n-i)+1 = 4(n-i) - 2 \;.
$$
Hence
$$
l(w_{i+2}w_i) - l(w_{n,1}) = 4(n-i)-2-(n-1)=3n-4i-1 \;,
$$
which is positive if and only if $i<\frac34 n - \frac14$. Such $i$ occur in our expression and hence, for $n\geq 8$, $l(w_{n,n})>l(w_{i+2}w_i)>l(w_{n,1})$. 

We deduce that
$$
\ell^-_{\varpi_n}(\varpi_n)\geq n-1 = \ell^-_{\varpi_n}(\varpi_1)\;,
$$
with equality holding for a single special case at $n=7$.

Summarizing our results for $\ell^-_{\varpi_n}(\varpi_j)$ with $j=1,...,n$, we conclude that
$$
\ell^{\varpi_n}=\begin{cases} n-1 = \ell^-_{\varpi_n}(\varpi_1) = l(r_nr_{n-2}r_{n-3}...r_1) &,\; {\rm for}\; n\ne 5 \quad (=\ell^-_{\varpi_n}(\varpi_n) \;\;{\rm for}\; n=7)\;, \\
3=\ell^-_{\varpi_5}(\varpi_4)=l(r_5r_3r_4) &,\; {\rm for}\; n=5\;. \end{cases}
$$
This completes the proof.
\end{proof}

\begin{prop}\label{Prop Dn ell hPrince}
For $h=\rho=\varpi_1+...+\varpi_n=(n-1)e_1+(n-2)e_2+...+e_{n-1}$ the value of $\ell^\rho$ is $n$ and is attained at $\lambda=\varpi_1=e_1$ for two Weyl group elements $r_n...r_1$ and $r_{n-1}r_nr_{n-2}...r_1$.
\end{prop}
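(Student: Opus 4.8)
Proof proposal for Proposition~\ref{Prop Dn ell hPrince}.

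The plan is to use the reduction $\ell^\rho=\min\{\ell_h^-(\varpi_j):j=1,\dots,n\}$ from Lemma~\ref{Lemma lminush min bj}(iv) and to compute $\ell_\rho^-(\varpi_j)$ one $W$-orbit at a time. In the coordinates of \S\ref{Sect Dn} one has $\rho=\sum_{i=1}^{n-1}(n-i)e_i$, with vanishing $e_n$-coordinate, so $\langle v,\rho\rangle$ is a weighted coordinate sum that annihilates $e_n$. Throughout I would use the standard fact that for a minuscule dominant weight $\varpi$ and $\mu$ in its Weyl orbit, $\min\{l(w):w\varpi=\mu\}$ equals the height of $\varpi-\mu$ (the sum of its coefficients in the simple-root basis, extending ${\rm hi}$ by linearity to the root lattice); the lower bound $l(w)\ge{\rm hi}(\varpi-\mu)$ is immediate since $\langle w\varpi,\alpha_i^\vee\rangle\in\{0,\pm1\}$ for minuscule $\varpi$. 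The fundamental weights $\varpi_1$ (vector) and $\varpi_{n-1},\varpi_n$ (half-spin) are minuscule.

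First I would treat $\lambda=\varpi_1=e_1$, whose orbit is $\{\pm e_1,\dots,\pm e_n\}$ with $\langle\pm e_i,\rho\rangle=\pm(n-i)$, so the orbit elements with negative $\rho$-pairing are exactly $-e_1,\dots,-e_{n-1}$. From $e_1+e_i=(\alpha_1+\dots+\alpha_{i-1})+(2\alpha_i+\dots+2\alpha_{n-2}+\alpha_{n-1}+\alpha_n)$ one gets ${\rm hi}(e_1+e_i)=2n-1-i$, which over $1\le i\le n-1$ is minimized at $i=n-1$ with value $n$. Hence $\ell_\rho^-(\varpi_1)=n$, attained only by the unique length-$n$ element carrying $e_1$ to $-e_{n-1}$ (uniqueness being that of the minimal-length representative of a coset of $\mathrm{Stab}(e_1)$); tracing the two straight-descent trajectories $e_1\to e_2\to\dots\to e_{n-1}\to e_n\to -e_{n-1}$ and $e_1\to e_2\to\dots\to e_{n-1}\to -e_n\to -e_{n-1}$ identifies this element as $r_n r_{n-1}r_{n-2}\cdots r_1=r_{n-1}r_n r_{n-2}\cdots r_1$, the two expressions agreeing because the non-adjacent nodes $n-1,n$ give commuting reflections.

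Next I would show $\ell_\rho^-(\varpi_j)\ge n$ for every $j\ge2$, so that the minimum is realized at $\varpi_1$. For the half-spin weights, $\ell_\rho^-(\varpi_{n-1})=\ell_\rho^-(\varpi_n)$ by the diagram automorphism exchanging them, which is an isometry fixing $\rho$; so it suffices to bound $\ell_\rho^-(\varpi_n)$. An orbit element has the shape $\mu=\tfrac12\sum_i\varepsilon_ie_i$ with $S:=\{i:\varepsilon_i=-1\}$ of even cardinality, and then $\varpi_n-\mu=\sum_{i\in S}e_i$, whose height is $\sum_{i\in S}(n-i)$ (pair up $S$ and use ${\rm hi}(e_a+e_b)=2n-a-b$), while $\langle\mu,\rho\rangle<0$ is equivalent to $\sum_{i\in S}(n-i)>\tfrac14 n(n-1)$. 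Since $\tfrac14 n(n-1)\ge n-1$ for $n\ge4$ and $\sum_{i\in S}(n-i)$ is an integer, every admissible $S$ gives $\sum_{i\in S}(n-i)\ge n$, hence $l(w)\ge n$ and $\ell_\rho^-(\varpi_n)\ge n$. For $2\le j\le n-2$ one instead uses $\rho=\sum_k\varpi_k$ together with Lemma~\ref{Lemma lminush min bj}(ii),(iii) to get $\ell_\rho^-(\varpi_j)=\ell_{\varpi_j}^-(\rho)\ge\min_k\ell_{\varpi_j}^-(\varpi_k)=\ell^{\varpi_j}$, and the values from Proposition~\ref{Prop Dn ell fundwei} give $\ell^{\varpi_2}=2n-3\ge n$, $\ell^{\varpi_3}=2n-5\ge n$ for $n\ge5$, and $\ell^{\varpi_j}=2n-j-1\ge n+1$ for $4\le j\le n-2$ (the case $n=4$ having only $j=2$ in this range, with $\varpi_3,\varpi_4$ absorbed by the minuscule argument). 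Combining, $\min_j\ell_\rho^-(\varpi_j)=\ell_\rho^-(\varpi_1)=n$, which is the assertion, and the extremal Weyl group element is the one exhibited above.

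The one genuinely delicate point is the lower bound at the minuscule fundamental weights: the cheap estimate $\ell_\rho^-(\varpi_j)\ge\ell^{\varpi_j}$ gives only $n-1$ for $\varpi_1,\varpi_{n-1},\varpi_n$, so one must locate the \emph{shallowest} orbit element with strictly negative $\rho$-pairing rather than just any negative one. The work is carried entirely by the two elementary observations ${\rm hi}\big(\sum_{i\in S}e_i\big)=\sum_{i\in S}(n-i)$ and ``$\sum_{i\in S}(n-i)>\tfrac14 n(n-1)$ forces $\sum_{i\in S}(n-i)\ge n$ for $n\ge4$''; everything else — the reduction to fundamental weights, the use of the tabulated $\ell^{\varpi_j}$, and the extraction of the two reduced words — is routine.
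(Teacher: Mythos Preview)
Your argument is correct, and in fact cleaner than the paper's. The paper handles $\varpi_1$ essentially as you do, but for $j\ge2$ it asserts that ``the expression for $h$ implies $\ell^-_h(\varpi_j)\ge\ell^-_{\varpi_n}(\varpi_j)$'' and then invokes parts (v),(vi) of Proposition~\ref{Prop Dn ell fundwei}; this forces a separate computation for $n=5$, where $\ell^-_{\varpi_n}(\varpi_{n-1})=3<n$. Your route splits differently: for $2\le j\le n-2$ you use the transparent chain $\ell_\rho^-(\varpi_j)=\ell_{\varpi_j}^-(\rho)\ge\ell^{\varpi_j}$ and read off the values from parts (ii)--(iv) of Proposition~\ref{Prop Dn ell fundwei}, while for the minuscule $\varpi_{n-1},\varpi_n$ you compute directly via the height bound, exploiting the coincidence that ${\rm hi}(\varpi_n-\mu)=\sum_{i\in S}(n-i)$ is exactly the quantity governing the sign of $\langle\mu,\rho\rangle$. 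This eliminates the $n=5$ exception altogether and replaces the paper's somewhat opaque comparison with $\ell^-_{\varpi_n}$ by two elementary estimates. Your observation that the ``two Weyl group elements'' in the statement are in fact one element with two reduced expressions (since $r_{n-1}r_n=r_nr_{n-1}$ in $D_n$) is also worth recording.
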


\begin{proof}
We employ methods analogous to the ones in Proposition \ref{Prop Dn ell fundwei} and use the notation from its proof.

If $w$ is either $r_n...r_1$ or $r_{n-1}r_nr_{n-2}...r_1$, then $w\varpi_1=-e_{n-1}$ and hence $\langle w\varpi_1,h\rangle<0$. The length of $w$ is minimal among the elements yielding a negative scalar product, because the reasoning applied in the proof of Proposition \ref{Prop Dn ell fundwei} to the case $\ell^-_{\varpi_1}(\varpi_j)$ with $j\in\{2,...,n-2\}$ extends without alteration for $j=n-1$ and this covers our $h$. Thus $\ell^-_h(\varpi_1)=n$.

To compare $\ell^-_h(\varpi_1)$ to the numbers $\ell^-_h(\varpi_j)$ with $j\geq 2$, we observe that, the expression for $h$ implies $\ell^-_h(\varpi_j)\geq\ell^-_{\varpi_n}(\varpi_j)$ for all $j$. Thus we may apply the results of Proposition \ref{Prop Dn ell fundwei},(v),(vi), which, along with $\ell^-_h(\varpi_1)=n$, imply that the value of $\ell^h$ is attained at $\varpi_1$, with a possible exception in the case $n=5$. A simple computation for $n=5$ yields
$$
\ell^-_h(\varpi_5)=l(r_3r_4r_1r_2r_3r_5)=l(r_5r_3r_4r_2r_3r_5)=6 \;.
$$
Thus $\ell^-_h(\varpi_1)<\ell^-_h(\varpi_5)$ in this case as well.
\end{proof}

\begin{prop}\label{Prop Dn ell and LSD}
The value of  $\ell_{{\bf D}_n}^{\rm sd}$ is
$$
\ell_{{\bf D}_n}^{\rm sd}=\ell{^\varpi_1}=\ell^-_{\varpi_1}(\varpi_n)=l(r_1...r_{n-2}r_n)=n-1 \;.
$$
For $n\ne 5$, the values of $\ell_{{\bf D}_n}$ and $\ell_{{\bf D}_n}^{\rm sd}$ coincide.

For $n=5$, the value of $\ell_{{\bf D}_5}$ is $\ell_{{\bf D}_5}=\ell^{\varpi_5}=\ell^-_{\varpi_5}(\varpi_4)=3$.
\end{prop}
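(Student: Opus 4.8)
The plan is to prove $\ell_{{\bf D}_n}^{\rm sd}=n-1$ by matching upper and lower bounds, and then to read off the remaining two assertions directly from Proposition~\ref{Prop Dn ell fundwei}. For the upper bound, observe that $\varpi_1=e_1$ is self-dual (it is fixed by the diagram automorphism of ${\bf D}_n$, hence $\varpi_1^*=\varpi_1$); since $\ell^{\varpi_1}=n-1$ by Proposition~\ref{Prop Dn ell fundwei}(i) and $\varpi_1\in\Lambda^+_{\rm sd}\setminus\{0\}$, the formula for $\ell^{\rm sd}_\Delta$ in Corollary~\ref{Coro ellDeltaDual} gives $\ell_{{\bf D}_n}^{\rm sd}\le n-1$.

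For the lower bound I would use that ${\rm Cone}(\Lambda^+_{\rm sd})$ is generated by $\varpi_1,\dots,\varpi_{n-2}$ and $\varpi_{n-1}+\varpi_n$, which is clear from the coordinate description of $\Lambda^+_{\rm sd}$. Writing an arbitrary $h\in\Lambda^+_{\rm sd}\setminus\{0\}$ as a nonnegative combination $h=\sum_i c_i g_i$ of these generators, and combining the symmetry $\ell_h^-(\lambda)=\ell_\lambda^-(h)$ with the inequality $\ell_\mu^-(\lambda_1+\lambda_2)\ge\min\{\ell_\mu^-(\lambda_1),\ell_\mu^-(\lambda_2)\}$ of Lemma~\ref{Lemma lminush min bj}(iii) and the scale-invariance of $\ell_\mu^-$ in Remark~\ref{Rem h in Lambda}, one obtains $\ell_h^-(\varpi_j)\ge\min_i\ell^{g_i}$ for every $j$, hence
$$
\ell_{{\bf D}_n}^{\rm sd}=\min\{\ell^{\varpi_1},\dots,\ell^{\varpi_{n-2}},\ell^{\varpi_{n-1}+\varpi_n}\}.
$$
By Proposition~\ref{Prop Dn ell fundwei}(i)--(iv) one has $\ell^{\varpi_1}=n-1$ and $\ell^{\varpi_k}\ge n-1$ for $2\le k\le n-2$ (the values $2n-3$, $2n-5$, $2n-k-1$ are all $\ge n-1$ when $n\ge4$), so everything reduces to $\ell^{\varpi_{n-1}+\varpi_n}\ge n-1$. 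When $n\ne5$, applying the same superadditivity once more yields $\ell_{\varpi_{n-1}+\varpi_n}^-(\varpi_j)\ge\min\{\ell_{\varpi_j}^-(\varpi_{n-1}),\ell_{\varpi_j}^-(\varpi_n)\}\ge\min\{\ell^{\varpi_{n-1}},\ell^{\varpi_n}\}=n-1$ by Proposition~\ref{Prop Dn ell fundwei}(vi), which finishes the lower bound in this case.

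The one real obstacle is $n=5$: there Proposition~\ref{Prop Dn ell fundwei}(v) gives only $\ell^{\varpi_4}=\ell^{\varpi_5}=3<n-1$, so the split of $\varpi_4+\varpi_5$ is too weak and I would argue directly. In the standard coordinates $\varpi_4+\varpi_5=e_1+e_2+e_3+e_4=:g$, and its Weyl orbit is the set of signed permutations of $(1,1,1,1,0)$ with an even number of sign changes. Since the three ``transposition'' simple reflections $r_1,r_2,r_3$ fix $g$, the ball of radius $3$ around $g$ in the Cayley graph contains only a handful of orbit points, and one checks by inspection that each of them has nonnegative pairing with every fundamental weight, i.e.\ lies in ${\rm Cone}(\Delta^+)$; hence $\ell^{\varpi_4+\varpi_5}\ge4=n-1$ and $\ell_{{\bf D}_5}^{\rm sd}=4$. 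Finally the remaining assertions are immediate from Proposition~\ref{Prop Dn ell fundwei}: for $n\ne5$ that proposition gives $\ell_{{\bf D}_n}=\min_j\ell^{\varpi_j}=n-1=\ell_{{\bf D}_n}^{\rm sd}$, while for $n=5$ it gives $\ell_{{\bf D}_5}=\ell^{\varpi_5}=\ell^-_{\varpi_5}(\varpi_4)=3$.
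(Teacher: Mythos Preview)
Your proof is correct, but it takes a somewhat more laborious route than the paper's in the generic case $n\ne5$. The paper simply sandwiches: since $\Lambda^+_{\rm sd}\subset\Lambda^+$ one has $\ell_\Delta\le\ell_\Delta^{\rm sd}$ trivially, and since $\varpi_1\in\Lambda^+_{\rm sd}$ one has $\ell_\Delta^{\rm sd}\le\ell^{\varpi_1}$; for $n\ne5$ Proposition~\ref{Prop Dn ell fundwei} gives $\ell_\Delta=\ell^{\varpi_1}=n-1$, forcing equality throughout. This avoids your reduction to generators of $\Lambda^+_{\rm sd}$ and the second application of superadditivity to bound $\ell^{\varpi_{n-1}+\varpi_n}$. (Incidentally, your list of generators for $\Lambda^+_{\rm sd}$ is only correct for $n$ odd; for $n$ even one has $w_0=-1$, so $\Lambda^+_{\rm sd}=\Lambda^+$ and $\varpi_{n-1},\varpi_n$ are separately self-dual. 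This does not affect your conclusion, since in that case $\ell_\Delta^{\rm sd}=\ell_\Delta$ tautologically, but you may want to separate the parities.)

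For $n=5$ both arguments must treat $h=\varpi_4+\varpi_5=e_1+e_2+e_3+e_4$ by hand. The paper observes that the computation for $\rho$ in Proposition~\ref{Prop Dn ell hPrince} carries over verbatim to any $h$ supported on $e_1,\dots,e_{n-1}$ with strictly decreasing nonnegative coefficients, yielding the exact value $\ell^{\varpi_{n-1}+\varpi_n}=\ell^-_{\varpi_{n-1}+\varpi_n}(\varpi_1)=n$; in particular $\ell^{\varpi_4+\varpi_5}=5$. Your direct inspection of the length-$\le3$ orbit of $g$ is valid and gives the weaker bound $\ell^{\varpi_4+\varpi_5}\ge4$, which is all that is needed; the paper's approach has the advantage of giving the exact value with no case-by-case checking.
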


\begin{proof}
As shown in Proposition \ref{Prop Dn ell fundwei}, for $n=5$, the minimum among $\ell^{\varpi_j}$ for $j=1,...,n$ is attained for $\varpi_1$ and $\ell{^\varpi_1}=\ell^-_{\varpi_1}(\varpi_n)=l(r_1...r_{n-2}r_n)=n-1$. Since $\varpi_1\in\Lambda^+_{\rm sd}$, we obtain $\ell_{{\bf D}_n}^{\rm sd}=\ell{^\varpi_1}=n-1$, due to Corollary \ref{Coro ellDeltaDual}.

For $n=5$, the values of $\ell^{\varpi_j}$, $j=1,...,5$, are $4,7,5,3,3$, respectively. Thus $\ell_{{\bf D}_5}=\ell^{\varpi_4}=\ell^{\varpi_5}=3$. Since $\varpi_4^*=\varpi_5\notin \Lambda^+_{\rm sd}$ this does not yield the value of $\ell_{{\bf D}_5}$. The monoid $\Lambda^+_{\rm sd}$ is generated by $\varpi_1,\varpi_2,\varpi_3,\varpi_4+\varpi_5$. The values of the first three generators are known. It remains to consider $h=\varpi_4+\varpi_5=e_1+e_2+e_3+e_4$. The same computation as applied for the case of $h=\rho$ in Proposition \ref{Prop Dn ell hPrince} yields, for general $n\geq 4$
$$
\ell^{\varpi_{n-1}+\varpi_n}=\ell^-_{\varpi_{n-1}+\varpi_n}(\varpi_1)=l(r_n...r_1)=n \;.
$$
Thus, back to $n=5$, we obtain $\ell^{\varpi_4+\varpi_5}=5$. We can conclude that $\ell_{{\bf D}_5}^{\rm sd}=\ell^{\varpi_1}=4$. This completes the proof.
\end{proof}

\subsection{Type ${\bf G}_2$}\label{Sect G2}

Let $\Delta$ be a root system of type ${\bf G}_2$. Then $w_0=-1$, so that all dominant weights are self-dual and $\ell_\Delta=\ell_\Delta^{\rm sd}$. The simple roots $\alpha_1,\alpha_2$ are numbered so that $\alpha_1$ is short, as in \cite{Bourbaki-Lie-2}.

\begin{prop}\label{Prop G2 ell}
The value of the invariant $\ell_\Delta$ for a root system of type ${\bf G}_2$ is $\ell_{{\bf G}_2}=3$. It is attained as $\ell^h$ for every dominant weight $h$. More precisely, we have
\begin{gather*}
\begin{array}{l}
\ell^{\varpi_1} = \ell^-_{\varpi_1}(\varpi_1)= \ell^-_{\varpi_1}(\lambda) = l(r_1r_2r_1) = 3 \;,\\
\ell^{\varpi_2} = \ell^-_{\varpi_2}(\varpi_2)= \ell^-_{\varpi_1}(\lambda) = l(r_2r_1r_2) = 3 \;,
\end{array}
\end{gather*}
for any $\lambda\in\Lambda^{++}$.
\end{prop}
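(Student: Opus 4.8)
The plan is to reduce everything to a finite computation on the $12$-element Weyl group. By Corollary~\ref{Coro ellDeltaDual}(i) and Lemma~\ref{Lemma lminush min bj}(iv) we have $\ell_{{\bf G}_2}=\min\{\ell^{\varpi_1},\ell^{\varpi_2}\}$ and $\ell^{\varpi_j}=\min\{\ell^-_{\varpi_j}(\varpi_1),\ell^-_{\varpi_j}(\varpi_2)\}$, so everything rests on the four numbers $\ell^-_{\varpi_j}(\varpi_k)$, $j,k\in\{1,2\}$; since $w_0=-1$ we will automatically get $\ell^{\rm sd}_{{\bf G}_2}=\ell_{{\bf G}_2}$. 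First I would fix the standard realization of ${\bf G}_2$ with $\alpha_1$ short, so that $\langle\alpha_1,\alpha_1\rangle=2$, $\langle\alpha_2,\alpha_2\rangle=6$, $\langle\alpha_1,\alpha_2\rangle=-3$, and record the numerical accident that $\varpi_1=2\alpha_1+\alpha_2$ is the highest short root and $\varpi_2=3\alpha_1+2\alpha_2$ is the highest (long) root. Example~\ref{Exa shortroot} already yields $\ell^{\varpi_1}={\rm hi}(\varpi_1)=3$ and $\ell^{\varpi_2}=\check{\rm hi}(\varpi_2^\vee)=\check{\rm hi}(\alpha_1^\vee+2\alpha_2^\vee)=3$, hence $\ell_{{\bf G}_2}=3$; but the finer data in the statement still requires the explicit orbit computation.

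The core step is to list the orbit $W\varpi_1$ (the six short roots) as a path graph under the simple reflections: $r_2$ fixes $\varpi_1$, while iterating $r_1,r_2,r_1,r_2,r_1$ gives $\varpi_1\to\alpha_1+\alpha_2\to\alpha_1\to-\alpha_1\to-(\alpha_1+\alpha_2)\to-\varpi_1$, the $i$-th vertex being at distance $i$ and realized by the evident reduced word. Pairing this list against $\varpi_1$ (using $\langle\alpha_1,\varpi_1\rangle=1$, $\langle\alpha_2,\varpi_1\rangle=0$) shows every vertex at distance $\le 2$ has nonnegative inner product with $\varpi_1$, while $r_1r_2r_1\varpi_1=-\alpha_1$ pairs to $-1$; hence $\ell^-_{\varpi_1}(\varpi_1)=3$, attained only by $r_1r_2r_1$ among length-$3$ elements since $r_2r_1r_2\varpi_1=\alpha_1>0$. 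Pairing the same orbit against $\varpi_2$ (using $\langle\alpha_1,\varpi_2\rangle=0$, $\langle\alpha_2,\varpi_2\rangle=3$) gives $\ell^-_{\varpi_1}(\varpi_2)=4$. Applying the diagram symmetry $r_1\leftrightarrow r_2$, $\varpi_1\leftrightarrow\varpi_2$ (short $\leftrightarrow$ long) to the whole computation yields $\ell^-_{\varpi_2}(\varpi_2)=3$, realized only by $r_2r_1r_2$, and $\ell^-_{\varpi_2}(\varpi_1)=4$. Therefore $\ell_{{\bf G}_2}=\ell^{\varpi_1}=\ell^{\varpi_2}=3=\ell^{\rm sd}_{{\bf G}_2}$.

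Finally I would handle a general nonzero dominant $h=a\varpi_1+b\varpi_2$ and a strictly dominant $\lambda$. For the lower bound $\ell^h\ge 3$: every $w$ of length $\le 2$ sends each $\varpi_j$ into $\{\varpi_1,\varpi_2,\alpha_1,\alpha_2,\alpha_1+\alpha_2,3\alpha_1+\alpha_2\}\subset{\rm Cone}(\Delta^+)$, so $\langle w\varpi_j,h\rangle\ge 0$. For the upper bound $\ell^h\le 3$: since $h\ne 0$ we may assume $a>0$, whence $\langle r_1r_2r_1\varpi_1,h\rangle=\langle-\alpha_1,h\rangle=-a<0$ and $\ell^-_h(\varpi_1)\le 3$ (if instead only $b>0$, use $\langle r_2r_1r_2\varpi_2,h\rangle=\langle-\alpha_2,h\rangle=-3b<0$); so $\ell^h=3$ for every nonzero dominant $h$. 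For strictly dominant $\lambda$, the orbit list above shows $\langle w\varpi_1,\lambda\rangle>0$ for all $l(w)\le 2$, while among length-$3$ elements only $r_1r_2r_1$ gives $\langle r_1r_2r_1\varpi_1,\lambda\rangle=\langle-\alpha_1,\lambda\rangle<0$ (since $\langle r_2r_1r_2\varpi_1,\lambda\rangle=\langle\alpha_1,\lambda\rangle>0$), so $\ell^-_{\varpi_1}(\lambda)=3$ uniquely via $r_1r_2r_1$; dually $\ell^-_{\varpi_2}(\lambda)=3$ via $r_2r_1r_2$. There is no genuine obstacle: the only care needed is the bookkeeping of minimal-length Weyl elements along the orbit (a $6$-vertex path, with each $r_i$ fixing a pair of opposite vertices) and keeping the short/long convention consistent throughout.
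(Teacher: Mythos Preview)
Your argument is correct and follows essentially the same route as the paper: both invoke Example~\ref{Exa shortroot} on the fact that $\varpi_1$ and $\varpi_2$ are the highest short and long roots to get $\ell^{\varpi_1}=\ell^{\varpi_2}=3$, and both use $r_1r_2r_1\varpi_1=-\alpha_1\in\Delta^-$ to handle arbitrary (strictly) dominant $h$. You supply more detail than the paper does, carrying out the full orbit computation to pin down the specific Weyl elements and to verify $\ell^-_{\varpi_j}(\lambda)=3$ for strictly dominant $\lambda$; the paper leaves these implicit. One small caveat: the ``diagram symmetry $r_1\leftrightarrow r_2$'' you invoke is not an automorphism of the ${\bf G}_2$ diagram, but rather the duality $\Delta\leftrightarrow\Delta^\vee$ (or simply the parallel computation on the long-root orbit); the conclusion is unaffected, but you should phrase it as such.
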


\begin{proof}
Recall, \cite{Bourbaki-Lie-2}, that both fundamental weights of ${\bf G}_2$ are roots, $\varpi_1=2\alpha_1+\alpha_2$ is short and $\varpi_2=3\alpha_1+2\alpha_2$ is the highest root. Applying Example \ref{Exa shortroot} to this situation, we obtain
$$
\ell^{\varpi_1}={\rm hi}(\varpi_1)=3 \quad, \quad \ell^{\varpi_2}=\check{\rm hi}(\varpi_2^\vee)=3 \;.
$$
This proves $\ell_{{\bf G}_2}=3$. To see that $\ell^h=3$ for every strictly dominant $h\in\Lambda^+$, note that $r_1r_2r_1(\varpi_1)$ is a negative root, so $\langle r_1r_2r_1(\varpi_1),h\rangle<0$.
\end{proof}

\begin{rem}
In this case we can describe all possible values of $\ell^-_h(\lambda)$ for $h,\lambda\in\Lambda^+\setminus\{0\}$, which turn out to be, 3 and 4. Indeed, taking $h=\rho$ we compute:
$$
\ell^-_\rho(\lambda)= \begin{cases} 3=l(r_1r_2r_1)  &\;,\; {\rm if}\;\; \lambda_1>\lambda_2 \;,\\  
                                  4=l(r_1r_2r_1r_2)=l(r_2r_1r_2r_1) &\;,\; {\rm if}\;\; \lambda_1=\lambda_2 \;, \\
                                  3 = l(r_2r_1r_2) &\;,\; {\rm if}\;\; \lambda_1=\lambda_2 \;.\\
                    \end{cases}
$$
Now put $h=h_1\varpi_1+h_2\varpi_2$ and $\lambda=\lambda_1\varpi_1+\lambda_2\varpi_2$. The case $h_1=h_2$ is covered by the result for $\rho$. Assume $h_1> h_2$. Then
$$
\ell^-_h(\lambda) = \begin{cases} 3=l(r_1r_2r_1)  \;\;,\; {\rm if}\;\; \lambda_1\geq\lambda_2\;,\\  
                                  4=l(r_1r_2r_1r_2)\;\;,\; {\rm if}\;\; \lambda_1<\lambda_2 \;.
                    \end{cases}
$$
The case $h_1<h_2$ is analogous and the result is obtained by switching the indices 1,2 in the above formula.
\end{rem}

\subsection{Type ${\bf F}_4$}\label{Sect F4}

Let $\Delta$ be a root system of type ${\bf F}_4$. Then we have $w_0=-1$, so all weights are self-dual, $\Lambda^+=\Lambda^+_{\rm sd}$ and $\ell_\Delta=\ell_\Delta^{\rm sd}$. We adhere to the notation of \cite{Bourbaki-Lie-2}, where the simple roots $\alpha_1,\alpha_2,\alpha_3,\alpha_4$ are numbered so that $\alpha_1,\alpha_2$ are short and $\langle\alpha_2,\alpha_3\rangle\ne 0$.

\begin{prop}\label{Prop F4 ell}
For a root system $\Delta$ of type ${\bf F}_4$ the value of $\ell_\Delta$ is $8$ and is attained as $\ell^h$, where $h$ can be any fundamental weight:
\begin{gather*}
\ell_{{\bf F}_4}=8=\ell^{\varpi_j} \;,\;\; j=1,2,3,4.
\end{gather*}
\end{prop}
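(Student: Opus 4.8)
The plan is to reduce, via Corollary \ref{Coro ellDeltaDual}, the whole computation to the four numbers $\ell^{\varpi_1},\dots,\ell^{\varpi_4}$, and further, by Lemma \ref{Lemma lminush min bj}(ii),(iv), to the symmetric $4\times4$ array of values $\ell^-_{\varpi_j}(\varpi_k)=\ell^-_{\varpi_k}(\varpi_j)$: then $\ell_{{\bf F}_4}$ is the least entry of the array, while $\ell^{\varpi_j}$ is the least entry of its $j$-th row. The first step is to exploit that two of the four fundamental weights of ${\bf F}_4$ are roots, namely the highest short root (of height $8$) and the highest long root; in the present numbering these are the extreme nodes $\varpi_1$ and $\varpi_4$. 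For them Example \ref{Exa shortroot} applies directly and gives $\ell^{\varpi_1}={\rm hi}(\varpi_1)=8$, as well as $\ell^{\varpi_4}=\check{\rm hi}(\varpi_4^\vee)=8$, the latter because $\varpi_4^\vee$ is the (unique dominant, hence highest) short root of the self-dual system ${\bf F}_4^\vee\cong{\bf F}_4$, again of height $8$. This already yields $\ell_{{\bf F}_4}\le8$, together with the explicit minimizing element furnished by Example \ref{Exa shortroot}, e.g.\ $w=r_1r_2r_3r_2r_4r_3r_2r_1$ sending $\varpi_1$ to $-\alpha_1$.

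The remaining task is to treat the two fundamental weights $\varpi_2,\varpi_3$, which are not roots, and to prove $\ell^{\varpi_2}=\ell^{\varpi_3}=8$. The upper bounds are easy: for each $j\in\{2,3\}$ one exhibits some $k$ and a length-$8$ element $w$ with $\langle w\varpi_j,\varpi_k\rangle<0$, obtained by walking along the $W$-orbit of $\varpi_j$ from its dominant representative to the first orbit point pairing negatively with $\varpi_k$, in the same style as the proofs for types ${\bf C}_n$ and ${\bf D}_n$. The substantive content is the lower bounds $\ell^{\varpi_2}\ge8$ and $\ell^{\varpi_3}\ge8$ — on which the equality $\ell_{{\bf F}_4}=8$ actually depends — i.e.\ showing that no $w$ with $l(w)\le7$ satisfies $\langle w\varpi_j,\varpi_k\rangle<0$ for any $k$. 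The mechanism would be the one already used in Example \ref{Exa shortroot} and the classical cases: along a reduced expression the pairing $\langle\cdot,\varpi_k\rangle$ of the $w$-trajectory of $\varpi_j$ changes by a bounded integer amount at each simple reflection, while the initial value $\langle\varpi_j,\varpi_k\rangle$ together with the geometry of the ${\bf F}_4$ root lattice forces at least eight reflections before it can turn negative.

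The hard part is precisely this last lower-bound bookkeeping for $\varpi_2$ and $\varpi_3$: ${\bf F}_4$ does not carry a convenient coordinate model making the trajectory analysis as transparent as for the classical series, so I expect the clean route to be a finite verification over $W({\bf F}_4)$ — equivalently, tabulating the whole array $\bigl(\ell^-_{\varpi_j}(\varpi_k)\bigr)_{j,k}$ — carried out by the computer program of \cite{Compute}, which returns $8$ as the minimum entry, attained among others at $\ell^-_{\varpi_1}(\varpi_1)$ with $w=r_1r_2r_3r_2r_4r_3r_2r_1$. Assembling this with the two closed-form computations above gives $\ell_{{\bf F}_4}=\min\{\ell^{\varpi_1},\ell^{\varpi_2},\ell^{\varpi_3},\ell^{\varpi_4}\}=8$ and the stronger statement that this common value is attained at every fundamental weight.
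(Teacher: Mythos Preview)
Your proposal is correct and follows essentially the same route as the paper: reduce via Corollary~\ref{Coro ellDeltaDual} and Lemma~\ref{Lemma lminush min bj} to the $4\times4$ array $\ell^-_{\varpi_j}(\varpi_k)$, then tabulate and verify by the program \cite{Compute}. The one addition you make---invoking Example~\ref{Exa shortroot} to obtain $\ell^{\varpi_1}=\ell^{\varpi_4}=8$ directly, since these two fundamental weights are the dominant roots of ${\bf F}_4$---is a nice shortcut the paper does not exploit (it simply presents the full table with witnessing Weyl elements), but as you correctly acknowledge, the lower bounds $\ell^{\varpi_2},\ell^{\varpi_3}\ge8$ still require the finite check, so the substance of both arguments is the same computer verification.
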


\begin{proof}
The result is deduced from the following table, where the box corresponding to a given pair $\lambda,h$ contains the number $l=\ell^-_h(\lambda)$ and, in some cases, a sequence $j_1...j_l$, encoding a minimal expression $w=r_{j_1}...r_{j_l}$ for a Weyl group element such that $\langle w\lambda,h\rangle<0$. The numbers $l$, where no $w$ is given are deduced from those with $w$ using the property $\ell^-_h(\lambda)=\ell^-_\lambda(h)$.\\

\begin{tabular}{|l|c|c|c|c|c|}
\hline
$\lambda\setminus h$ & $\varpi_1$ & $\varpi_2$ & $\varpi_3$ & $\varpi_4$ & $\rho$ \\
\hline
$\varpi_1$ & $8$ & $8$ & $9$ & $10$ & $8$ \\
& $_{12342321}$ & $_{23124321}$ & $_{321324321}$ & $_{4321324321}$ & $_{12324321}$ \\
\hline
$\varpi_2$ & $8$ & $10$ & $10$ & $9$ & $11$ \\
  & & $_{2342132312}$ & $_{3213432132}$ & $_{432132432}$ & $_{12342312312}$ \\
\hline
$\varpi_3$ & $9$ & $10$ & $10$ & $8$ & $10$ \\
  & & & $_{3231234323}$ & $_{43213243}$ & $_{1234321323}$ \\
\hline
$\varpi_4$ & $10$ & $9$ & $8$ & $8$ & $8$ \\
  & & & & $_{43213234}$ & $_{43213234}$ \\
\hline
$\rho$ & $8$ & $11$ & $10$ & $8$ & $11$ \\
  & & & & & $_{12321432132}$ \\
\hline
\end{tabular}

The results are verified by a computer program, \cite{Compute}.
\end{proof}

\subsection{Type ${\bf E}_6$}\label{Sect E6}

Here we let $\Delta$ have type ${\bf E}_6$, again with the standard notation of \cite{Bourbaki-Lie-2}. We have $w_0\ne-1$ and the duality between fundamental weight is given by $\varpi_1^*=\varpi_6$, $\varpi_2^*=\varpi_2$, $\varpi_3^*=\varpi_5$, $\varpi_4^*=\varpi_4$. Hence $\Lambda^+_{\rm sd}$ is generated by $\varpi_2,\varpi_4,\varpi_1+\varpi_6,\varpi_3+\varpi_5$.

\begin{prop}\label{Prop E6 ell and LSD}
For a root system $\Delta$ of type ${\bf E}_6$ the values of $\ell_\Delta$ and $\ell_\Delta^{\rm sd}$ are $5$ and $9$, respectively, attained as follows:
\begin{align*}
\ell_{{\bf E}_6}=&5=\ell^{\varpi_1}=\ell^-_{\varpi_6}(\varpi_1)=l(r_6r_5r_4r_3r_1) \;,\\
\ell_{{\bf E}_6}^{\rm sd}=&9=\ell^{\rho}=\ell^-_{\rho}(\varpi_1)=l(r_1r_3r_4r_2r_6r_5r_4r_3r_1) \;.\\
\end{align*}
\end{prop}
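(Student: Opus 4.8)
The plan is to compute $\ell^h$ for a generating set of dominant weights $h$, then take the minimum to obtain $\ell_{{\bf E}_6}$, and separately the minimum over the self-dual generators to obtain $\ell_{{\bf E}_6}^{\rm sd}$. By Corollary \ref{Coro ellDeltaDual}(i), it suffices to know the values $\ell^{\varpi_j}$ for the six fundamental weights; and by Lemma \ref{Lemma lminush min bj}(iv), each $\ell^{\varpi_j}$ is the minimum of $\ell^-_{\varpi_j}(\varpi_k)$ over $k=1,\dots,6$. So the core of the proof is to assemble the full $6\times 6$ table of values $\ell^-_{\varpi_j}(\varpi_k)$ (symmetric by Lemma \ref{Lemma lminush min bj}(ii)), together with an explicit minimal Weyl group element $w$ witnessing $\langle w\varpi_k,\varpi_j\rangle<0$ in each case. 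I would present this as a table exactly in the style of the type ${\bf F}_4$ proof (Proposition \ref{Prop F4 ell}), also including the row and column for $\rho=\varpi_1+\dots+\varpi_6$ since $\rho$ is needed for the self-dual computation.

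For $\ell_{{\bf E}_6}$: the smallest entry in the table should be $\ell^-_{\varpi_1}(\varpi_6)=\ell^-_{\varpi_6}(\varpi_1)=5$, realized by $w=r_6r_5r_4r_3r_1$ (equivalently $r_1r_3r_4r_5r_6$ for the other order), and one must check that no $\ell^-_{\varpi_j}(\varpi_k)$ is smaller. A conceptual shortcut for the lower bound $\ell^{\varpi_j}\geq 5$ is available for $\varpi_2$ and $\varpi_4$ via Example \ref{Exa shortroot}: these are not roots, but one can bound $\ell^-_h(\lambda)$ below using the fact, already invoked in the proof of Proposition \ref{Prop An ell hPrince}, that each simple reflection changes $\langle w\varpi_j,h\rangle$ by $0$ or $\pm 2$, combined with the values $\langle\varpi_j,\varpi_k\rangle$ — so $\ell^-_{\varpi_j}(\varpi_k)\geq \tfrac12\langle\varpi_j,\varpi_k\rangle$. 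Checking that this crude bound, refined by the constraint that the final reflection must flip the sign, already gives $\geq 5$ for the non-self-dual pairs with small height and $\geq 5$ (in fact larger) for all others is the bulk of the bookkeeping; for the cases where it is tight or nearly tight one exhibits the explicit $w$. The computer verification cited as \cite{Compute} backs up all entries.

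For $\ell_{{\bf E}_6}^{\rm sd}$: since $\Lambda^+_{\rm sd}$ is generated by $\varpi_2,\varpi_4,\varpi_1+\varpi_6,\varpi_3+\varpi_5$, by the last displayed formula in Corollary \ref{Coro ellDeltaDual} we need $\min$ over these four generators $h$ of $\min_j \ell^-_h(\varpi_j)$. For the two fundamental generators $\varpi_2,\varpi_4$ we read off $\ell^{\varpi_2}$ and $\ell^{\varpi_4}$ from the table (both should be $\geq 9$). For $h=\varpi_1+\varpi_6$ and $h=\varpi_3+\varpi_5$ one uses Lemma \ref{Lemma lminush min bj}(iii) to get $\ell^-_h(\lambda)\geq\min\{\ell^-_{\varpi_1}(\lambda),\ell^-_{\varpi_6}(\lambda)\}$ etc., which forces these values to be $\geq 5$ but one needs the sharper claim that they are $\geq 9$ — this requires a direct argument, estimating $\langle w\lambda, \varpi_1+\varpi_6\rangle$, analogous to the $\rho$-computation in Propositions \ref{Prop An ell hPrince} and \ref{Prop Dn ell hPrince}; the minimum value $9$ itself is attained at $\ell^-_\rho(\varpi_1)=9$ with $w=r_1r_3r_4r_2r_6r_5r_4r_3r_1$ (note $\rho$, while not one of the four monoid generators, dominates them, so $\ell^-_\rho(\varpi_j)\geq\ell^-_h(\varpi_j)$ for each self-dual generator $h$, giving the upper bound $\ell_{{\bf E}_6}^{\rm sd}\leq 9$ once one checks $\ell^-_\rho(\varpi_1)=9$, and the lower bound comes from the generator-by-generator estimates). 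The main obstacle is verifying the lower bounds $\ell^-_{\varpi_j}(\varpi_k)\geq 5$ (resp. $\geq 9$ for self-dual $h$) uniformly across all pairs without simply deferring to the computer: the scalar-product bound is not always tight, so a handful of cases need a careful trajectory argument showing that flipping the relevant sign genuinely costs the claimed length, and organizing this cleanly is the delicate part.
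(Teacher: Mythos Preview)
Your overall plan---assemble the table of values $\ell^-_{\varpi_j}(\varpi_k)$, augment it with rows for $\rho$ and the self-dual generators $\varpi_1+\varpi_6$, $\varpi_3+\varpi_5$, and verify by the program \cite{Compute}---is exactly what the paper does. The paper's proof is literally the table plus the sentence ``The results are verified by a computer program''; it offers no conceptual shortcuts.

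Two of your proposed shortcuts, however, do not work as stated. First, the step-size bound ``each simple reflection changes $\langle w\varpi_j,h\rangle$ by $0$ or $\pm2$'' from Proposition \ref{Prop An ell hPrince} relies on the fact that in type ${\bf A}$ every fundamental weight is minuscule, so $\langle w\varpi_j,\alpha_k^\vee\rangle\in\{-1,0,1\}$. In ${\bf E}_6$ only $\varpi_1,\varpi_6$ are minuscule; for $\lambda\in\{\varpi_2,\varpi_3,\varpi_4,\varpi_5\}$ the quantity $\langle w\lambda,\alpha_k^\vee\rangle$ is unbounded, so the derived inequality $\ell^-_{\varpi_j}(\varpi_k)\geq\tfrac12\langle\varpi_j,\varpi_k\rangle$ is not available and cannot do the work you assign it. Second, the claim ``$\rho$ dominates the generators, so $\ell^-_\rho(\varpi_j)\geq\ell^-_h(\varpi_j)$'' is not a consequence of Lemma \ref{Lemma lminush min bj}(iii), which only gives $\ell^-_{h+h'}(\lambda)\geq\min\{\ell^-_h(\lambda),\ell^-_{h'}(\lambda)\}$; there is no monotonicity of $\ell^-_h$ in $h$. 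The upper bound $\ell^{\rm sd}_{{\bf E}_6}\leq 9$ follows simply from $\rho\in\Lambda^+_{\rm sd}$ and $\ell^\rho\leq\ell^-_\rho(\varpi_1)=9$, with no domination argument needed. For the lower bound the paper, like you, reduces to the four generators $\varpi_2,\varpi_4,\varpi_1+\varpi_6,\varpi_3+\varpi_5$ and reads off $\ell^h\geq 9$ for each from the table (the empty boxes being $\geq 11$), all certified by \cite{Compute}.
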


\begin{proof}
The result is deduced from the following table, where the box corresponding to a given pair $\lambda,h$ contains the number $l=\ell^-_h(\lambda)$ and, in some cases, a sequence $j_1...j_l$, encoding a minimal expression $w=r_{j_1}...r_{j_l}$ for a Weyl group element such that $\langle w\lambda,h\rangle<0$. The numbers $l$, where no $w$ is given are deduced from those with $w$ using the property $\ell^-_h(\lambda)=\ell^-_\lambda(h)$ and the symmetries of the root system. In the empty boxed the value of $l$ is at least $11$.\\

\begin{tabular}{|l|c|c|c|c|c|c|c|}
\hline
$\lambda\setminus h$ & $\varpi_1$ & $\varpi_2$ & $\varpi_3$ & $\varpi_4$ & $\varpi_5$ & $\varpi_6$ & $\rho$ \\
\hline
$\varpi_1$ & $8$ & $11$ & $7$ & $10$ & $8$ & $5$ & $9$ \\
  & ${_{13452431}}$ & ${_{24354265431}}$ & ${_{3425431}}$ & ${_{4354265431}}$ & ${_{54265431}}$ & ${_{65431}}$ & ${_{134265431}}$ \\
\hline
$\varpi_2$ & $11$ & $11$ & $11$ & $11$ & $11$ & $11$ & $11$ \\
  &  & ${_{24315436542}}$ & ${_{31425436542}}$ & ${_{42315436542}}$ & & & \\
\hline
$\varpi_3$ & $7$ & $11$ & & & $10$ & $8$ & \\
  &  & & & & ${_{5423165143}}$ & & \\
\hline
$\varpi_4$ & $10$ & $11$ & & & & $10$ & \\
\hline
$\varpi_5$ & $8$ & $11$ & $10$ & & & $7$ & \\
\hline
$\varpi_6$ & $5$ & $11$ & $9$ & $10$ & $7$ & $8$ & $9$ \\
\hline
$\rho$ & $9$ & $11$ & & & & $9$ & \\
  & ${_{134254316}}$ & & & & & ${_{654231435}}$ & \\
\hline
$\varpi_1+\varpi_6$ & $9$ & & & & & $9$ & \\
  & ${_{134254316}}$ & & & & & ${_{654321456}}$ & \\
\hline
$\varpi_3+\varpi_5$ & $9$ & & & & & $9$ & \\
  & ${_{134254365}}$ & & & & & ${_{654231435}}$ & \\
\hline
\end{tabular}
%%%%%%%%%%%%%%%%%%%%%%%%%%%%%% An extra example:
% $\frac32\varpi_1+3\varpi_6$ \footnote{Whatever. Wrong proportions.} & $5$ & $11$ & $8$ & $10$ & $8$ & $8$ & \\
%  & ${_{13456}}$ & ${_{24354123456}}$ & ${_{34123456}}$ & ${_{4354123456}}$ & ${_{54321456}}$ & ${_{65432456}}$ & \\
% \hline

The results are verified by a computer program, \cite{Compute}.
\end{proof}

\subsection{Type ${\bf E}_7$}\label{Sect E7}

Let $\Delta$ have type ${\bf E}_7$, with the simple roots ordered as in \cite{Bourbaki-Lie-2}.

\begin{prop}\label{Prop E7 ell}
Let $\Delta$ be a root system of type ${\bf E}_7$. Then $\ell_\Delta=10$ and the value is attained for the pair $\lambda=h=\varpi_7$ for the Weyl group element $r_7r_6r_5r_4r_2r_3r_4r_5r_6r_7$.
\end{prop}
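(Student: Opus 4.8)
The strategy mirrors the treatment of types ${\bf F}_4$ and ${\bf E}_6$ in Propositions \ref{Prop F4 ell} and \ref{Prop E6 ell and LSD}. Since $w_0=-1$ in type ${\bf E}_7$, every dominant weight is self-dual and $\ell_\Delta=\ell_\Delta^{\rm sd}$, so by Corollary \ref{Coro ellDeltaDual} it suffices to determine $\ell_\Delta=\min\{\ell^{\varpi_j}:1\le j\le 7\}=\min\{\ell^-_{\varpi_j}(\varpi_k):1\le j,k\le 7\}$, a quantity symmetric in $j,k$ by Lemma \ref{Lemma lminush min bj}(ii). The plan is to display the symmetric $7\times 7$ table of the values $\ell^-_{\varpi_j}(\varpi_k)$, each entry accompanied by a minimal reduced word $w$ realizing $\langle w\varpi_j,\varpi_k\rangle<0$, and to read off that the minimum equals $10$ and is attained at the diagonal entry $(7,7)$; the remaining arithmetic is checked with the computer program of \cite{Compute}, exactly as for ${\bf F}_4$ and ${\bf E}_6$.

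For the upper bound I would compute $w\varpi_7$ for $w=r_7r_6r_5r_4r_2r_3r_4r_5r_6r_7$ by applying the ten reflections to $\varpi_7$ one at a time. Because $\varpi_7$ is minuscule, the pairing of the current weight with each successive simple coroot equals $+1$, so every reflection lowers the height by exactly one; one finds $\varpi_7-w\varpi_7=\alpha_2+\alpha_3+2\alpha_4+2\alpha_5+2\alpha_6+2\alpha_7$, of height $10$. This shows simultaneously that $l(w)=10$ (the word is reduced), and, since $\langle\alpha_i,\varpi_7\rangle$ is a positive multiple of $\delta_{i7}$ while $\alpha_7$ occurs in $\varpi_7$ with coefficient $\tfrac32<2$, that $\langle w\varpi_7,\varpi_7\rangle<0$. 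Hence $\ell^-_{\varpi_7}(\varpi_7)\le 10$ and $\ell_\Delta\le 10$.

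The real content, and the step I expect to be the main obstacle, is the lower bound $\ell^-_{\varpi_j}(\varpi_k)\ge 10$ for all $j,k$. Two cases are transparent: $\varpi_1$ is the highest root of ${\bf E}_7$, so Example \ref{Exa shortroot}(i) gives $\ell^{\varpi_1}={\rm hi}(\varpi_1)=17$; and for $\varpi_7$ one can argue entirely inside the minuscule weight poset. Indeed, if $\mu=w\varpi_7$ has $\langle\mu,\varpi_7\rangle<0$ then, writing $\varpi_7-\mu=\sum c_i\alpha_i$, one needs $c_7\ge 2$; tracking a saturated chain from $\varpi_7$ down to $\mu$, the coefficient relation at a repeated removal of $\alpha_7$ (resp. $\alpha_6$, $\alpha_5$, $\alpha_4$) forces $c_6\ge 2$, then $c_5\ge 2$, then $c_4\ge 2$, and finally $c_2+c_3\ge 2$, whence ${\rm hi}(\varpi_7-\mu)=\sum_i c_i\ge 10$, and since applying a reflection to a minuscule weight changes its height by at most one, $l(w)\ge{\rm hi}(\varpi_7-\mu)\ge 10$. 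For the remaining weights $\varpi_2,\dots,\varpi_6$ I do not see a comparably short argument, and would rely on the tabulated (computer-verified) values; the only point requiring care is the identification of $h$ with an element of $\Lambda_\RR\cong\Gamma_\RR$ via the Killing form (Remark \ref{Rem h in Lambda}), which is harmless here because $\Delta$ is simply laced and self-dual, so no rescaling of the fundamental weights intervenes.
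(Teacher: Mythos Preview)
Your proposal is correct and aligns with the paper's own argument, which consists of a single sentence deferring the entire verification to the computer program \cite{Compute}. You go somewhat further: you supply by hand the upper bound at $(\varpi_7,\varpi_7)$, invoke Example \ref{Exa shortroot} for $\ell^{\varpi_1}=17$, and sketch a minuscule-poset argument for $\ell^-_{\varpi_7}(\varpi_7)\ge 10$, leaving only the pairs involving $\varpi_2,\dots,\varpi_6$ to the machine---exactly the residue the paper itself leaves to \cite{Compute}.

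One small caution on your saturated-chain argument: the intermediate implications ``$c_6\ge2\Rightarrow c_5\ge2\Rightarrow c_4\ge2$'' are not automatic from the endpoint constraints $\langle\mu,\alpha_i^\vee\rangle\in\{-1,0,1\}$ alone (there exist lattice points with $c_7=2$, $c_5=1$ satisfying all these inequalities that are \emph{not} in $W\varpi_7$). Your phrasing via the chain is the right fix, but the nesting of intervals (the $\alpha_6$-removals lying between the $\alpha_7$-removals, etc.) should be made explicit. A cleaner route to the same bound: the level $c_7=2$ in the $56$-weight poset is a $27$ of $E_6$, and by the self-duality $\mu\leftrightarrow-\mu$ its top element sits at depth $27-\bigl(l(w_0^{E_6})-l(w_0^{D_5})+1\bigr)=27-17=10$, giving $\ell^-_{\varpi_7}(\varpi_7)=10$ directly.
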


The result is verified using the computer program \cite{Compute}.\\

Let us also note that $\varpi_1$ is the highest root of ${\bf E}_7$, its height is $17$, and, from Example \ref{Exa shortroot} we deduce
$$
\ell^{\varpi_1}=17 \;.
$$
The value is attained as $\ell^-_{\varpi_1}(\varpi_j)$ each $j$ and a suitable Weyl group element.

\subsection{Type ${\bf E}_8$}\label{Sect E8}

Let $\Delta$ be a root system of type ${\bf E}_8$. We have unfortunately been unable to compute the exact value of $\ell_{{\bf E}_8}$. However, we supply lower and upper bounds. Again we follow the numbering of \cite{Bourbaki-Lie-2}.

\begin{prop}\label{Prop E8 ell bounds}
The following inequalities hold:
$$
7 = \ell_{{\bf D}_{8}}\leq \ell_{{\bf E}_8} \leq \ell^{\varpi_8} = 29 \;.
$$
\end{prop}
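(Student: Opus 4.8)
The plan is to establish the two bounds separately, using rather different ideas. For the upper bound $\ell_{{\bf E}_8}\leq 29$, by Corollary \ref{Coro ellDeltaDual},(i) it suffices to exhibit some fundamental weight $\varpi_j$ with $\ell^{\varpi_j}\leq 29$; we claim this holds for $\varpi_8$ with equality $\ell^{\varpi_8}=29$. Since ${\bf E}_8$ is simply laced and $\varpi_8$ is the highest root (its height being $29$), this is an instance of Example \ref{Exa shortroot},(i): for a dominant root $\beta$ in a simply laced system one has $\ell^\beta={\rm hi}(\beta)$, because the shortest Weyl group element carrying $\beta$ to a negative root has length exactly ${\rm hi}(\beta)$. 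Thus $\ell^{\varpi_8}={\rm hi}(\varpi_8)=29$, and the upper bound follows. One should double-check that the value ${\rm hi}(\varpi_8)=29$ is indeed the height of the highest root of ${\bf E}_8$ (equivalently, $h-1$ where $h=30$ is the Coxeter number), which is standard, cf. \cite{Bourbaki-Lie-2}.

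For the lower bound $\ell_{{\bf D}_8}\leq\ell_{{\bf E}_8}$, the strategy is to use the inclusion ${\bf D}_8\subset{\bf E}_8$ of root systems: one obtains ${\bf D}_8$ as the subsystem of ${\bf E}_8$ spanned by a suitable set of eight roots (for instance, deleting the appropriate node of the affine Dynkin diagram, or realizing ${\bf D}_8$ as the sublattice of vectors with integer coordinates inside the ${\bf E}_8$ lattice). Let $\Delta'\cong{\bf D}_8$ be such a subsystem, with Weyl group $W'\subset W$ and a compatible choice of positive systems so that $\Delta'^+\subset\Delta^+$ and ${\rm Cone}(\Delta'^+)\subseteq{\rm Cone}(\Delta^+)$. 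The key point is a monotonicity statement: if $\lambda'\in\Lambda'^+$ and $w'\in W'$ satisfies $w'\lambda'\notin{\rm Cone}(\Delta^+)$, then a fortiori $w'\lambda'\notin{\rm Cone}(\Delta'^+)$, and moreover the length $l_{\Delta'}(w')$ computed in $W'$ dominates the length $l_\Delta(w')$ computed in $W$ (since $\Delta'^+\subseteq\Delta^+$ forces $\#(\Delta'^+\cap w'^{-1}\Delta'^-)\geq\#(\Delta'^+\cap w'^{-1}\Delta^-)$, which is at most $l_\Delta(w')$). Running this through the definition $\ell_\Delta=\min\{l(w):w\Lambda^+\nsubseteq{\rm Cone}(\Delta^+)\}$ with a minimizing pair for ${\bf E}_8$ restricted appropriately — or rather, taking a minimizing configuration $(\lambda,w)$ for ${\bf D}_8$ and viewing it inside ${\bf E}_8$ — yields $\ell_{{\bf D}_8}\leq\ell_{{\bf E}_8}$. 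Then invoke Theorem \ref{Theo Values ellDelta} (or Proposition \ref{Prop Dn ell fundwei},(iv) with $n=8$) to get $\ell_{{\bf D}_8}=n-1=7$.

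The main obstacle is making the comparison of lengths across the subsystem inclusion fully rigorous, in particular the direction of the inequality and the need to pass from a minimizing datum on one side to an admissible datum on the other. One has to be careful that an element realizing $\ell_{{\bf D}_8}$, i.e. a weight $\lambda'$ of ${\bf D}_8$ and an element $w'\in W'$ with $w'\lambda'\notin{\rm Cone}(\Delta'^+)$, actually has the property that $w'\lambda'\notin{\rm Cone}(\Delta^+)$; this requires knowing that the chosen ${\bf D}_8$ is ``spanning'' in the sense that ${\rm Cone}(\Delta'^+)$ together with the ${\bf E}_8$-specific roots does not accidentally engulf $w'\lambda'$. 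This is where one wants to choose the embedding so that $\Delta'^+ = \Delta^+\cap({\rm span}\,\Delta')$ and exploit that $\lambda'$ lies in the span of $\Delta'$; a brief linear-algebra argument handles it. A cleaner alternative, if the preceding machinery cooperates, is to argue directly on fundamental weights: locate a fundamental weight of ${\bf E}_8$ that restricts (up to the identification of chambers) to the fundamental weight $\varpi_1$ of the ${\bf D}_8$-subsystem at which $\ell_{{\bf D}_8}=7$ is attained, and transport the length-$7$ Weyl element accordingly. I expect the honest write-up to be short but to require care in this subsystem-comparison step; the upper bound is essentially immediate from Example \ref{Exa shortroot}.
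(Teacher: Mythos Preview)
Your upper bound via Example \ref{Exa shortroot} is correct and matches the paper's proof exactly.

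For the lower bound you follow the same strategy as the paper---the subsystem ${\bf D}_8\subset{\bf E}_8$---but your length comparison is stated in the wrong direction. You claim $l_{\Delta'}(w')\geq l_\Delta(w')$ for $w'\in W'$, justified by $\#(\Delta'^+\cap w'^{-1}\Delta'^-)\geq\#(\Delta'^+\cap w'^{-1}\Delta^-)$; but $\Delta'^-\subset\Delta^-$ gives the \emph{reverse} set inclusion, and in fact one has $l_{\Delta'}(w')\leq l_\Delta(w')$ (with equality on those first two sets, since $w'$ preserves $\Delta'$ and $\Delta'^+=\Delta'\cap\Delta^+$). With the inequality in this corrected direction, your proposed reduction collapses: starting from a ${\bf D}_8$-minimizing pair $(\lambda',w')$ and ``viewing it inside ${\bf E}_8$'' only tells you that $w'$ has ${\bf E}_8$-length $\geq 7$, which says nothing about $\ell_{{\bf E}_8}$; and starting from an ${\bf E}_8$-minimizer fails because that element need not lie in $W_{{\bf D}_8}$ at all. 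Your hedging between the two directions in the sketch reflects this: neither closes.

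The paper's proof is also terse at this step---it records only the Weyl chamber inclusion $\Lambda_\RR^+({\bf E}_8)\subset\Lambda_\RR^+({\bf D}_8)$ and asserts that the inequality follows, without addressing the passage between the two Weyl groups and their distinct length functions. So your instinct that this comparison is the ``main obstacle'' is well placed; but the explicit mechanism you propose is incorrect, and a working argument has to confront the fact that the simple reflections of the ${\bf D}_8$ subsystem are not simple in ${\bf E}_8$.
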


\begin{proof}
The upper bound is deduced from Example \ref{Exa shortroot}, since $\varpi_8$ is the highest root and $29$ is its height. It can also be verified that this length is attained as $\ell^-_{\varpi_8}(\lambda)$ for every nonzero $\lambda\in\Lambda^+$ and a suitable Weyl group element $w$. These $w$ are distinct for the distinct fundamental weights $\varpi_j$, say $w_j$, but each $w_j$ suits all strictly dominant $\lambda$.

The lower bound is deduced from the fact that ${\bf E}_8$ contains a subsystem of type ${\bf D}_8$, and the Weyl chambers can be chosen so that $\Lambda_{\RR}^+({\bf E}_8)\subset \Lambda_{\RR}^+({\bf D}_8)$. This implies the inequality $\ell_{{\bf D}_8}\leq \ell_{{\bf E}_8}$. The value $\ell_{{\bf D}_8}=7$ is obtained from \ref{Prop Dn ell and LSD}.
\end{proof}

\bibliographystyle{plain}

\small{

}

\vspace{0.4cm}

\begin{tabular}{ll}
Valdemar V. Tsanov & Yana Staneva \\
Ruhr-Universit\"at Bochum & University of Cologne \\
Fakult\"at f\"ur Mathematik, IB 3/101 $\qquad\qquad\quad$ & Mathematical Institute, Room 1.19\\ 
D-44780 Bochum, Germany. & 50931 Cologne, Germany.\\
Email: Valdemar.Tsanov@rub.de & Email: ystaneva@math.uni-koeln.de \\
\end{tabular}

\end{document}